\numberwithin{equation}{section}
\newtheoremstyle{thmstyle}
  {3pt}{3pt}{\itshape}{}{\bfseries}{}{.5em}
  {\thmname{#1}\thmnumber{ #2}\thmnote{ \textmd{(#3)}}}
\theoremstyle{thmstyle}
\newtheorem{theorem}{Theorem}[section]
\newtheorem{lemma}[theorem]{Lemma}
\newtheorem{coro}[theorem]{Corollary}
\newtheorem{prop}[theorem]{Proposition}
\newtheorem{definition}[theorem]{Definition}
\newtheorem{remark}[theorem]{Remark}
\newtheorem{example}[theorem]{Example}
\title{ title }
\author{ author }
\newcommand{\R}{\mathbb{R}}
\newcommand{\Rn}{\mathbb{R}^n}
\newcommand{\Rm}{\mathbb{R}^m}
\newcommand{\N}{\mathbb{N}}
\newcommand{\acspace}{AC_{L^p}([a,b],N)}
\newcommand{\acspaceg}{AC_{L^p}([a,b],G)}
\newcommand{\lp}{\mathcal{L}^p}
\newcommand{\funcion}{\Lambda_{\xi,\eta}}
\DeclareMathOperator*{\esssup}{ess\,sup}
\DeclareMathOperator{\Evol}{Evol}
\DeclareMathOperator{\pr}{pr}
\DeclareMathOperator*{\Diff}{Diff^r}
\DeclareMathOperator*{\Dif}{Diff_K^r}
\DeclareMathOperator*{\Difff}{Diff_c^\infty}
\DeclareMathOperator*{\id}{id}
\begin{document}
\begin{center}
\textbf{\Large Manifolds of absolutely continuous functions
with values in}\\[2mm]
\textbf{\Large an infinite-dimensional manifold and regularity }\\[2mm]
\textbf{\Large properties of half-Lie groups }\\[3mm]
\textbf{Matthieu F. Pinaud}
\end{center}

\begin{abstract}
\noindent
For $p\in [1,\infty]$, we define a smooth manifold structure on the set $AC_{L^p}([a,b],N)$ of absolutely continuous functions $\gamma\colon [a,b]\to N$ with $L^p$-derivatives for all real numbers $a<b$ and each smooth manifold $N$ modeled on a sequentially complete locally convex topological vector space, such that $N$ admits a local addition. Smoothness of natural mappings between spaces of absolutely continuous functions is discussed, like superposition operators $AC_{L^p}([a,b],N_1)\to AC_{L^p}([a,b],N_2)$, $\eta\mapsto f\circ \eta$, for a smooth map $f\colon N_1\to N_2$.
Let $G$ be a right half-Lie group. For $1\leq p <\infty$, we show certains properties of $L^p$-semiregularity of $G$ under certains condicions on $G$. For $r\in \mathbb{N}$, we prove that the right half-Lie groups  $\text{Diff}_K^r(\mathbb{R})$ and $\Diff(M)$ are $L^p$-semiregular. Here $K$ is a compact subset of $\mathbb{R}$ and $M$ is a compact smooth manifold (possible with boundary). An $L^p$-semiregular half-Lie group $G$ admits an evolution map $\text{Evol}:L^p([0,1],T_e G)\to AC_{L^p}([0,1],G)$, where $e$ is the neutral element of $G$. For the preceding examples, the evolution map $\text{Evol}$ is continuous.
\end{abstract}
\textbf{MSC 2020 subject classification}: 58D25 (primary); 22E65, 34A12, 46T10, 58D05.\\
\textbf{Keywords}: Infinite-dimensional manifolds, manifold of mappings, absolutely continuous functions,  half-Lie group, regular Lie group, regularity, diffeomorphism group,  evolution, initial value problem.

\section{Introduction}
The study of mapping manifolds is an important branch of geometry and global analysis. For example, for $k\in \N\cup \{0\}$, a well-known case is the manifold of the set of $C^k$-maps $f:M\to N$, denoted by $C^k(M,N)$, for a compact manifold $M$ and a smooth manifold $N$ admitting local addition (see e.g. \cite{ASm, Els, Mor}). In \cite{Kri}, Krikorian shows how to contruct smooth manifold structures for set of functions with H\"older conditions and Sobolev functions. \\ \\
\noindent 
Our main focus will be the study of absolutely continuous functions with values in a manifold $N$ modeled on a sequentially complete locally convex space which admits a local addition. For a Hilbert manifold~$N$,
a smooth manifold structure
on the set $AC_{L^2}([0,1],N)$
of absolutely continuous
paths with $L^2$-derivatives in local charts
was used by Flaschel and Klingenberg
to study closed geodesics
in Riemannian manifolds (see \cite{FKl} and \cite{Krg},
cf.\ also \cite{Sm1}).
For Banach manifolds~$N$ admitting a smooth local
addition
and $p\in [1,\infty]$, Pierron and Trouv\'e show that a smooth manifold structure
on $AC_{L^p}([0,1],N)$ may also be obtained
using a method of Krikoian~\cite{Kri},
which is similar to Palais' use of
Banach section functors~\cite{Pls}.
A Lie group structure (and hence a smooth manifold structure)
on $AC_{L^p}([0,1],G)$
was obtained in \cite{Nin1} for each Lie group~$G$
modeled on a sequentially complete locally convex space
(as in \cite{Mil}),
generalizing the case of Fr\'{e}chet--Lie groups
treated in~\cite{GL2}. \\ \\
\noindent
We construct manifolds of absolutely continuous
functions in higher generality. To formulate
the main result, let us fix notation.

\begin{definition}\label{loc-add}
Let $N$ be a smooth manifold
modeled on real a locally convex space,
with tangent bundle $TN$ and its
bundle projection $\pi_{TN}\colon TN\to N$.
A \emph{local addition} on~$N$
is a map
\[
\Sigma\colon \Omega\to N,
\]
defined on an open set $\Omega\subseteq TN$
which contains the zero-vector $0_p\in T_pN$ for each $p\in N$,
such that $\Sigma(0_p)=p$ for all $p\in N$ and
\[
\theta\colon \Omega\to N\times N,\quad v\mapsto (\pi_{TN}(v),\Sigma(v))
\]
has open image
and is a $C^\infty$-diffeomorphism onto its
image.
\end{definition}
\noindent
Throughout the following, $a<b$ are real numbers
    and $p\in[1,\infty]$.
For a definition of absolutely
continuous functions with values in a sequentially
complete locally convex space~$E$ or a smooth manifold~$N$
defined thereon, the reader is referred to Definitions~\ref{defABE}
and \ref{defACM}, respectively (see also \cite{Nin1}).
For $\eta\in AC_{L^p}([a,b],N)$,
the pointwise operations make
\[
\Gamma_{AC}(\eta):=\{\tau\in AC_{L^p}([a,b],TN)\colon \pi_{TN}\circ \tau=\eta\}
\]
a vector space;
we endow it with a natural topology making it a locally
convex topological vector space (cf.\ Definition~\ref{deftop}).
We shall see that
\[
{\mathcal V}_\eta :=\{\tau\in \Gamma_{AC}(\eta)\colon \tau([a,b])\subseteq \Omega\}
\]
is an open $0$-neighborhood in $\Gamma_{AC}(\eta)$.
Setting
\[
{\mathcal U}_\eta:=\{\gamma\in AC_{L^p}([a,b],N)\colon (\eta (t),\gamma(t))\in \Omega'\;
\mbox{for all $t\in [a,b]$}\},
\]
the map
\[
\Psi_\eta\colon {\mathcal V}_\theta\to {\mathcal U}_\theta,\quad \tau\mapsto\Sigma\circ \tau
\]
is a bijection (see Remark \ref{recharts}). We show (see Theorem~\ref{teomanifold}):
\begin{theorem}\label{main-1}
For each smooth manifold~$N$
modeled on a sequentially complete
locally convex space and $p\in[1,\infty]$,
the set $AC_{L^p}([a,b],N)$
of all $AC_{L^p}$-maps $\gamma\colon [a,b]\to N$
admits a smooth manifold structure
such that for each local addition
$\Sigma\colon \Omega\to N$,
the sets ${\mathcal U}_\eta$ are open in $AC_{L^p}([a,b],N)$
for all $\eta\in AC_{L^p}([a,b],N)$
and $\Psi_\eta\colon {\mathcal V}_\eta\to {\mathcal U}_\eta$
is a $C^\infty$-diffeomorphism.
\end{theorem}
\noindent
Using the smooth manifold structures just described,
we find:
\begin{theorem}\label{intro fon}
Let $f\colon N_1\to N_2$
be a smooth map between smooth manifolds
$N_1$ and $N_2$ modeled on sequentially
complete locally convex spaces such that
$N_1$ and $N_2$ admit a local addition and $p\in [1,\infty]$.
We then have $f\circ\gamma\in AC_{L^p}([a,b], N_2)$
for all $\gamma\in AC_{L^p}([a,b],N_1)$;
the map
\begin{equation*}\label{ac-nonlin}
AC_{L^p}([a,b],f)
\colon AC_{L^p}([a,b],N_1)\to AC_{L^p}([a,b],N_2),\quad \gamma\mapsto f\circ \gamma
\end{equation*}
is smooth.
\end{theorem}
\noindent
More generally,
$AC_{L^p}([a,b],f)$ is $C^r$ for $r\in \N\cup\{0,\infty\}$
whenever $f$ is $C^{r+2}$ (see Proposition~\ref{propfon}).\medskip

\noindent
If, in the situation of Definition~\ref{loc-add},
$N$ is a $\mathbb{K}$-analytic manifold
modeled on a locally convex
topological $\mathbb{K}$-vector space
over $\mathbb{K}\in \{\mathbb{R},\mathbb{C}\}$
and $\theta\colon \Omega\to \Omega'$
is a diffeomorphism of
$\mathbb{K}$-analytic manifolds,
then $\Sigma$ is called a \emph{$\mathbb{K}$-analytic local addition}.
In this case, $AC_{L^p}([a,b],N)$
can be given a $\mathbb{K}$-analytic manifold structure
modeled on the locally convex topological
$\mathbb{K}$-vector spaces $\Gamma_{AC}(\eta)$
with properties as in Theorem~\ref{main-1},
replacing $C^\infty$-diffeomorphisms
with diffeomorphisms of $\mathbb{K}$-analytic
manifolds there (see Corollary \ref{manalytic}).
If $N_j$ is a $\mathbb{K}$-analytic manifold
modeled on a sequentially
complete locally convex topological $\mathbb{K}$-vector
space such that $N_j$
admits a $\mathbb{K}$-analytic local addition
for $j\in \{1,2\}$,
then the map
$AC_{L^p}([a,b],f)$ described in Theorem \ref{intro fon}
is $\mathbb{K}$-analytic for all $p\in [1,\infty]$ (see Corollary \ref{fanalytic}).\medskip

\noindent
If $\pi_{TN}:TN\to N$ denotes the tangent bundle of $N$. Followings results on more general manifolds of mappings (see e.g. \cite{AGS}), we can conclude that the map
\[ AC_{L^p}([a,b],\pi_{TN}):AC_{L^p}([a,b],TN)\to AC_{L^p}([a,b],N),\quad \tau \mapsto \pi_{TN}\circ \tau\]
defines a vector bundle on $\acspace$. The proofs are analogous to the reference given, but we will show them for the sake of completeness. \\
\noindent
Manifolds of absolutely continuous paths
occur in the regularity theory of infinite-dimensional
Lie groups.
Consider a Lie group $G$ modeled on a sequentially
complete locally convex space, with
tangent space ${\mathfrak g}:=T_eG$
at the neutral element $e\in G$.
For $g\in G$, let $\rho_g\colon G\to G$, $x\mapsto xg$ be the right
multiplication with~$g$. Then
\[
TG\times G\to TG,\quad (v,g)\mapsto T\rho_g(v)=:v.g
\]
is a smooth map and a right action of $G$ on $TG$.
The following concept strengthens
``regularity'' in the sense of Milnor~\cite{Mil}.
\begin{definition}
Following \cite{Nin1} (cf.\ also \cite{GL2}),
$G$ is called \emph{$L^p$-semiregular}
if, for each $\gamma\in {\mathcal L}^p([0,1],\mathfrak{g})$, the initial value problem
\begin{align}\label{ode-analog}
\dot{\eta}(t)&=\gamma(t).\eta(t), \quad t\in [0,1]\\ 
\eta(0)&=e
\end{align}
has an
$AC_{L^p}$-solution $\eta\colon [0,1]\to G$.
Then $\eta$ is necessarily unique;
we call $\eta$ the \emph{evolution} of~$\gamma$
and write $\Evol([\gamma]):=\eta$.
If $G$ is $L^p$-semiregular and $\Evol\colon L^p([0,1],\mathfrak{g})
\to AC_{L^p}([0,1],G)$
is smooth, then $G$ is called \emph{$L^p$-regular}.
\end{definition}
\begin{remark}
If $G$ is modeled on a Fr\'{e}chet
space, (\ref{ode-analog})
is required to hold for almost all $t\in [0,1]$
with respect to Lebesgue measure.
In the general case, $\eta$ is required to
be a Carath\'{e}odory solution to~(\ref{ode-analog}),
i.e., it solves the corresponding integral equation
piecewise in local charts.
We mention that a Lie group $G$
is $L^p$-regular if and only
if $G$ is $L^p$-semiregular and $\Evol$
is smooth as map
\[
L^p([0,1],\mathfrak{g})\to C([0,1],G)
\]
(see \cite{Nin1}).
The latter holds whenever
$\Evol\colon L^p([0,1],\mathfrak{g})\to C([0,1],G)$
is continuous at~$0$ (see \cite[Theorem~E]{GL3}).
\end{remark}

\noindent
Now consider a half-Lie group~$G$
modeled on a sequentially complete
locally convex space~$E$.
Thus~$G$ is a group, endowed
with a smooth manifold structure modeled
on~$E$ which makes $G$
a topological group and turns
the right translation $\rho_g\colon G\to G$
into a smooth mapping for each $g\in G$
(cf.\ \cite{BHM,MNb}).
Let us use notation as in the case of Lie groups.
\begin{definition}
We say that a half-Lie group $G$ is \emph{$L^p$-semiregular}
if the differential equation
\begin{equation}
    \dot{y}(t)=\gamma(t).y(t),\quad t\in [0,1]
\end{equation}
satisfies local uniqueness of Carath\'{e}odory
solutions for each $\gamma\in {\mathcal L}^p([0,1],\mathfrak{g})$
(in the sense of \cite{GHi}) and the initial
value problem (\ref{ode-analog})
has a Carath\'{e}odory solution $\Evol(\gamma):=\eta\colon [0,1]\to G$.
\end{definition}
\noindent
The Lie group $\text{Diff}(M)$
of $C^\infty$-diffeomorphisms of a compact smooth
manifold~$M$ without boundary
is known to be $L^1$-regular,
and also the Lie group $\text{Diff}_K(\mathbb{R}^n)$
of all $C^\infty$-diffeomorphisms
$\phi\colon \mathbb{R}^n\to \mathbb{R}^n$
such that $\phi(x)=x$ for all $x\in \mathbb{R}^n\setminus K$,
for each compact subset $K\subseteq \mathbb{R}^n$
(see \cite{GL2}). For each positive integer~$r$,
the following analogues are obtained
(see Theorems ~\ref{text-main-4}
and \ref{text-main-3}):
\begin{theorem}\label{main-3}
Let $1\leq p <\infty$. For each compact smooth manifold~$M$
without boundary and $r\in \N$, 
the half-Lie group $G:=\mbox{\emph{Diff}}^r(M)$
of all $C^r$-diffeomorphisms $\phi\colon M\to M$
is $L^p$-semiregular.
Moreover, its evolution map 
\[
\mbox{\emph{Evol}}\colon L^p([0,1],\mathfrak{g})\to AC_{L^p}([0,1],G)
\]
is continuous.
\end{theorem}
\noindent
Here $\mathfrak{g}$ is the Banach space of
$C^r$-vector fields on~$M$.
\begin{theorem}\label{main-4}
Let $1\leq p <\infty$ and $r\in \N$. For each positive integer~$n$
and compact subset $K$ of $\mathbb{R}^n$,
the half-Lie group $G:=\mbox{\emph{Diff}}^r_K(\mathbb{R}^n)$
of all $C^r$-diffeomorphisms $\phi\colon \mathbb{R}^n\to \mathbb{R}^n$
with \emph{$\phi|_{\mathbb{R}^n\setminus K}=\text{id}_{\mathbb{R}^n\setminus K}$}
is $L^p$-semiregular. Moreover, its evolution map 
\[
\mbox{\emph{Evol}}\colon L^p([0,1],\mathfrak{g})\to AC_{L^p}([0,1],G)
\]
is continuous.\\ 
If we replace $L^p$ with $L_{rc}^\infty$ (see Definition \ref{Linfinito}) the preceding theorem remains valid.
\end{theorem}
\noindent
Here $\mathfrak{g}$ is the Banach space of
all $C^r$-vector fields on~$\mathbb{R}^n$
which vanish outside~$K$.\medskip

\noindent
For an $L^p$-semiregular
half-Lie group $G$ admitting a local addition with $1\leq p <\infty$,
the smooth manifold structure on $AC_{L^p}([0,1],G)$
provided by Theorem~\ref{main-1}
makes it possible to discuss
continuity properties and differentiability
properties of the evolution map as a mapping
\[
\Evol\colon L^p([0,1],\mathfrak{g})\to AC_{L^p}([0,1],G).
\]
So far, we have one positive result in this regard:

\begin{theorem}
    Let $G$ be a right half-Lie group modeled in a sequentially complete locally convex space space $E$ which admits a local addition and $1\leq p < \infty$. Let $G$ be $L^p$-semiregular with continuous evolution map      
    \[\Evol_C:L^p([0,1],T_eG),\to C([0,1],G),\quad \gamma\mapsto \Evol_C(\gamma).\]
    If the restriction of the right action
    \[ \tau : T_eG\times G \to TG,\quad (v,g)\mapsto v.g \]
    is continuous, then the evolution map
    \[ \Evol:L^p([0,1],T_eG),\to AC_{L^p}([0,1],G),\quad \gamma\mapsto \Evol(\gamma)\]
    is continuous. If $G$ is a right half-Lie group modeled in an integral complete locally convex space $E$, then if we replace $L^p$ with $L_{rc}^\infty$ the result remains valid.
\end{theorem}

\noindent
So far, $C^0$-regularity has been
investigated for the half-Lie group $\Diff(M)$
in a suitable sense (see the sketch in \cite{MCM}). Independently, related questions of regularity have been considered by Pierron and Trouv\'e (see \cite{PaT}).

\section{Preliminaries}
\begin{definition}
Let $E$ and $F$ be real locally convex spaces, $U\subseteq E$ be open and $f:U\to F$ be a map. We say that $f$ is $C^0$ if it is continuous. We say that $f$ is $C^1$ if $f$ is continuous, the directional derivative
\[ df(x,y):= (D_yf)(x):=\lim_{t\to 0} \frac{1}{t}(f(x+ty)-f(x)) \]
(with $t\neq 0$) exists in $F$ for all $(x,y)\in U\times E$, and $df:U\times E\to F$ is continuous. Recursively, for $k\in \N$ we say that $f$ is $C^k$ if $f$ is $C^1$ and $df:U\times E\to F$ is $C^{k-1}$. We say that $f$ is $C^\infty$ (or smooth) if $f$ is $C^k$ for each $k\in \N$.
\end{definition}

\begin{definition}
Let $E_1, E_2$ and $F$ be real locally convex spaces, $U\subseteq E_1$ and $V\subseteq E_2$ be open subsets, $r, s\in \N\cup\{0,\infty\}$ and $f:U\times V\to F$ be a map. If the iterated directional derivatives 
\[ d^{(i,j)} f ((x,a),y_1,...,y_i,b_1,...,b_j):= ( D_{(y_1,0)}...D_{(y_i,0)}D_{(0,b_1)}...D_{(0,b_j)} f)(x,a) \]
exist for all $i, j \in \N\cup\{0\}$ such that $i\leq r$ and $j\leq s$, and all $y_1,...,y_i \in E_1$ and $b_1,...,b_j \in E_2$, and, we assume that the mappings
\[ d^{(i,j)}f:U\times V \times E_1^i \times E_2^j \to F \]
are continuous, then $f$ is called a $C^{r,s}$-map.
\end{definition}

\begin{definition}
Let $X$ be a locally compact topological space, endowed with a measure \\
$\mu:\mathcal{B}(X)\to [0,\infty]$ on its $\sigma$-algebra of Borel sets and let $Y$ be a topological space. A function $\gamma:X\to Y$ is called Lusin $\mu$-measurable (or $\mu$-measurable) if for each compact subset $K\subseteq X$, there exists a sequence $(K_n)_{n\in \N}$ of compact subsets $K_n\subseteq K$ such that each restriction $\gamma|_{K_n}$ is continuous and $\mu\left( K \backslash \cup_{n\in \N} K_n\right)=0$.
\end{definition}
\noindent
For details of the construction of Lebesgue spaces, we refer the reader to see \cite{Nin1}.

\begin{definition}
Let $E$ be a locally convex space, $a<b$ be real numbers, $1\leq p <\infty$ and $\lambda:\mathcal{B}([a,b])\to [0,\infty)$ be the restriction of the Lebesgue-Borel measure on $\R$. We define the set $\lp([a,b],E)$ as the set of all Lusin $\lambda$-measurable functions $\gamma : [a,b]\to E$ such that for each continuous seminorm $q$ on $E$ we have 
\[q\circ \gamma \in \lp([a,b],\R).\]
And we endow it with the locally convex topology defined by the family of seminorms 
\[ \lVert \cdot \lVert_{\lp,q} : \lp([a,b],E) \to [0,\infty[,\quad \lVert \gamma \lVert_{\lp,q}:= \lVert q\circ \gamma\lVert_{\lp}.\]
Let $\gamma \sim \eta$ if and only if $\gamma(t) = \eta(t)$ for almost all $t\in [a,b]$ and write $[\gamma]$ for the equivalence class of $\gamma$. We define the Hausdorff locally convex space
\[ L^p([a,b],E):= \lp([a,b],E) / [0] \]
with seminorms 
\[ \lVert \cdot \lVert_{L^p,q} : L^p([a,b],E) \to [0,\infty[,\quad \lVert [\gamma] \lVert_{L^p,q}:= \lVert \gamma\lVert_{\lp,q}.\]
\end{definition}

\begin{definition}\label{Linfinito} Let $E$ be a locally convex space, $a<b$ be real numbers and $\lambda:\mathcal{B}([a,b])\to [0,\infty)$ be the restriction of the Lebesgue measure on $\R$. We define the set $\mathcal{L}^{\infty}([a,b],E)$ of all Lusin $\lambda$-measurable, essentially bounded functions $\gamma : [a,b]\to E$. For each continuous seminorm $q:E\to \Rn$ we define the seminorm 
\[ \lVert \gamma \lVert_{\mathcal{L}^{\infty},q} :=\esssup_{t\in [a,b]} q\circ \gamma(t). \]
We endow $\mathcal{L}^{\infty}([a,b],E)$ with the Hausdorff locally convex topology given by these seminorms.\\
Let $\gamma \sim \eta$ if and only if $\gamma = \eta$ a.e. We define the Hausdorff locally convex space
\[ L^{\infty} ([a,b],E):= \mathcal{L}^{\infty}([a,b],E) /[0] \]
with seminorms 
\[ \lVert \cdot \lVert_{L^{\infty},q} : L^{\infty}([a,b],E) \to [0,\infty[,\quad \lVert [\gamma] \lVert_{L^{\infty},q}:= \lVert \gamma\lVert_{\mathcal{L}^{\infty},q}.\]
We define the vector space $\mathcal{L}_{rc}^\infty([a,b],E)$ of all Borel measurable functions $\gamma:[a,b]\to E$ such that the image $\text{Im}(\gamma)$ has compact and metrizable closure, endowed with the topology induced by $\mathcal{L}^{\infty}([a,b],E)$. Thus
\[ L_{rc}^{\infty} ([a,b],E):= \mathcal{L}_{rc}^{\infty}([a,b],E) /[0] \]
is a Hausdorff locally convex space.
\end{definition}

\begin{remark}
Let $E$ be a Frechet space and $p\in [1,\infty]$. If $L_\mathcal{B}^p ([0,1],E)$ denotes the Lebesgue space constructed with the set of Borel measurable functions (see \cite{GL2}), then $L_\mathcal{B}^p ([0,1],E)$ coincides with $L^p([0,1],E)$ (\cite[Proposition 2.10]{Nin1}).
\end{remark}

\begin{remark}
Let $1\leq q\leq p < \infty$, then 
\[\mathcal{L}^\infty([a,b],E) \subseteq \mathcal{L}^p([a,b],E)\subseteq \mathcal{L}^q([a,b],E)\subseteq \mathcal{L}^1([a,b],E)\] 
as a consequence of H\"{o}lder's inequality.
\end{remark}

\begin{definition}
Let $E$ be a locally convex space and ${\gamma: [a,b]\to E}$ be such that $\alpha \circ \gamma \in \mathcal{L}^1 ([a,b],\R)$ for each $\alpha \in E'$. An element $z\in E$ is called the \textit{weak integral of $\gamma$} if 
\[ \quad \alpha(z) = \int_a^b (\alpha \circ \gamma)(s) ds,\]
for each $\forall \alpha \in E'$. Then $z$ is called the weak integral of $\gamma$ from $a$ to $b$, and we write $z=:\int_a^b \gamma(s)ds$.
\end{definition}

\begin{definition}
Let $E$ be a locally convex space. We say that a sequence $(x_n)_n \subset E$ is a Cauchy sequence if for each $\varepsilon > 0$ and each continuous seminorm $q$ of $E$, there exists an integer $N_{\varepsilon,q} \in \N$, such that for all $m,n\geq N_{\varepsilon,q}$ we have
\[ q(x_m-x_n)<\varepsilon. \]
We say that $E$ is sequentially complete if every Cauchy sequence converge in $E$.
\end{definition}
\noindent
The following lemma \cite[Proposition 2.26]{Nin1} allows us to define absolute continuous functions with vector values.
\begin{lemma}
If $E$ is sequentially complete locally convex space, then for each $\gamma \in \mathcal{L}^1([a,b],E)$, the weak integral $\int_a^b \gamma(s)ds$ exists and the map
\[ \eta:[a,b]\to E,\quad \eta(t)=\int_a^t \gamma(s)ds \]
is continuous. 
\end{lemma}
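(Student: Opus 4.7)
The plan is to construct the weak integral as the limit of the obvious integrals of step functions, using sequential completeness of $E$, and then derive continuity of the indefinite integral from a Hahn--Banach seminorm estimate.

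First, I would show that the subspace of step functions is dense in $\mathcal{L}^1([a,b],E)$ with respect to every seminorm $\|\cdot\|_{\mathcal{L}^1,q}$. Given $\gamma\in\mathcal{L}^1([a,b],E)$, Lusin measurability applied to the compact set $[a,b]$ produces a sequence of compact subsets $K_n\subseteq[a,b]$ with $\gamma|_{K_n}$ continuous; replacing $K_n$ by $K_1\cup\cdots\cup K_n$ I may assume the $K_n$ are increasing with $\lambda([a,b]\setminus K_n)\to 0$. Since $\gamma(K_n)$ is compact and therefore bounded with respect to every continuous seminorm, a uniformly fine partition of $K_n$ yields a step function $\sigma_n\colon K_n\to E$ approximating $\gamma|_{K_n}$ uniformly to within $1/n$; extending by $0$ outside $K_n$ and invoking the absolute continuity of the real Lebesgue integral of $q\circ\gamma$ to bound the tail $\int_{[a,b]\setminus K_n}q\circ\gamma\,ds$, I obtain $\|\gamma-\sigma_n\|_{\mathcal{L}^1,q}\to 0$ for every continuous seminorm $q$. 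For a step function $\sigma=\sum_i c_i\chi_{A_i}$ set $I(\sigma):=\sum_i c_i\lambda(A_i)\in E$; the key inequality $q(I(\sigma-\sigma'))\le\|\sigma-\sigma'\|_{\mathcal{L}^1,q}$ then shows that $(I(\sigma_n))_n$ is a Cauchy sequence in $E$. Sequential completeness gives a limit $z\in E$, independent of the approximating sequence by the same inequality. Passing to the limit in the identity $\alpha(I(\sigma_n))=\int_a^b\alpha\circ\sigma_n\,ds$ (valid for $\alpha\in E'$, using continuity of $\alpha$ and convergence in $\mathcal{L}^1(\R)$) proves $\alpha(z)=\int_a^b\alpha\circ\gamma\,ds$, so $z=\int_a^b\gamma(s)\,ds$ as desired.

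Applying the same argument to $\gamma\cdot\chi_{[a,t]}$ for each $t\in[a,b]$ gives the pointwise definition of $\eta(t)$ and the additivity $\eta(t)-\eta(s)=\int_s^t\gamma(r)\,dr$ for $a\le s\le t\le b$. Continuity then follows from the Hahn--Banach consequence
\[
q(w)=\sup\{|\alpha(w)|:\alpha\in E',\ |\alpha|\le q\}\qquad(w\in E),
\]
which combined with the weak integral property yields
\[
q(\eta(t)-\eta(s))\le\int_s^t q\circ\gamma(r)\,dr
\]
for every continuous seminorm $q$; the right hand side tends to $0$ as $t\to s$ by absolute continuity of the real indefinite integral of $q\circ\gamma\in\mathcal{L}^1([a,b],\R)$.

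The main obstacle is the density step, because $E$ is only assumed sequentially complete rather than metrizable or quasi-complete, so one cannot rely on a single defining norm. The resolution is that on each $K_n$ the image $\gamma(K_n)\subseteq E$ is compact and hence simultaneously bounded and uniformly approximable with respect to every continuous seminorm, which lets a single step function approximation $\sigma_n$ serve for all seminorms at once and thereby makes $(I(\sigma_n))$ Cauchy in the full locally convex topology; the tail contribution outside $K_n$ is controlled, for each seminorm separately, by the integrability assumption.
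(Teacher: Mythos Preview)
The paper does not give its own proof of this lemma; it merely cites \cite[Proposition~2.26]{Nin1}. So there is no in-paper argument to compare against, and I will assess your proposal on its own merits.

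Your overall strategy---approximate by step functions, use sequential completeness to get a limit, and deduce continuity via the Hahn--Banach estimate $q\bigl(\int_s^t\gamma\bigr)\le\int_s^t q\circ\gamma$---is the right one, and your continuity argument is correct.

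The gap is in the construction of the single Cauchy sequence $(I(\sigma_n))_n$. In a locally convex space that is not metrizable, the phrase ``approximating $\gamma|_{K_n}$ uniformly to within $1/n$'' has no meaning: there is no canonical scale ``$1/n$'' against which all seminorms can be compared. Compactness of $\gamma(K_n)$ gives boundedness with respect to every seminorm, but it does \emph{not} give a single partition mesh that makes the step-function error small in every seminorm simultaneously. Concretely, for a fixed seminorm $q$ the mesh $\delta$ needed so that $\sup_{K_n}q(\gamma-\sigma_n)<\varepsilon$ depends on $q$ (via the $q$-modulus of continuity of $\gamma|_{K_n}$) and on $n$; for the diagonal choice ``mesh $1/n$ on $K_n$'' there is no reason that $\sup_{K_n}q(\gamma-\sigma_n)\to 0$, and hence no reason that $(I(\sigma_n))_n$ is Cauchy in~$E$.

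The clean repair is a two-stage limit. First, for each fixed $n$, take step functions $\sigma_k^{(n)}$ on $K_n$ with mesh $1/k\to 0$; since $\gamma|_{K_n}$ is uniformly continuous with respect to every seminorm, the integrals $I(\sigma_k^{(n)})$ form a Cauchy sequence in $E$ and converge to some $z_n\in E$, which is the weak integral of $\gamma\chi_{K_n}$. Second, for every seminorm $q$ one has
\[
q(z_n-z_m)\;\le\;\|\gamma\chi_{K_n}-\gamma\chi_{K_m}\|_{\mathcal L^1,q}\;\le\;\int_{[a,b]\setminus K_{\min(n,m)}}q\circ\gamma\,ds\;\longrightarrow\;0,
\]
so $(z_n)_n$ is Cauchy in $E$; its limit is the desired weak integral of $\gamma$. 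This uses sequential completeness twice but avoids the ill-posed diagonalisation.
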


\noindent
A related important result of weak integrals is the Mean Value Theorem (see e.g. \cite{GL1}).
\begin{theorem}
Let $E$ and $F$ be locally convex spaces, $U\subseteq E$ be an open subset, $f:U\to F$ a $C^1$-map and $x,y\in U$ such that the line segment $\{ tx+(1-t)y \in E : t\in [0,1]\}$ is contained in $U$. Then
\[ f(y)-f(x)=\int_0^1 df(x+t(y-x),y-x)dt.\]
\end{theorem}

\begin{definition}\label{defABE}
Let $E$ be a sequentially complete locally convex space and $p\in[1,\infty]$. For $t_0 \in [a,b]$, we say that a function $\eta:[a,b]\to E$ is $L^p$-absolutely continuous (or just absolutely continuous if there is no confusion) if there exists a $[\gamma]\in L^p([a,b],E)$ such that 
\begin{equation}
    \eta(t)=\eta(t_0)+\int_{t_0}^t \gamma(s)ds, \quad t\in [a,b].
\end{equation}
We denote the space of all $L^p$-absolutely continuous functions by $AC_{L^p}([a,b],E)$.
Let $t_0\in [a,b]$ be fixed, since $\eta':=[\gamma]$ is necessarily unique (see \cite[Lemma 2.28]{Nin}), the map
\begin{equation}
    \Phi:AC_{L^p}([a,b],E)\to E\times L^p([a,b],E),\quad \eta\mapsto (\eta(t_0),\eta')
\end{equation}
is an isomorphism of vector spaces. We endow $AC_{L^p} ([a,b],E)$ with the Hausdorff locally convex vector topology which makes $\Phi$ an isomorphism of topological vector space (see \cite[Definition 3.1]{Nin}).
\end{definition}
\noindent
The following result will allow us to study differentiability of functions with values in $AC_{L^p}([a,b],E)$.
\begin{lemma}\label{actocl}
Let $E$ be a sequentially complete locally convex space and $p\in[1,\infty]$. Then the map 
\begin{equation}
    \Psi: AC_{L^p} ([a,b],E) \to C([a,b],E)\times {L^p} ([a,b],E),\quad \eta\mapsto (\eta, \eta')
\end{equation}
is a linear topological embedding with closed image.
\end{lemma}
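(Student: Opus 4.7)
The plan is to verify in turn that $\Psi$ is linear and continuous, that its inverse on the image is continuous, and that the image is closed. Throughout, I would lean on the topological isomorphism $\Phi\colon AC_{L^p}([a,b],E)\to E\times L^p([a,b],E)$, $\eta\mapsto(\eta(t_0),\eta')$ from Definition~\ref{defABE}.

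First, linearity is immediate. For continuity, the second component $\eta\mapsto\eta'$ is $\text{pr}_2\circ\Phi$, hence continuous. For the first component $\eta\mapsto\eta\in C([a,b],E)$, I would use the identity $\eta(t)=\eta(t_0)+\int_{t_0}^{t}\eta'(s)\,ds$ combined with the estimate $q\bigl(\int f\bigr)\le\int q(f)$ valid for weak integrals (obtained by testing against $\alpha\in E'$ with $|\alpha|\le q$ and invoking Hahn--Banach), followed by Hölder's inequality, to obtain
\[
\sup_{t\in[a,b]}q(\eta(t))\;\le\;q(\eta(t_0))+(b-a)^{1/p'}\,\|\eta'\|_{L^p,q}
\]
for every continuous seminorm $q$ on $E$, with $p'$ the conjugate exponent (usual conventions at $1$ and $\infty$). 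The right-hand side is a continuous seminorm of $\Phi(\eta)$, so the first component of $\Psi$ is continuous. Together with the second component, $\Psi$ is continuous into $C([a,b],E)\times L^p([a,b],E)$.

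Second, to show $\Psi$ is a topological embedding, I would argue that $\Phi\circ\Psi^{-1}\colon \Psi(AC_{L^p}([a,b],E))\to E\times L^p([a,b],E)$ is continuous: since $\Phi(\eta)=(\eta(t_0),\eta')$, this map sends $(\eta,\gamma)\mapsto(\eta(t_0),\gamma)$, i.e. it is the restriction of $\text{ev}_{t_0}\times\text{id}$ to the image, and both factors are continuous linear on the ambient product.

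Third, for closedness of the image I would exhibit it as the joint zero set of continuous linear functionals. For each $t\in[a,b]$ and each $\alpha\in E'$, set
\[
F_{t,\alpha}(\eta,\gamma):=\alpha(\eta(t)-\eta(t_0))-\int_{t_0}^{t}(\alpha\circ\gamma)(s)\,ds.
\]
Each $F_{t,\alpha}$ is continuous and linear on $C([a,b],E)\times L^p([a,b],E)$: the first summand factors through $\text{ev}_t-\text{ev}_{t_0}$, while the second is dominated by $(b-a)^{1/p'}\|\gamma\|_{L^p,q}$ for any continuous seminorm $q\ge|\alpha|$. By the defining property of the weak integral, $\Psi(AC_{L^p}([a,b],E))\subseteq\bigcap_{t,\alpha}\ker F_{t,\alpha}$. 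Conversely, if $(\eta,\gamma)$ lies in this intersection, then $\mathcal{L}^p([a,b],E)\subseteq\mathcal{L}^1([a,b],E)$ together with sequential completeness of $E$ ensures that the weak integral $\int_{t_0}^{t}\gamma(s)\,ds$ exists in $E$, and the vanishing of all $F_{t,\alpha}$ says that $\alpha\bigl(\eta(t)-\eta(t_0)\bigr)=\alpha\bigl(\int_{t_0}^{t}\gamma(s)\,ds\bigr)$ for every $\alpha\in E'$; since $E$ is Hausdorff locally convex, $E'$ separates points, so $\eta(t)=\eta(t_0)+\int_{t_0}^{t}\gamma(s)\,ds$ for all $t$, whence $\eta\in AC_{L^p}([a,b],E)$ with $\eta'=[\gamma]$ and $(\eta,\gamma)=\Psi(\eta)$.

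The main obstacle will be precisely this last step: one must upgrade the family of weak equalities $\alpha(\eta(t)-\eta(t_0))=\int_{t_0}^{t}(\alpha\circ\gamma)\,ds$ to a genuine equality in $E$, and this is where the Hausdorff property of $E$ (yielding separation by $E'$) and sequential completeness (guaranteeing existence of weak integrals of $\mathcal{L}^1$-functions) are both indispensable.
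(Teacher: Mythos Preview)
Your proof is correct and complete. The overall strategy matches the paper's---both factor through the isomorphism $\Phi$ and use point evaluation to invert $\Psi$ on its image---but the arguments differ in texture at two places.

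For continuity of the first component $\eta\mapsto\eta$, the paper simply asserts that the integration operator $I\colon E\times L^p([a,b],E)\to C([a,b],E)$, $(x,[\gamma])\mapsto x+\int_a^{\,\cdot}\gamma$, is continuous and writes $\Psi=\Theta\circ\Phi$ with $\Theta=(I,\mathrm{pr}_2)$; you instead supply the explicit seminorm estimate via H\"older, which is exactly what underlies the continuity of $I$. For closedness, the paper runs a net argument: if $(\eta_\alpha,\eta_\alpha')\to(\eta,[\gamma])$, then $\eta_\alpha(a)\to\eta(a)$ by evaluation, and continuity of $\Theta$ forces the limit to equal $(I(\eta(a),[\gamma]),[\gamma])$, whence $\eta=I(\eta(a),[\gamma])\in AC_{L^p}$. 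Your approach instead exhibits the image as the common zero set of the continuous linear functionals $F_{t,\alpha}$ and then invokes existence of the weak integral plus separation of points by $E'$. Both are valid; the paper's version is shorter and avoids explicit recourse to Hahn--Banach, while yours gives a direct characterisation of the image as an intersection of closed hyperplanes, which is arguably more transparent about \emph{why} the image is closed.
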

\begin{proof}
We let $I:E\times L^p([a,b],E)\to C([a,b],E)$ be the continuous map given by
\[ I((x,[\gamma])(t) := x+\int_a^t \gamma(s)ds,\quad t\in [a,b]\]
for each $x\in E$ and $[\gamma]\in L^p([a,b],E)$. Let $\Phi:AC_{L^p}([a,b],E)\to E\times L^p([a,b],E)$, with $t_0:=a$, be the isomorphism of topological vector spaces as above. We consider the map
\[ \Theta:E\times L^p([a,b],E)\to C([a,b],E)\times {L^p} ([a,b],E),\quad  (x,[\gamma])\mapsto \big(I(x,[\gamma]),[\gamma]\big)\]
which is continuous.\\
Moreover, since the evaluation map $\varepsilon_a :C([a,b],E)\to E$, $\eta\mapsto \eta(a)$ is continuous, the map $\left(\Theta|^{\text{Im}(\Theta)}\right)^{-1}=(\varepsilon_a,\text{id}_{L^p})$ is also continuous. Hence $\Psi$ is a topological embedding.\\
Let $(\eta_\alpha, \eta_\alpha^\prime)_\alpha$ be a net in $\text{Im}(\Theta)$ that converges to $(\eta,[\gamma])\in C([a,b],E)\times {L^p} ([a,b],E)$. By continuity of $\varepsilon_a$, the net $(\eta_\alpha(a))_\alpha$ converges to $\eta(a)\in E$, and by continuity of $\Theta$, the net $\left(\Theta( \eta_\alpha(a),\eta_\alpha^\prime)\right)_\alpha$ converges to $\left( I(\eta(a),[\gamma]),[\gamma]\right) \in \text{Im}(\Theta)$. Since the net $\left(\Theta( \eta_\alpha(a),\eta_\alpha^\prime)\right)_\alpha$ also converges to $(\eta,[\gamma])$, we have
\[ I(\eta(a),[\gamma]) = \eta.\]
Therefore $\eta'=[\gamma]$ and $(\eta,[\gamma])\in \text{Im}(\Theta)$.
\end{proof}
\begin{remark}
Let $p\in[1,\infty]$. Since the inclusion map ${ AC_{L^p}([a,b],E)\to C([a,b],E)}$ is continuous \cite[Lemma 3.2]{Nin1}, the topology on $AC_{L^p}([a,b],E)$ is independent of the choice of $t_0$ and finer than the compact-open topology. Hence the sets
\[ AC_{L^p}([a,b],V):=\{ \eta \in AC_{L^p}([a,b],E) : \eta ([a,b])\subseteq V \}\]
are open on $AC_{L^p}([a,b],E)$, for each open subset $V\subseteq E$.
\end{remark}
\noindent
For maps between absolute continuous function spaces, we have the following results (see \cite{Nin1}).

\begin{lemma}\label{lemmanog} Let $E$ be a sequentially complete locally convex space and $p\in[1,\infty]$. For $c,d\in \R$ with $c<d$ we define a map $g:[c,d]\to [a,b]$ via
\[ g(t)=a+\frac{t-c}{d-c}(b-a),\quad t\in [c,d].\]
Then $\eta\circ g\in AC_{L^p}([c,d],E)$ for each $\eta \in AC_{L^p}([a,b],E)$ and the map 
\[ AC_{L^p}(g,E):AC_{L^p}([a,b],E) \to AC_{L^p}([c,d],E), \quad \eta \mapsto \eta\circ g\]
is continuous linear. 
\end{lemma}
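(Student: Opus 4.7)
The plan is to reduce the statement to the isomorphism $\Phi$ of Definition~\ref{defABE} by pushing the affine change of variables through it. First, I would fix $\eta \in AC_{L^p}([a,b],E)$ and let $[\gamma] = \eta'$, so that $\eta(t) = \eta(a) + \int_a^t \gamma(s)\,ds$. Setting $\kappa := (b-a)/(d-c) > 0$, the affine substitution $s = g(u)$ yields
\[
(\eta \circ g)(t) \; = \; \eta(a) + \int_a^{g(t)} \gamma(s)\,ds \; = \; \eta(a) + \int_c^t \kappa\, \gamma(g(u))\,du,
\]
where I verify the second equality by applying an arbitrary $\alpha \in E'$ and using the usual scalar change-of-variables for $\alpha \circ \gamma \in \mathcal{L}^1([a,b],\R)$, then invoking uniqueness of the weak integral (points of $E$ are separated by $E'$ via Hahn--Banach). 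So provided $\kappa\, (\gamma \circ g)$ represents an element of $L^p([c,d],E)$, I conclude $\eta \circ g \in AC_{L^p}([c,d],E)$ with derivative $\kappa\,[\gamma \circ g]$.

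Next I would check that pre-composition with $g$ is a continuous linear map $L^p([a,b],E) \to L^p([c,d],E)$. For Lusin measurability, given a compact $K \subseteq [c,d]$, the image $g(K) \subseteq [a,b]$ is compact and carries compact subsets $K_n$ with $\gamma|_{K_n}$ continuous and $g(K) \setminus \bigcup_n K_n$ Lebesgue null; the sets $g^{-1}(K_n)$ are then compact subsets of $K$ on each of which $\gamma \circ g$ is continuous, and their complement in $K$ has Lebesgue measure zero because the affine map $g$ sends null sets to null sets. For the seminorm bound, for $p \in [1,\infty)$ the scalar substitution formula applied to $q\circ \gamma$ gives
\[
\|q \circ \gamma \circ g\|_{\mathcal{L}^p} \; = \; \kappa^{-1/p}\, \|q \circ \gamma\|_{\mathcal{L}^p},
\]
while for $p = \infty$ one has $\|q \circ \gamma \circ g\|_{\mathcal{L}^\infty} = \|q \circ \gamma\|_{\mathcal{L}^\infty}$, since $g$ is a homeomorphism preserving null sets. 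The assignment $[\gamma] \mapsto [\gamma \circ g]$ is well-defined on equivalence classes for the same reason, and linearity is immediate; hence pre-composition with $g$ is continuous and linear.

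Finally, I would use the isomorphism $\Phi$ with base point $t_0 = a$ on the source and $t_0 = c$ on the target. Since $g(c) = a$, the calculation above identifies $AC_{L^p}(g,E)$ in these coordinates with
\[
E \times L^p([a,b],E) \longrightarrow E \times L^p([c,d],E), \qquad (x, [\gamma]) \longmapsto \bigl(x,\; \kappa\,[\gamma \circ g]\bigr),
\]
which is manifestly continuous linear. Transporting back through $\Phi^{-1}$ on the target then yields the continuity and linearity of $AC_{L^p}(g,E)$.

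The only mildly technical point is the measure-theoretic verification that $\gamma \circ g$ is Lusin measurable and that the substitution formula transfers to $\mathcal{L}^p$ for vector-valued functions; both are routine consequences of $g$ being an affine bijection of intervals that scales Lebesgue measure by the constant factor $\kappa$, so I do not expect any genuine obstacle.
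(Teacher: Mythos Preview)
Your argument is correct. The paper does not actually prove this lemma in the text: it is stated as one of several results imported from \cite{Nin1} (see the sentence ``For maps between absolute continuous function spaces, we have the following results (see \cite{Nin1})'' preceding Lemma~\ref{lemmanog}), so there is no in-paper proof to compare against. Your approach---computing $(\eta\circ g)'=\kappa\,[\gamma\circ g]$ via the scalar change-of-variables formula applied through $E'$, checking Lusin measurability and the $\mathcal{L}^p$-seminorm scaling under the affine bijection~$g$, and then transporting everything through the isomorphism $\Phi$ of Definition~\ref{defABE}---is exactly the natural route and matches what one finds in the cited source. The only cosmetic remark is that your derivative formula $(\eta\circ g)'=\kappa\,[\gamma\circ g]$ is the chain rule $\eta'(g(t))\,g'(t)$ in disguise, which is how the result is usually phrased; nothing is missing.
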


\begin{lemma}\label{Acts0} Let $E$ and $F$ be sequentially complete locally convex spaces, $p\in[1,\infty]$, $V\subseteq E$ be open subset and ${f:V\to F}$ be a $C^1$-map. Then $f\circ \eta \in AC_{L^p}([a,b],F)$ for each $\eta\in AC_{L^p}([a,b],V)$.
\end{lemma}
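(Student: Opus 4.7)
The plan is to exhibit an explicit candidate for the $L^p$-derivative of $f\circ\eta$ and then verify the integral equation from Definition \ref{defABE}. Picking a representative of the derivative class, again denoted $\eta'\in\lp([a,b],E)$, I would set
\[
  \zeta(s):=df(\eta(s),\eta'(s))\in F.
\]
It suffices to show (i) $\zeta\in\lp([a,b],F)$, and (ii) $(f\circ\eta)(t)=(f\circ\eta)(a)+\int_a^t\zeta(s)\,ds$ for every $t\in[a,b]$, since together these yield $f\circ\eta\in AC_{L^p}([a,b],F)$ with derivative $[\zeta]$.

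For (i), Lusin measurability of $\zeta$ follows from Lusin measurability of the map $(\eta,\eta')\colon[a,b]\to V\times E$ (the first component is continuous, the second lies in $\lp$) composed with the continuous map $df$. For the seminorm bound, note that $K:=\eta([a,b])$ is compact in $V$. Given a continuous seminorm $q$ on $F$, continuity of $df$ at each point $(x_0,0)$ with $x_0\in K$, combined with linearity of $df(x_0,\cdot)$, produces an open $U_{x_0}\ni x_0$ in $E$ and a continuous seminorm $p_{x_0}$ on $E$ with $q(df(x,y))\leq p_{x_0}(y)$ for all $(x,y)\in U_{x_0}\times E$. Extracting a finite subcover of $K$ and summing the resulting seminorms yields a single continuous seminorm $p_E$ on $E$ with
\[
  q(df(x,y))\leq p_E(y)\qquad\text{for all }(x,y)\in K\times E.
\]
Then $q\circ\zeta\leq p_E\circ\eta'$, and $p_E\circ\eta'\in L^p([a,b],\R)$, so $\zeta\in\lp([a,b],F)$. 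The case $p=\infty$ is identical using essential suprema.

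For (ii), continuity of $\eta$ together with the existence of a convex $0$-neighborhood base in $E$ lets me pick a partition $a=t_0<\cdots<t_N=b$ and open convex sets $W_j\subseteq V$ with $\eta(\inttj)\subseteq W_j$; by additivity of the weak integral, it suffices to treat each $\inttj=:[c,d]$. On a refinement $c=u_0<\cdots<u_M=d$, the Mean Value Theorem recalled in the preliminaries applies on every $[u_{m-1},u_m]$ because $W_j$ is convex, and using $\eta(u_m)-\eta(u_{m-1})=\int_{u_{m-1}}^{u_m}\eta'(s)\,ds$ together with linearity of $df(x,\cdot)$ and Fubini for the weak integral, one rewrites the telescoping sum as
\[
  f(\eta(d))-f(\eta(c))=\int_c^d\int_0^1 df\bigl(\eta(u_{m(s)-1})+r(\eta(u_{m(s)})-\eta(u_{m(s)-1})),\,\eta'(s)\bigr)\,dr\,ds,
\]
where $m(s)$ is the index with $s\in[u_{m(s)-1},u_{m(s)}]$. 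As the mesh tends to $0$, uniform continuity of $\eta$ on $[c,d]$ and continuity of $df$ force the integrand to converge pointwise to $\zeta(s)$, while the majorant $p_E(\eta'(s))$ from (i) belongs to $L^1$. Testing both sides against an arbitrary $\alpha\in F'$, pulling $\alpha$ inside the weak integrals, and applying scalar dominated convergence turns the equation into a statement about real-valued functions whose limit along refinements yields $\alpha(f(\eta(d))-f(\eta(c)))=\alpha\bigl(\int_c^d\zeta(s)\,ds\bigr)$; Hausdorffness of $F$ then removes $\alpha$. The main obstacle is this last limiting step: passage to the limit in a weak integral with values in a merely sequentially complete locally convex space has to be routed through continuous linear functionals, and the seminorm estimate from (i) is exactly what provides the scalar dominant needed to make the interchange legitimate.
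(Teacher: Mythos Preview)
The paper does not prove this lemma itself; it quotes the result from \cite{Nin1}, so there is no in-paper proof to compare against. Your argument is essentially the standard chain-rule verification and is correct in outline: part (i) is clean, and part (ii) is the right strategy---telescope via the Mean Value Theorem on a fine partition, pull the weak integral through the continuous linear map $df(x,\cdot)$, apply $\alpha\in F'$ to reduce to scalar Fubini and scalar dominated convergence, then use Hausdorffness.

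One point deserves a sentence of care. Your seminorm bound $q(df(x,y))\leq p_E(y)$ is obtained on an \emph{open neighbourhood} $U$ of the compact set $K=\eta([a,b])$, not merely on $K$; the interpolation points $\eta(u_{m-1})+r(\eta(u_m)-\eta(u_{m-1}))$ used in the Mean Value Theorem lie in the convex sets $W_j$, which need not be contained in $U$ as you have set things up. The fix is immediate---choose the convex $W_j$ small enough that $W_j\subseteq U\cap V$, which is possible since $U\cap V$ is an open neighbourhood of each $\eta(t)$---but you should say so explicitly, since the uniform dominant for dominated convergence depends on it. With that adjustment the limiting step goes through exactly as you describe.
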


\begin{lemma}\label{Acts}
Let $E$ and $F$ be sequentially complete locally convex spaces, $p\in[1,\infty]$ and $k\in \N \cup \{0,\infty\}$. Let $V\subseteq E$ be open subset and ${f:V\to F}$ be a $C^{k+2}$-map, then the map
\[ f_*:=AC_{L^p} ([a,b],f) : AC_{L^p}([a,b],V)\to AC_{L^p}([a,b],F),\quad \eta \mapsto f\circ \eta \]
is $C^{k}$. Moreover, we have 
\[ d(f_*)(\eta,\eta_1) = df \circ (\eta,\eta_1)\]
for all $(\eta,\eta_1)\in AC_{L^p}([a,b],V)\times AC_{L^p}([a,b],E)$.
\end{lemma}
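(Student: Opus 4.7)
The strategy is to exploit the topological embedding $\Psi$ from Lemma \ref{actocl} to split the $C^k$-regularity question into two simpler ones, and then to induct on $k$. The key observation is the chain rule: for each $\eta\in AC_{L^p}([a,b],V)$ one has $(f\circ\eta)'=df\circ(\eta,\eta')$ almost everywhere (derived from the Mean Value Theorem and Lemma \ref{Acts0} applied pointwise). Consequently
\begin{equation*}
\Psi\circ f_*(\eta)=\bigl(f\circ\eta,\;df\circ(\eta,\eta')\bigr)\in C([a,b],F)\times L^p([a,b],F).
\end{equation*}
Because $\Psi$ is a topological embedding with closed image, $f_*$ is $C^k$ if and only if each component of $\Psi\circ f_*$ is $C^k$ as a map into the corresponding factor.

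For the base case $k=0$ (so $f$ is $C^2$), the first component is continuous by factoring through the continuous inclusion $AC_{L^p}([a,b],V)\hookrightarrow C([a,b],V)$ followed by the continuous superposition map $C([a,b],f)$. The second component is the composition of the continuous map $\eta\mapsto(\eta,\eta')\colon AC_{L^p}([a,b],V)\to C([a,b],V)\times L^p([a,b],E)$ with the $L^p$-superposition operator induced by $df$; the latter is continuous because $df\colon V\times E\to F$ is $C^1$, so one can invoke the $L^p$-superposition estimates from \cite{Nin1}, using that $\eta([a,b])$ is compact and that $df$ is locally Lipschitz on a neighbourhood of $\eta([a,b])\times\{0\}$.

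For the inductive step, assume the statement for $k-1$. Applying the Mean Value Theorem pointwise to $f$ gives, for $(\eta,\eta_1)\in AC_{L^p}([a,b],V)\times AC_{L^p}([a,b],E)$ and $h\in\R$ small enough that every line segment from $\eta(t)$ to $\eta(t)+h\eta_1(t)$ stays in $V$,
\begin{equation*}
\tfrac{1}{h}\bigl(f_*(\eta+h\eta_1)-f_*(\eta)\bigr)=\int_0^1 df\circ\bigl(\eta+sh\eta_1,\eta_1\bigr)\,ds.
\end{equation*}
Since $df$ is $C^{k+1}=C^{(k-1)+2}$, the induction hypothesis yields that $(df)_*\colon AC_{L^p}([a,b],V\times E)\to AC_{L^p}([a,b],F)$ is $C^{k-1}$, in particular continuous. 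Identifying $AC_{L^p}([a,b],V)\times AC_{L^p}([a,b],E)$ with an open subset of $AC_{L^p}([a,b],V\times E)$ and letting $h\to 0$ inside the integral yields $d(f_*)(\eta,\eta_1)=df\circ(\eta,\eta_1)$, i.e.\ $d(f_*)=(df)_*$; since the latter is $C^{k-1}$, we conclude that $f_*$ is $C^k$. The case $k=\infty$ follows because $f\in C^\infty$ forces $f\in C^{k+2}$ for every finite $k$.

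The main obstacle is the base-case continuity of the $L^p$-superposition operator associated with $df$ as a map $C([a,b],V)\times L^p([a,b],E)\to L^p([a,b],F)$ in the sequentially complete locally convex setting: in the Banach framework this is the classical Nemytskii continuity result, but here one must estimate $\|df\circ(\eta_\alpha,\eta_\alpha')-df\circ(\eta,\eta')\|_{L^p,q}$ for an arbitrary continuous seminorm $q$ on $F$, decomposing the difference into a term controlled by the uniform convergence $\eta_\alpha\to\eta$ (exploiting $C^1$-Lipschitz bounds of $df$ on a compact neighbourhood of $\eta([a,b])$) and a term controlled by the $L^p$-convergence $\eta_\alpha'\to\eta'$, invoking the locally convex $L^p$-theory developed in \cite{Nin1}.
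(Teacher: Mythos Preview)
The paper does not supply its own proof of this lemma; it is one of several preliminary results quoted from \cite{Nin1} (see the sentence preceding Lemma~\ref{Acts0}). Your outline is the standard approach and matches what is done there: the inductive reduction via the Mean Value identity $d(f_*)=(df)_*$ is correct and collapses everything to the base case $k=0$, which in turn splits via the embedding $\Psi$ of Lemma~\ref{actocl} into an easy $C$-component and a harder $L^p$-component.

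You correctly flag the crux: continuity of the Nemytskii-type operator $C([a,b],V)\times L^p([a,b],E)\to L^p([a,b],F)$, $(\gamma,\xi)\mapsto df\circ(\gamma,\xi)$, when $E$ is merely sequentially complete. The paper itself later proves a version of this (Lemma~\ref{lcfunction}), but only under the additional hypothesis that the $L^p$-factor takes values in a Fr\'echet space, because density of continuous functions in $L^p$ is used there. For the general sequentially complete case one really does need the finer treatment of \cite{Nin1}, which you invoke without reproducing. So your sketch is structurally correct and honest about where the real work lies; it simply does not fill in that step, which is consistent with the paper's own decision to cite rather than prove.
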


\begin{remark}\label{times}
For sequentially complete locally convex spaces $E$ and $F$ we have
\[ AC_{L^p}([a,b],E\times F) \cong AC_{L^p}([a,b],E)\times AC_{L^p}([a,b],F). \]
\end{remark}
\noindent

\begin{definition}
Let $E$ and $F$ be complex topological vector spaces, where is $F$ is locally convex, $U\subseteq E$ be an open subset and $f:U\to F$ be a mapping. We say that $f$ is complex analytic if it is continuous and, for each $x\in U$, there exists a $0$-neighbouhood $V\subseteq E$ such that $x+V\subseteq U$ and certain continuous homogeneous polynomials $\beta_n:E\to F$ of degree $n$, such that $f$ admits the expansion: $f(x+y)=\sum_{n=0}^\infty \beta_n (y)$, for all $y\in V$. 
\end{definition}
\noindent
For our context, we present an application of \cite[Proposition 7.7]{Ber} to our particular case.
\begin{lemma}\label{GL}
Let $E$ and $F$ be complex locally convex spaces, and $f : U \to F$
be a mapping defined on an open subset of $E$. Then $f$ is complex analytic if and only if $f$ is smooth and the mapping $df (x, \cdot) : E \to F$ is complex linear for each $x\in U$.
\end{lemma}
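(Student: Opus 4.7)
The plan is to treat the two implications separately. The substantive content of the backward direction is already isolated in the cited \cite[Proposition 7.7]{Ber}, so much of the work will consist of checking that the notions of smoothness and complex analyticity used here fit Bertram's framework, and then quoting the proposition.

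For the direction ($\Rightarrow$), assume $f$ is complex analytic and fix $x\in U$. Let $V\subseteq E$ and $(\beta_n)_{n\geq 0}$ be as in the definition, so that $f(x+y)=\sum_{n=0}^\infty \beta_n(y)$ on $V$. I would first argue that after shrinking $V$ to a balanced $0$-neighborhood, the series converges uniformly in a seminorm-wise sense (using the continuity of each $\beta_n$ and standard Cauchy estimates for continuous polynomials on locally convex spaces). Termwise computation of the directional derivative then yields $df(x,y)=\beta_1(y)$, which is $\mathbb{C}$-linear because $\beta_1$ is a continuous homogeneous polynomial of degree~$1$. Iterating the same reasoning on the expansions obtained by polarizing the $\beta_n$ shows that $df$ is itself complex analytic, and hence $f$ is smooth by induction.

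For the direction ($\Leftarrow$), the main idea is reduction to one complex variable. Given $x\in U$ and $h\in E$, the map $z\mapsto f(x+zh)$, defined on a neighborhood of $0\in\mathbb{C}$, is smooth by the chain rule, and its differential at each $z$ is the $\mathbb{C}$-linear map $w\mapsto w\,df(x+zh,h)$, so this restriction is holomorphic as a map into the locally convex space $F$. The classical one-variable Taylor formula then recovers coefficients of the form $\tfrac{1}{n!}d^{(n)}f(x;h,\ldots,h)$, and continuity of the iterated differentials supplied by smoothness lets one assemble these into continuous homogeneous polynomials $\beta_n$ on $E$. The remaining task is to upgrade pointwise validity along each complex line to a locally convergent expansion on a full $0$-neighborhood in $E$; this is precisely what the cited \cite[Proposition 7.7]{Ber} provides once one checks that Bastiani/Keller smoothness on a locally convex space is a special instance of the general differential calculus developed in \cite{Ber}.

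The main obstacle I expect is that final assembly step in the backward direction: passing from one-dimensional holomorphy along each complex line (plus continuity of all higher differentials) to a genuine power series expansion convergent on a $0$-neighborhood in $E$. This is a Hartogs-type issue and is the reason for appealing to the external reference rather than reproving it; once the definitional compatibility with \cite{Ber} is verified, the rest is formal.
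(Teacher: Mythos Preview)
The paper does not actually give a proof of this lemma: it is stated without proof, prefaced only by the sentence ``we present an application of \cite[Proposition 7.7]{Ber} to our particular case,'' so the entire statement (both directions) is taken as a direct consequence of that reference. Your proposal is correct and ultimately rests on the same citation for the substantive direction; the additional work you do on the forward implication is fine but not something the paper bothers with, since the cited proposition already provides the full equivalence.
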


\begin{definition}
Let $E$ and $F$ be real locally convex spaces, $U\subseteq E$ be an open subset and $ f: U \to F$ be a map. We say that $f$ is real analytic if it extends to an analytic map $f:V\to F_{\mathbb{C}}$ on some open neighborhood $V$ of $U$ in $E_\mathbb{C}$, where $E_\mathbb{C}$ and $F_\mathbb{C}$ denotes the complexification of $E$ and $F$, respectively.
\end{definition}

\begin{lemma}\label{Analy}
Let $E$ and $F$ be sequentially complete locally convex spaces over $\mathbb{K}\in\{\R,\mathbb{C}\}$, $p\in[1,\infty]$, $V\subseteq E$ be an open set and $f:V\to F$ be a $\mathbb{K}$-analytic map. Then the map
\[ f_*:=AC_{L^p} ([a,b],f): AC_{L^p}([a,b],V)\to AC_{L^p} ([a,b],F), \quad \eta \mapsto f\circ \eta \]
is $\mathbb{K}$-analytic.
\end{lemma}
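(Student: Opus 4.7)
The plan is to handle the $\mathbb{C}$-analytic case first using Lemma \ref{GL}, and then reduce the $\mathbb{R}$-analytic case to it via complexification.

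\textbf{Complex case.} Since $f$ is $\mathbb{C}$-analytic it is smooth, so Lemma \ref{Acts} (applied for every $k\in\mathbb{N}$) shows that $f_*:=AC_{L^p}([a,b],f)$ is smooth and
\[
d(f_*)(\eta,\eta_1)=df\circ(\eta,\eta_1)
\]
for all $(\eta,\eta_1)\in AC_{L^p}([a,b],V)\times AC_{L^p}([a,b],E)$. By Lemma \ref{GL} applied to $f$, the partial $df(x,\cdot)\colon E\to F$ is $\mathbb{C}$-linear for each $x\in V$. For fixed $\eta$, the map $\eta_1\mapsto df\circ(\eta,\eta_1)$ is therefore $\mathbb{C}$-linear as a map $AC_{L^p}([a,b],E)\to AC_{L^p}([a,b],F)$, because scalar multiplication and addition in the $AC_{L^p}$-spaces are defined pointwise. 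Applying Lemma \ref{GL} in the opposite direction to $f_*$ gives that $f_*$ is $\mathbb{C}$-analytic.

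\textbf{Real case.} Pick a $\mathbb{C}$-analytic extension $\tilde f\colon \tilde V\to F_{\mathbb{C}}$ of $f$, where $\tilde V\subseteq E_{\mathbb{C}}$ is open and $V\subseteq \tilde V$. Using Remark \ref{times} together with the real-linear decomposition $E_{\mathbb{C}}=E\oplus iE$, one obtains a natural topological isomorphism
\[
AC_{L^p}([a,b],E)_{\mathbb{C}}\;\cong\;AC_{L^p}([a,b],E_{\mathbb{C}})
\]
of complex locally convex spaces, and likewise with $E$ replaced by $F$. Under this identification, the set $AC_{L^p}([a,b],\tilde V)$ is open in $AC_{L^p}([a,b],E_{\mathbb{C}})$ (by the remark after Lemma \ref{actocl}, applied to $E_{\mathbb{C}}$) and contains $AC_{L^p}([a,b],V)$. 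The complex case applied to $\tilde f$ produces a $\mathbb{C}$-analytic map
\[
\tilde f_*\colon AC_{L^p}([a,b],\tilde V)\to AC_{L^p}([a,b],F_{\mathbb{C}})
\]
whose restriction to $AC_{L^p}([a,b],V)$ is $f_*$. By the paper's definition of real analyticity, this shows $f_*$ is $\mathbb{R}$-analytic.

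\textbf{Main obstacle.} The one delicate point is the topological identification $AC_{L^p}([a,b],E)_{\mathbb{C}}\cong AC_{L^p}([a,b],E_{\mathbb{C}})$ as complex locally convex spaces (and not merely as real topological vector spaces), which is what guarantees that the $\mathbb{C}$-analytic extension $\tilde f_*$ genuinely witnesses real analyticity of $f_*$. This reduces to noting that the pointwise decomposition $\eta\mapsto(\mathrm{Re}\,\eta,\mathrm{Im}\,\eta)$ together with Remark \ref{times} is compatible with the seminorms defining the $AC_{L^p}$-topology, so the complex structures on both sides agree; it is routine but deserves explicit mention since the rest of the argument rests on it.
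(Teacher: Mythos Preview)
Your proof is correct and follows essentially the same approach as the paper: the complex case via Lemma~\ref{Acts} and Lemma~\ref{GL}, and the real case by passing to a complex-analytic extension $\tilde f$ and using $(\tilde f)_*$ as the required extension of $f_*$. You are in fact more careful than the paper, which leaves the identification $AC_{L^p}([a,b],E)_{\mathbb{C}}\cong AC_{L^p}([a,b],E_{\mathbb{C}})$ entirely implicit.
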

\begin{proof}
First we consider the case $\mathbb{K}=\mathbb{C}$. By Lemma \ref{Acts} the map $f_*$ is smooth and $d(f_*)$ is complex linear in the second variable, hence by Lemma \ref{GL} the map $f_*$ is complex analytic.\\
If $\mathbb{K}=\mathbb{R}$, then by definition the map $f$ has a complex analytic extension $\tilde{f}$, hence $(\tilde{f})_*$ is the complex analytic extension of $f_*$, whence $f_*$ is real analytic.
\end{proof}

\begin{definition}\label{defACM}
Let $N$ be a smooth manifold modeled with on a sequentially complete locally convex space  $E$ and $p\in[1,\infty]$. We say that a function $\eta:[a,b]\to N$ is $L^p$-absolutely continuous if it is continuous and there exists a partition $\{t_0,...,t_n\}$ of $[a,b]$ such that for each $j\in \{1,...,n\}$, there exists a chart $\varphi_j:U_j\to V_j$ that verifies
\begin{itemize}
\item[i)]$\eta([t_{j-1},t_j])\subseteq U_j $.
\item[ii)]$\varphi_j \circ \eta|_{[t_{j-1},t_j]}\in AC_{L^p}([t_{j-1},t_j],E)$.
\end{itemize}  
In this case, we say that these charts verify the definition of $L^p$-absolute continuity for $\eta$. If there is no confusion, we simple call $\eta$ absolutely continuous.
\end{definition}
\noindent
For the case of absolutely continuous functions with values in a manifold, the following facts are available (see \cite{Nin1}).

\begin{lemma} 
Let $N$ be a smooth manifold modeled on a sequentially complete locally convex space $E$ and $p\in[1,\infty]$. If $\eta\in\acspace$, then 
\[ \varphi\circ \eta|_{[\alpha,\beta]}\in AC_{L^p}([\alpha,\beta],E),\]
for each chart $\varphi:U\to V$ of $N$ and each subinterval $[\alpha,\beta]\subseteq [a,b]$ such that $\eta([\alpha,\beta])\subseteq U$.
\end{lemma}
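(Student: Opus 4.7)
The plan is to combine a partition-refinement with Lemma \ref{Acts0} applied to the smooth change of charts, and then to glue the resulting pieces using the continuity of $\eta$.

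By Definition \ref{defACM} fix a partition $a=t_0<t_1<\cdots<t_n=b$ and charts $\varphi_j\colon U_j\to V_j$ with $\eta([t_{j-1},t_j])\subseteq U_j$ and $\varphi_j\circ\eta|_{[t_{j-1},t_j]}\in AC_{L^p}([t_{j-1},t_j],E)$. I would first refine the partition of $[\alpha,\beta]$ by taking $\{\alpha,\beta\}\cup\{t_j : \alpha<t_j<\beta\}$, enumerated as $\alpha=s_0<s_1<\cdots<s_m=\beta$, so that each $[s_{k-1},s_k]$ is contained in a single $[t_{j(k)-1},t_{j(k)}]$. Since any $AC_{L^p}$-function restricts to an $AC_{L^p}$-function on a subinterval (apply Definition \ref{defABE} with a new base point), the restriction $\varphi_{j(k)}\circ\eta|_{[s_{k-1},s_k]}$ lies in $AC_{L^p}([s_{k-1},s_k],E)$.

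Next, because $\eta([s_{k-1},s_k])\subseteq U\cap U_{j(k)}$, the transition map
\[
\varphi\circ\varphi_{j(k)}^{-1}\colon \varphi_{j(k)}(U\cap U_{j(k)})\to V
\]
is smooth between open subsets of $E$, and $\varphi_{j(k)}\circ\eta|_{[s_{k-1},s_k]}$ takes values in its domain. Lemma \ref{Acts0} then yields
\[
\varphi\circ\eta|_{[s_{k-1},s_k]}=(\varphi\circ\varphi_{j(k)}^{-1})\circ(\varphi_{j(k)}\circ\eta|_{[s_{k-1},s_k]})\in AC_{L^p}([s_{k-1},s_k],E)
\]
for every $k\in\{1,\dots,m\}$, with some associated class of derivatives $[\gamma_k]\in L^p([s_{k-1},s_k],E)$.

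It remains to piece these together. Define $\gamma\colon[\alpha,\beta]\to E$ by concatenating representatives of $[\gamma_k]$, where overlaps at the $s_k$ are a null set and so irrelevant. Then $q\circ\gamma\in L^p([\alpha,\beta],\R)$ for each continuous seminorm $q$ on $E$, because $\|q\circ\gamma\|_{\mathcal{L}^p}^p=\sum_k\|q\circ\gamma_k\|_{\mathcal{L}^p}^p$ (or the $\esssup$ version for $p=\infty$), so $[\gamma]\in L^p([\alpha,\beta],E)$. For $t\in[s_{k-1},s_k]$ we telescope,
\[
\varphi(\eta(t))-\varphi(\eta(\alpha))=\sum_{i=1}^{k-1}\bigl(\varphi(\eta(s_i))-\varphi(\eta(s_{i-1}))\bigr)+\bigl(\varphi(\eta(t))-\varphi(\eta(s_{k-1}))\bigr)=\int_\alpha^t\gamma(s)\,ds,
\]
using continuity of $\varphi\circ\eta$ at each $s_k$ so the values on adjacent pieces agree. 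This exhibits $\varphi\circ\eta|_{[\alpha,\beta]}$ as an element of $AC_{L^p}([\alpha,\beta],E)$. The only mildly delicate point is the bookkeeping in the telescoping and the verification that the concatenated $\gamma$ indeed lies in $L^p$; both are routine consequences of the definitions and the fact that there are only finitely many pieces.
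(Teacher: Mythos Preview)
Your argument is correct. The paper does not prove this lemma itself but simply cites \cite{Nin1}; your proof is exactly the standard direct argument one would expect: refine the defining partition on $[\alpha,\beta]$, use Lemma~\ref{Acts0} with the smooth transition maps $\varphi\circ\varphi_{j(k)}^{-1}$ on each subinterval, and glue via a telescoping sum using continuity of $\varphi\circ\eta$. Nothing is missing---the restriction of an $AC_{L^p}$-function to a subinterval is immediate from Definition~\ref{defABE}, and the concatenation of finitely many $\mathcal{L}^p$-representatives is Lusin measurable with the right seminorm control, so the final gluing step is indeed routine as you say.
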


\begin{lemma} \label{pre1}
Let $M$ and $N$ be smooth manifolds modeled on sequentially complete locally convex spaces and $p\in[1,\infty]$. If $f:M\to N$ is a $C^1$-map, then $f\circ\eta \in AC_{L^p}([a,b],N)$ for each $\eta\in AC_{L^p}([a,b],M)$.
\end{lemma}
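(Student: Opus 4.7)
The plan is to verify Definition~\ref{defACM} directly for $f\circ\eta$, by exhibiting a partition of $[a,b]$ together with charts of $N$ that satisfy the two local chart conditions. Continuity of $f\circ\eta$ is immediate since $f$ is continuous (being $C^1$) and $\eta$ is continuous as an element of $AC_{L^p}([a,b],M)$, so only the local condition requires work.

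Starting from a partition $a=t_0<\cdots<t_n=b$ together with charts $\varphi_j\colon U_j\to V_j$ of $M$ provided by the $AC_{L^p}$-property of~$\eta$, I would further subdivide each $[t_{j-1},t_j]$ so that $f\circ\eta$ lies in a single chart of~$N$ on every piece. Fix~$j$; since $f\circ\eta|_{[t_{j-1},t_j]}$ is continuous on a compact interval, its image is a compact subset of~$N$ and can be covered by finitely many chart domains $\psi_{j,k}\colon W_{j,k}\to \tilde W_{j,k}$ of~$N$. Pulling these back via $f\circ\eta$ gives an open cover of $[t_{j-1},t_j]$; the Lebesgue number lemma yields a refinement $t_{j-1}=s_0<s_1<\cdots<s_m=t_j$ such that, for each $i$, the subinterval $[s_{i-1},s_i]$ is mapped by $f\circ\eta$ into some $W_{j,k(i)}$. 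Automatically $\eta([s_{i-1},s_i])\subseteq U_j$ as well, since $[s_{i-1},s_i]\subseteq [t_{j-1},t_j]$.

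On each such subinterval $I=[s_{i-1},s_i]$, I would write
\[
\psi_{j,k(i)}\circ f\circ\eta|_I \;=\; \bigl(\psi_{j,k(i)}\circ f\circ \varphi_j^{-1}\bigr)\circ \bigl(\varphi_j\circ\eta|_I\bigr).
\]
By the preceding lemma (on restrictions of $AC_{L^p}$-maps to subintervals where $\eta$ remains in a chart), $\varphi_j\circ\eta|_I$ belongs to $AC_{L^p}(I,E_M)$, where $E_M$ denotes the model space of~$M$. The map $\psi_{j,k(i)}\circ f\circ \varphi_j^{-1}$ is a $C^1$-map between open subsets of locally convex spaces, so Lemma~\ref{Acts0} applied on the interval $I$ yields $\psi_{j,k(i)}\circ f\circ\eta|_I \in AC_{L^p}(I,E_N)$. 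Concatenating the refined partitions over all $j$ produces a single partition of $[a,b]$ together with charts of~$N$ that verify Definition~\ref{defACM} for $f\circ\eta$, which gives $f\circ\eta\in AC_{L^p}([a,b],N)$.

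The main technical obstacle is the partition refinement: the given chart $\varphi_j$ of $M$ need not have its image $f(U_j)$ contained in any single chart domain of $N$, so one cannot simply reuse the partition witnessing $\eta\in AC_{L^p}([a,b],M)$. Compactness of each $f(\eta([t_{j-1},t_j]))$, combined with the Lebesgue number lemma, is the essential ingredient that produces a common refinement compatible with charts of both manifolds, and after that the content is purely the vector-valued statement of Lemma~\ref{Acts0}.
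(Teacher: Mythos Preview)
Your argument is correct and is precisely the natural proof: refine the partition witnessing $\eta\in AC_{L^p}([a,b],M)$ via compactness and the Lebesgue number lemma so that $f\circ\eta$ lands in a single chart of~$N$ on each piece, then reduce to Lemma~\ref{Acts0} via the local chart expression. The paper does not give its own proof of this lemma but simply cites \cite{Nin1}, so there is nothing to compare against beyond noting that your argument is the standard one that the cited reference would contain.
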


\section{The space of absolutely continuous sections}
\begin{definition}\label{deftop}
Let $p\in [1,\infty]$ and $N$ be a smooth manifold modeled on a sequentially complete locally convex space $E$. We denote by $\pi_{TN}:TN\to N$ its tangent bundle. For each $\eta \in AC_{L^p}([a,b],N)$ we define the set
\begin{equation}
    \Gamma_{AC}(\eta) := \{ \sigma \in AC_{L^p}([a,b], TN) : \pi_{TN} \circ \sigma = \eta \}
\end{equation}
and we endow it with the pointwise operations, making it a vector space.\\ 
Consider a partition $\{t_0,...,t_n\}$ of $[a,b]$ and charts $\{(\varphi_i,U_i): i\in \{1,...,n\}\}$ of $N$ that verify the definition of absolute continuity for $\eta$. Let $1\leq i\leq n$, since $\eta([t_{i-1},t_i])\subseteq U_i$, we have $\sigma ([t_{i-1},t_i])\subseteq TU_i$, so we endow $\Gamma_{AC}(\eta)$ with the Hausdorff locally convex vector topology which is the initial topology with respect to the linear mappings
\begin{equation}
     h_i:\Gamma_{AC}(\eta) \to  AC_{L^p}([t_{i-1},t_i],E),\quad \sigma \mapsto h_i(\sigma)= d\varphi_i \circ \sigma|_{[t_{i-1},t_i]}.
\end{equation}
In Remark \ref{independencepartition} we will show that this topology does not depend on the chosen partition.
\end{definition}

\begin{prop}\label{prop} Let $p\in [1,\infty]$ and $N$ be a smooth manifold modeled on a sequentially complete locally convex space $E$ and $\eta\in AC_{L^p}([a,b],N)$. If $P=\{t_0,...,t_n\}$ is a partition of $[a,b]$ and $\{(\varphi_i,U_i): i\in \{1,...,n\}\}$ are charts of $N$ that verify the definition of absolute continuity for $\eta$, then the map
\begin{equation}
    \Phi_{\eta,P}:\Gamma_{AC}(\eta)  \to \prod_{i=1}^n AC_{L^p}([t_{i-1},t_i],E),\quad \sigma \mapsto \left( d\varphi_i \circ \sigma|_{[t_{i-1},t_i]} \right)_{i=1}^n
\end{equation}
is a linear topological embedding with closed image. Its image $\text{Im}(\Phi_{\eta,P})$ is given by the set of all elements $(\tau_i)_{i=1}^n$ that verify
\[ \tau_{i} (t_i) = d\varphi_i \circ (T\varphi_{i+1})^{-1} \Big(\varphi_{i+1}\circ\eta(t_{i}),\tau_{i+1}(t_{i})\Big),\quad \text{ for all }i\in\{1,...,n-1\}.\]
\end{prop}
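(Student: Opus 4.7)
The plan is to exploit the fact that, by Definition \ref{deftop}, the topology on $\gac$ is the initial topology with respect to the family $(h_i)_{i=1}^n$, so that the topological-embedding claim will follow almost formally once injectivity is established; the substantive content is then the explicit identification of the image and the verification that this set is closed in the product. Linearity of $\Phi_{\eta,P}$ is immediate from the pointwise vector-space operations. For injectivity, if $\Phi_{\eta,P}(\sigma)=0$ then for each $i\in\{1,\ldots,n\}$ and each $t\in [t_{i-1},t_i]$ the tangent vector $\sigma(t)\in TU_i$ satisfies $\pi_{TN}(\sigma(t))=\eta(t)$ and $d\varphi_i(\sigma(t))=0$, so the bundle chart $T\varphi_i=(\varphi_i\circ\pi_{TN},d\varphi_i)\colon TU_i\to V_i\times E$ forces $\sigma(t)=0_{\eta(t)}$. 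Since $h_i=\pr_i\circ\Phi_{\eta,P}$ and the product topology is the initial topology with respect to the coordinate projections, by transitivity of initial topologies the topology on $\gac$ coincides with the subspace topology pulled back through the injection $\Phi_{\eta,P}$, yielding the linear topological embedding.

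To identify the image, let $\sigma\in\gac$ and $\tau_i:=h_i(\sigma)$, so that $T\varphi_i\circ\sigma|_{[t_{i-1},t_i]}=(\varphi_i\circ\eta|_{[t_{i-1},t_i]},\tau_i)$. Evaluating at $t_i$ and using that the single vector $\sigma(t_i)\in T_{\eta(t_i)}N$ admits the two chart representations $\sigma(t_i)=(T\varphi_i)^{-1}(\varphi_i(\eta(t_i)),\tau_i(t_i))=(T\varphi_{i+1})^{-1}(\varphi_{i+1}(\eta(t_i)),\tau_{i+1}(t_i))$, applying $d\varphi_i$ to both sides produces the stated compatibility identity. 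Conversely, given a compatible tuple $(\tau_i)_{i=1}^n$, I define $\sigma$ piecewise by $\sigma(t):=(T\varphi_i)^{-1}(\varphi_i(\eta(t)),\tau_i(t))$ for $t\in [t_{i-1},t_i]$; the compatibility at each interior $t_i$ forces the two definitions to agree there, $\pi_{TN}\circ\sigma=\eta$ by construction, and each piece $T\varphi_i\circ\sigma|_{[t_{i-1},t_i]}=(\varphi_i\circ\eta|_{[t_{i-1},t_i]},\tau_i)$ lies in $AC_{L^p}([t_{i-1},t_i],V_i\times E)$ by hypothesis on $\eta$ together with Remark \ref{times}. Hence $\sigma\in AC_{L^p}([a,b],TN)$ by Definition \ref{defACM} applied to the bundle charts $T\varphi_i$, and $\Phi_{\eta,P}(\sigma)=(\tau_i)_{i=1}^n$.

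Closedness of the image is obtained by writing it as the intersection of the kernels of the $n-1$ continuous linear maps
\begin{equation*}
(\tau_j)_{j=1}^n\longmapsto \tau_i(t_i)-d\varphi_i\circ (T\varphi_{i+1})^{-1}\bigl(\varphi_{i+1}(\eta(t_i)),\tau_{i+1}(t_i)\bigr),\qquad i=1,\ldots,n-1,
\end{equation*}
whose continuity reduces to continuity of the evaluations at $t_i$ (which holds because $AC_{L^p}$ embeds continuously into $C$) and to the continuity of the linear operator on $E$ arising from the derivative of the chart change $\varphi_i\circ\varphi_{i+1}^{-1}$ at $\varphi_{i+1}(\eta(t_i))$. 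The step I expect to require the most care is the reverse inclusion, namely verifying that the piecewise-glued $\sigma$ genuinely belongs to $AC_{L^p}([a,b],TN)$ in the manifold sense of Definition \ref{defACM}, since one must produce the bundle charts $T\varphi_i$ as witnesses and confirm continuity across all breakpoints simultaneously; once this is in place, the remaining assertions fall out from the initial-topology description of the topology on $\gac$.
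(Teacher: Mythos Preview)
Your proposal is correct and follows essentially the same route as the paper's proof: injectivity via the bundle charts, the topological embedding from the initial-topology description (the paper phrases this as verifying continuity of $\Phi_{\eta,P}$ and then checking $h_i\circ\Phi_{\eta,P}^{-1}=q_i$, which is exactly your transitivity argument unpacked), the image identification by piecewise gluing via $(T\varphi_i)^{-1}$, and closedness from continuity of the compatibility maps. The only cosmetic difference is that for the reverse inclusion the paper invokes Lemma~\ref{pre1} to see each $\sigma_i$ is $AC_{L^p}$ into $TN$, whereas you verify condition~(ii) of Definition~\ref{defACM} directly via Remark~\ref{times}; both are equivalent, and the paper likewise cites the Glueing Lemma for the continuity across breakpoints that you flag in your final paragraph.
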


\begin{proof}
The linear map $\Phi_{\eta,P}$ is continuous by the previous definition and it is injective.\\ 
Let $i\in \{1,...,n\}$. If $W_i= \varphi_{i+1}(U_i\cap U_{i+1})$, then the map
\[ g_i:W_i\times E \to E,\quad (x,y)\mapsto d\varphi_i \circ (T\varphi_{i+1})^{-1} (x,y),\]
is continuous and linear in the second component. This enables us to define the closed vector subspace $K$ given by the elements 
\[(\tau_i)_{i=1}^n \in \prod_{i=1}^n AC_{L^p}([t_{i-1},t_i],E)\] 
which verify
\[ \tau_{i} (t_i) = d\varphi_i \circ (T\varphi_{i+1})^{-1} \Big(\varphi_{i+1}\circ\eta(t_{i}),\tau_{i+1}(t_{i})\Big),\quad \text{ for all }i\in\{1,...,n-1\}.\]
We will show that the image of $\Phi_{\eta,P}$, denoted by $\text{Im}(\Phi_{\eta,P})$, coincides with the closed subspace $K$. Indeed, the space $\text{Im}(\Phi_{\eta,P})$ is contained in $K$ by definition of $\Phi_{\eta,P}$. Let us consider now $\tau=(\tau_i)_{i=1}^n\in K$. We define the maps
\[\sigma_i:[t_{i-1},t_i]\to TN,\quad s\mapsto (T\varphi_{i})^{-1} \Big(\varphi_{i}\circ\eta(s),\tau_{i}(s)\Big)\]
and
\[ \sigma_{\tau}:[a,b]\to TN, \quad t\mapsto \sigma_i(t), \text{ for }t\in [t_{i-1},t_i].\]
By the Glueing Lemma, the map $ \sigma_{\tau}$ is continuous. 
Moreover, by Lemma \ref{pre1} each function $\sigma_i$ is absolutely continuous, hence $\sigma_\tau$ is too. Since ${\pi_{TN}\circ  \sigma_{\tau} = \eta}$ we have that $ \sigma_{\tau}\in \Gamma_{AC}(\eta)$ and $\Phi_{\eta,P}( \sigma_{\tau})=(\tau_i)_{i=1}^n$. Therefore $\text{Im}(\Phi_{\eta,P})=K$.\\ 
It remains to show that the inverse map
\[ \Phi_{\eta,P}^{-1}:\text{Im}(\Phi_{\eta,P})\to \Gamma_{AC}(\eta), \quad (\tau_i)_{i=1}^n\mapsto  \sigma_{\tau}\]
is continuous. If $h_i$ are the functions that define the topology (see Definition \ref{deftop}), then for each $i\in \{1,...,n\}$ we have $h_i \circ \Phi_{\eta,P}^{-1} = q_i$, where $q_i$ is the continuous linear map
\[ q_i: \prod_{j=1}^n AC_{L^p}([t_{j-1},t_j],E) \to AC_{L^p}([t_{i-1},t_i],E),\quad (\eta_j)_{j=1}^n\mapsto \eta_i. \]
Hence $\Phi_{\eta,P}^{-1}$ is continuous.
\end{proof}

\begin{remark}
From now on, we identify $\Phi_{\eta,P}$ with the homeomorphism 
\[\Phi_{\eta,P}:\Gamma_{AC}(\eta)\to \text{Im}(\Phi_{\eta,P}).\]
\end{remark}

\begin{coro}
Let $p\in [1,\infty]$ and $N$ be a smooth manifold modeled on a Banach space $E$ (resp. Fr\'echet space). If $\eta \in AC_{L^p}([a,b],N)$, then the vector space $\Gamma_{AC}(\eta)$ is a Banach space (resp. Fr\'echet space).
\end{coro}
\begin{proof}
    This follows from the fact that each vector space $AC_{L^p}([t_{i-1},t_i],E)$ is a Banach space (resp. Fr\'echet space).
\end{proof}

\begin{prop}\label{fsection}
Let $p\in [1,\infty]$. Let $M$ and $N$ be smooth manifolds modeled on sequentially complete locally convex spaces and $f:M\to N$ a $C^{2}$-map. If $\eta\in AC_{L^p}([a,b],M)$, then $Tf\circ \sigma \in \Gamma_{AC}(f\circ \eta)$ for each $\sigma \in \Gamma_{AC}(\eta)$. Moreover, the map
\[ \widetilde{f}:\Gamma_{AC}(\eta)\to  \Gamma_{AC}(f\circ \eta),\quad \sigma\mapsto Tf\circ \sigma,\]
is continuous linear.
\end{prop}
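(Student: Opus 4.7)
The plan is to verify in order the inclusion $Tf\circ\sigma\in\Gamma_{AC}(f\circ\eta)$, the linearity of $\widetilde f$, and finally its continuity. Since $f$ is $C^{k+3}$ with $k\geq 0$, the tangent map $Tf\colon TM\to TN$ is $C^{k+2}$, in particular $C^1$. For $\sigma\in\Gamma_{AC}(\eta)$, Lemma \ref{pre1} yields $Tf\circ\sigma\in AC_{L^p}([a,b],TN)$, and the naturality identity $\pi_{TN}\circ Tf=f\circ\pi_{TM}$ gives $\pi_{TN}\circ(Tf\circ\sigma)=f\circ\eta$, so indeed $Tf\circ\sigma\in\Gamma_{AC}(f\circ\eta)$. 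Linearity of $\widetilde f$ follows at once from the fiberwise linearity of $Tf$ on each $T_{\eta(t)}M$, together with the pointwise vector-space operations on the two $\Gamma_{AC}$-spaces.

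For continuity, I would fix a partition $P=\{t_0,\ldots,t_n\}$ of $[a,b]$ and charts $(\varphi_i,U_i)$ of $M$ verifying the definition of absolute continuity for $\eta$; by Definition \ref{deftop}, the topology on $\Gamma_{AC}(\eta)$ is the initial topology with respect to the maps $h_i^M\colon\sigma\mapsto d\varphi_i\circ\sigma|_{[t_{i-1},t_i]}$. Using compactness of $[a,b]$ together with the continuity of $f\circ\eta$, after refining $P$ if necessary one can choose charts $(\psi_i,V_i)$ of $N$ with $f\circ\eta([t_{i-1},t_i])\subseteq V_i$; replacing each $U_i$ by $U_i\cap f^{-1}(V_i)$ we may additionally assume $f(U_i)\subseteq V_i$. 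Setting $g_i:=\psi_i\circ f\circ\varphi_i^{-1}\colon\varphi_i(U_i)\to F$ with $E,F$ the model spaces of $M,N$, Lemma \ref{Acts0} applied to the $C^1$-map $g_i$ shows that the $(\psi_i,V_i)$ verify the definition of absolute continuity for $f\circ\eta$, so that $\Gamma_{AC}(f\circ\eta)$ carries the initial topology with respect to the corresponding maps $h_i^N\colon\tau\mapsto d\psi_i\circ\tau|_{[t_{i-1},t_i]}$.

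By the universal property of initial topologies, it then suffices to show $h_i^N\circ\widetilde f$ is continuous for every $i$. A direct chart computation using $T(\psi_i\circ f\circ\varphi_i^{-1})=Tg_i$ gives
\[
(h_i^N\circ\widetilde f)(\sigma)(t) \;=\; dg_i\bigl(\varphi_i\circ\eta(t),\,h_i^M(\sigma)(t)\bigr), \qquad t\in[t_{i-1},t_i],
\]
where $dg_i\colon\varphi_i(U_i)\times E\to F$ is the principal part of $Tg_i$ and is $C^{k+2}$, in particular $C^2$. The assignment $\sigma\mapsto\bigl(\varphi_i\circ\eta|_{[t_{i-1},t_i]},h_i^M(\sigma)\bigr)$ is continuous affine from $\Gamma_{AC}(\eta)$ into $AC_{L^p}([t_{i-1},t_i],\varphi_i(U_i)\times E)$ (using Remark \ref{times}), and composing it with the superposition operator $AC_{L^p}([t_{i-1},t_i],dg_i)$, which is continuous by Lemma \ref{Acts}, delivers the continuity of $h_i^N\circ\widetilde f$.

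The main obstacle I anticipate is the bookkeeping needed to arrange a common partition and compatible charts so that both $\eta$ and $f\circ\eta$ are handled simultaneously; once that is done, the derivative count works out cleanly because $dg_i$ loses one order of differentiability relative to $g_i$, so the hypothesis $f\in C^{k+3}$ yields exactly the $C^2$-regularity of $dg_i$ required by Lemma \ref{Acts} to produce a continuous superposition operator.
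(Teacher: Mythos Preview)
Your proposal is correct and follows essentially the same route as the paper's proof: both arrange a common partition with charts $(\varphi_i,U_i)$ on $M$ and $(\psi_i,V_i)$ on $N$ satisfying $f(U_i)\subseteq V_i$, then reduce continuity of $\widetilde f$ to continuity of the local superposition operators induced by $dg_i=d(\psi_i\circ f\circ\varphi_i^{-1})$, invoking Lemma~\ref{Acts} with the same derivative count ($f\in C^{k+3}\Rightarrow dg_i\in C^{k+2}\supseteq C^2$). The only cosmetic difference is that you argue directly via the universal property of the initial topology on $\Gamma_{AC}(f\circ\eta)$, whereas the paper packages the same computation through the embeddings $\Phi_{\eta,P}$ and $\Phi_{f\circ\eta,P}$ of Proposition~\ref{prop} and checks the image compatibility explicitly.
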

\begin{proof}
Let  $E_M$ and $E_N$ be the modeling spaces of $M$ and $N$ respectively.\\ 
By Lemma \ref{Acts0}, $f\circ \eta \in AC_{L^p}([a,b],N)$ and for each $\sigma\in \Gamma_{AC}(\eta)$, we have $Tf\circ \sigma \in AC_{L^p}([a,b],TN)$. Since $T_{\eta(t)}f \left(\sigma(t)\right) \in T_{f\circ \eta(t)} N$ for each $t\in [a,b]$, we have
\[ \pi_{TN}\circ (Tf\circ \sigma) = f\circ \eta. \]
Thus $\widetilde{f}(\sigma)\in \Gamma_{AC}(f\circ \eta)$ for each $\sigma\in \Gamma_{AC}(\eta)$. The linearity of $\widetilde{f}$ is clear.\\
We may choose a partition $P=\{t_0,...,t_n\}$ of $[a,b]$ such that there exist  families of charts $\{(\varphi_i,U_i):i\in \{1,...,n\}\}$ and $\{(\phi_i,V_i):i\in \{1,...,n\}\}$ that verify the definition of absolute continuity for $\eta$  and $f\circ\eta$ respectively, such that
\[ f(U_i) \subseteq V_i, \quad \text{for each }i\in\{1,...,n\}. \]
Let $i\in\{1,...,n\}$. For each $\sigma\in \Gamma_{AC}(\eta)$ we denote $\eta_i=\eta|_{[t_{i-1},t_i]}$ and $\sigma_i=\sigma|_{[t_{i-1},t_i]}$, and we define the maps
\[ F_i: AC_{L^p}([t_{i-1},t_i],E_M) \to AC_{L^p}([t_{i-1},t_i],E_N),\quad \tau \mapsto \left(d\phi_i \circ Tf \circ  T\varphi_i^{-1} \right)\circ (\varphi_i\circ \eta_i, \tau), \]
and 
\[ F: \prod_{i=1}^n AC_{L^p}([t_{i-1},t_i],E_M) \to \prod_{i=1}^n  AC_{L^p}([t_{i-1},t_i],E_N),\quad \left(\tau_i\right)_{i=1}^n \mapsto \Big( F_i(\tau_i) \Big)_{i=1}^n, \]
which are continuous by Lemma \ref{Acts}. We will show that $F\left( \text{Im}\left(\Phi_{\eta,P}\right)\right) \subseteq \text{Im}\left(\Phi_{f\circ\eta,P}\right)$.\\
Let $i\in\{1,..,n-1\}$. We denote
\begin{align*}
\tau_i:&=F_i(d\varphi_i\circ \sigma_i) \\
&= (d\phi_i \circ Tf \circ d\varphi_i^{-1})(\varphi_i\circ \eta_i, d\varphi_i\circ \sigma_i)\\
&= d\phi_i \circ Tf \circ \sigma_i.
\end{align*}
Then, since $\sigma_{i+1}(t_i)=\sigma_{i}(t_i)$ (by continuity of $\sigma)$, we have
\begin{align*}
d\phi_i \circ (T\phi_{i+1})^{-1} \Big(\phi_{i+1}\circ(f\circ\eta)(t_{i}),\tau_{i+1}(t_{i})\Big) &=
d\phi_i \circ (T\phi_{i+1})^{-1} \Big(\phi_{i+1}\circ(f\circ\eta)(t_{i}),d\phi_{i+1} \circ Tf \circ \sigma_{i+1}(t_{i})\Big) \\
&= d\phi_i\circ Tf \circ \sigma_{i+1}(t_{i}) \\
&= d\phi_i\circ Tf \circ \sigma_{i}(t_{i}) \\
&= \tau_i (t_i).
\end{align*}
Hence $F\circ \Phi_{\eta,P}(\sigma) \in \text{Im}(\Phi_{f\circ \eta,P})$ and consequently
\[ \widetilde{f}=\Phi_{f\circ\eta,P}^{-1}\circ F \circ \Phi_{\eta,P}. \]
Thus $\widetilde{f}$ is continuous.
\end{proof}

\begin{remark}\label{independencepartition}
The topology of $\Gamma_{AC}(\eta)$ does not depend on the partition or charts chosen. Indeed, let us consider topologies $\tau_1$ and $\tau_2$ defined by the partition $P_1$ and $P_2$ of $[a,b]$, respectively. Since the identity map $\text{id}_M:M\to M$ is smooth and $T\text{id}_M=\text{id}_{TM}$, by the previous proposition, the map
\[ \widetilde{\text{id}_M}:\left(\Gamma_{AC}(\eta),\tau_1\right) \to \left(\Gamma_{AC}(\eta),\tau_2\right), \quad \sigma \mapsto \sigma, \]
is continuous linear (hence smooth) regardless of the partition or charts chosen.
\end{remark}

\begin{remark}\label{re}
Let us consider the set $C([a,b],TN)$ endowed with the compact-open topology. For $\eta\in C([a,b],N)$ we consider the vector space
\[\Gamma_C (\eta)=\{\sigma\in C([a,b],TN): \pi_{TN}\circ \sigma=\eta \},\]
endowed with the subspace topology inherited from $C([a,b],TN)$. Since for each partition of $[a,b]$, the inclusions 
\[AC_{L^p}([t_{i-1},t_i],E)\to C([t_{i-1},t_i],E),\quad \gamma\mapsto \gamma,\] 
are continuous \cite[Lemma 3.2]{Nin1}, following the idea of the proof of Proposition \ref{prop}, the inclusion map $J_\Gamma:\Gamma_{AC} (\eta)\to \Gamma_C(\eta)$ is also continuous. This implies that for each open subset $V\subseteq TN$, the set
\[\mathcal{V}:=\{ \sigma \in \Gamma_{AC}(\eta): \sigma([a,b])\subseteq V \} \]
is open in $\Gamma_{AC}(\eta)$.
\end{remark}

\begin{prop}
Let $p\in [1,\infty]$. Let $N_1$ and $N_2$ be smooth manifolds modeled on sequentially complete locally convex spaces  and $\pr_i:N_1\times N_2\to N_i$ be the $i$-projection for $i\in \{1,2\}$. If $\eta_1\in AC_{L^p}([a,b],N_1)$ and $\eta_2\in AC_{L^p}([a,b],N_2)$, then the map
\[ \mathcal{P}:\Gamma_{AC}(\eta_1, \eta_2)\to \Gamma_{AC}(\eta_1)\times \Gamma_{AC}(\eta_2),\quad \sigma \mapsto (T\pr_1,T\pr_2)(\sigma) \]
is a linear homeomorphism.
\end{prop}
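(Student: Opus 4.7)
The plan is to reduce the statement to the concrete description of the topology on $\Gamma_{AC}$ given by Proposition \ref{prop}, combined with the product decomposition of $AC_{L^p}$ from Remark \ref{times}.

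First I would verify continuity and linearity of $\mathcal{P}$. Linearity is immediate from the linearity of $T\pr_1$ and $T\pr_2$ on each fibre. For continuity, since $\pr_i\colon N_1\times N_2\to N_i$ is smooth for $i\in\{1,2\}$, Proposition \ref{fsection} provides the continuous linear maps $\widetilde{\pr_i}\colon \Gamma_{AC}(\eta_1,\eta_2)\to \Gamma_{AC}(\pr_i\circ(\eta_1,\eta_2))=\Gamma_{AC}(\eta_i)$, whose product is precisely $\mathcal{P}$. Injectivity follows because the natural map $(T\pr_1,T\pr_2)\colon T(N_1\times N_2)\to TN_1\times TN_2$ is a diffeomorphism and hence fibrewise bijective.

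To construct the inverse, let $E_1,E_2$ be the modeling spaces. I would choose a common partition $P=\{t_0,\dots,t_n\}$ of $[a,b]$, together with charts $(\varphi_i,U_i)$ of $N_1$ verifying the definition of absolute continuity for $\eta_1$ and $(\psi_i,V_i)$ of $N_2$ verifying it for $\eta_2$; such $P$ exists by common refinement. Then the product charts $(\varphi_i\times\psi_i,U_i\times V_i)$ verify absolute continuity for $(\eta_1,\eta_2)$, and under the identification $T(N_1\times N_2)\cong TN_1\times TN_2$ one has $d(\varphi_i\times\psi_i)=d\varphi_i\times d\psi_i$. Given $(\sigma_1,\sigma_2)\in\Gamma_{AC}(\eta_1)\times\Gamma_{AC}(\eta_2)$, set $\sigma(t):=(\sigma_1(t),\sigma_2(t))$. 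Then $d(\varphi_i\times\psi_i)\circ\sigma|_{[t_{i-1},t_i]}=(d\varphi_i\circ\sigma_1|_{[t_{i-1},t_i]},\,d\psi_i\circ\sigma_2|_{[t_{i-1},t_i]})$, which lies in $AC_{L^p}([t_{i-1},t_i],E_1)\times AC_{L^p}([t_{i-1},t_i],E_2)\cong AC_{L^p}([t_{i-1},t_i],E_1\times E_2)$ by Remark \ref{times}. Since $\pi_{T(N_1\times N_2)}\circ\sigma=(\eta_1,\eta_2)$, this shows $\sigma\in\Gamma_{AC}(\eta_1,\eta_2)$ and gives a set-theoretic inverse $\mathcal{Q}$ to $\mathcal{P}$.

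For continuity of $\mathcal{Q}$, the plan is to factor it through the embeddings of Proposition \ref{prop}. The natural identification
\[
\prod_{i=1}^n AC_{L^p}([t_{i-1},t_i],E_1\times E_2)\;\cong\;\prod_{i=1}^n AC_{L^p}([t_{i-1},t_i],E_1)\;\times\;\prod_{i=1}^n AC_{L^p}([t_{i-1},t_i],E_2)
\]
is a topological isomorphism (by Remark \ref{times} applied coordinatewise and reordering of finite products). Under it, the compatibility conditions describing $\mathrm{Im}(\Phi_{(\eta_1,\eta_2),P})$ in Proposition \ref{prop} split into the two independent families of compatibility conditions describing $\mathrm{Im}(\Phi_{\eta_1,P})$ and $\mathrm{Im}(\Phi_{\eta_2,P})$, because $d(\varphi_i\times\psi_i)\circ T(\varphi_{i+1}\times\psi_{i+1})^{-1}$ decomposes as the product of the corresponding transition maps for $N_1$ and $N_2$. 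Consequently $\mathcal{P}$ corresponds, via $\Phi_{(\eta_1,\eta_2),P}$ and $\Phi_{\eta_1,P}\times\Phi_{\eta_2,P}$, to this coordinate reordering, and its inverse $\mathcal{Q}=\Phi_{(\eta_1,\eta_2),P}^{-1}\circ\Xi\circ(\Phi_{\eta_1,P}\times\Phi_{\eta_2,P})$ (with $\Xi$ the inverse reordering homeomorphism) is continuous.

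The one point requiring some care, which I expect to be the main obstacle, is verifying cleanly that the product-chart transition maps on $T(N_1\times N_2)$ factor as products of the individual transition maps on $TN_1$ and $TN_2$, so that the closed image description of $\Phi_{(\eta_1,\eta_2),P}$ really does split along the product. Once this bookkeeping is in place, the rest is a straightforward application of the previously proved results.
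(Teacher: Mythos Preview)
Your proposal is correct and follows essentially the same route as the paper: both use Proposition~\ref{fsection} for continuity of $\mathcal{P}$, pick a common partition with product charts, and construct the continuous inverse as $\Phi_{(\eta_1,\eta_2),P}^{-1}\circ\alpha\circ(\Phi_{\eta_1,P}\times\Phi_{\eta_2,P})$ via the identification of Remark~\ref{times}. The point you flag as the main obstacle---that the endpoint compatibility conditions in $\mathrm{Im}(\Phi_{(\eta_1,\eta_2),P})$ split componentwise---is exactly what the paper dispatches in one line by noting the compatibility can be checked in $E_1$ and $E_2$ separately.
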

\begin{proof}
By Proposition \ref{fsection}, the map $\mathcal{P}$ is continuous and clearly linear. 
Let $P=\{t_0,..., t_n\}$ be a
partition of $[a,b]$ and let
$\{(\phi_{1,i},U_{1,i})\colon i\in \{1,\ldots,n\}\}$
and $\{(\phi_{2,i},U_{2,i})\colon i\in \{1,\ldots,n\}\}$
be families of charts of $N_1$ and $N_2$, respectively,
that verify the definition
of absolute continuity for $\eta_1$ and $\eta_2$,
respectively. Then $\eta:=(\eta_1,\eta_2)\colon [a,b]\to N_1\times N_2$
is $L^p$-absolutely continuous and it is clear that
the charts $\{(\phi_{1,i}\times \phi_{2,i},U_{1,i}\times U_{2,i})\colon
i\in \{1,\ldots, n\}\}$ satisfy the condition of absolute continuity for
$\eta$. For $j\in \{1,2\}$, consider the linear topological embedding
\[
\Phi_{\eta_j,P}\colon \Gamma_{\eta_j}\to\prod_{i=1}^n AC_{L^p}([t_{i-1},t_i],E_j),\;\;
\tau\mapsto (d\phi_{j,i}\circ \tau|_{[t_{i-1},t_i]})_{i=1}^n
\]
where $E_j$ is the modeling space of $N_j$. Identifying $T_{(x_1,x_2)}(N_1\times N_2)$ with $T_{x_1}N_1\times T_{x_2}N_2$, we define the map
\[
\Phi_{\eta,P}\colon \Gamma_{\eta}\to\prod_{i=1}^n AC_{L^p}([t_{i-1},t_i],E_1\times E_2),\;\;
\tau\mapsto ((d\phi_{1,i}\times d\phi_{2,i})\circ \tau|_{[t_{i-1},t_i]})_{i=1}^n
\]
which is a linear topological embedding. For $i\in \{1,\ldots, n\}$, let
\[
\alpha_i\colon  AC_{L^p}([t_{i-1}, t_i],E_1)\times  AC_{L^p}([t_{i-1},t_i],E_2)\to
 AC_{L^p}([t_{i-1}, t_i], E_1\times E_2)
\]
be the map taking a pair $(f_1,f_2)$ of functions to the function
$t\mapsto(f_1(t),f_2(t))$; we know that $\alpha_i$ is an isomorphism
of topological vector spaces. Then also
\[
\alpha\colon \left(\prod_{i=1}^n  AC_{L^p}([t_{i-1},t_i],E_1)\right)\times
\left(\prod_{i=1}^n  AC_{L^p}([t_{i-1},t_i],E_2)\right)\to
\prod_{i=1}^n  AC_{L^p}([t_{i-1},t_i],E_1\times E_2),
\]
\[
((f_i)_{i=1}^n,(g_i)_{i=1}^n)\mapsto(\alpha(f_i,g_i))_{i=1}^n
\]
is an isomorphism of topological vector spaces.
If $(f_i)_{i=1}^n$ is in the image of $\Phi_{\eta_1,P}$
and $(g_i)_{i=1}^n$ is in the image of $\Phi_{\eta_2,P}$,
then $\alpha((f_i)_{i=1}^n,(g_i)_{i=1}^n)$
is in the image of $\Phi_{\eta,P}$,
as the compatibility at the endpoints can be checked
by considering the components in $E_1$ and $E_2$.
We can therefore define a function
\[
\Theta:=\Phi_{\eta,P}^{-1}\circ
\alpha\circ (\Phi_{\eta_1,P}\times \Phi_{\eta_2,P})
\colon \Gamma_{AC}(\eta_1)\times\Gamma_{AC}(\eta_2)\to\Gamma_{AC}(\eta),
\]
which is continuous and linear. We readily check that
$\mathcal{P}(\Theta(\sigma,\tau))=(\sigma,\tau)$
for all $\sigma\in \Gamma_{AC}(\eta_1)$ and $\tau\in \Gamma_{AC}(\eta_2)$.
Hence $\mathcal{P}$ is surjective and thus bijective, with
$\mathcal{P}^{-1}=\Theta$ a continuous map.
\end{proof}

\begin{prop}\label{proprestric}
Let $p\in [1,\infty]$. Let $N$ be a smooth manifold modeled on a sequentially complete locally convex space $E$. If $\eta\in AC_{L^p}([a,b],N)$ and $P=\{t_0,...,t_n\}$ is a partition of $[a,b]$, then the map
\[ \rho: \Gamma_{AC}(\eta)\to \prod_{i=1}^n \Gamma_{AC}\left(\eta|_{[t_{i-1},t_i]}\right),\quad \sigma\mapsto \left(\sigma|_{[t_{i-1},t_i]}\right)_{i=1}^n \]
is a linear topological embedding with closed image.
\end{prop}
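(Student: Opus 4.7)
The plan is to reduce the statement to Proposition~\ref{prop}, which already describes $\Gamma_{AC}(\eta)$ as a closed topological subspace of a product $\prod_{i=1}^n AC_{L^p}([t_{i-1},t_i],E)$ via the embedding $\Phi_{\eta,P}$. The idea is that each factor $\Gamma_{AC}(\eta|_{[t_{i-1},t_i]})$ is essentially one such chart coordinate, and so $\rho$ is just a ``repackaging'' of $\Phi_{\eta,P}$.

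First I would, without loss of generality, refine the partition: choose charts $\{(\varphi_i,U_i)\colon i\in\{1,\ldots,n\}\}$ of $N$ witnessing the definition of absolute continuity for $\eta$ with respect to the given partition $P=\{t_0,\ldots,t_n\}$ (if the original charts needed a finer partition, we may subdivide, take the obtained homeomorphism, and then reassemble the pieces into the $n$ prescribed blocks using Proposition~\ref{prop} to pass back). For each $i\in\{1,\ldots,n\}$, the single chart $\varphi_i$ alone verifies the definition of absolute continuity for $\eta|_{[t_{i-1},t_i]}$ with respect to the trivial partition $\{t_{i-1},t_i\}$, so by Proposition~\ref{prop} the map
\[
\psi_i\colon \Gamma_{AC}(\eta|_{[t_{i-1},t_i]})\to AC_{L^p}([t_{i-1},t_i],E),\quad \tau\mapsto d\varphi_i\circ\tau
\]
is an isomorphism of topological vector spaces (the compatibility conditions in the image statement of Proposition~\ref{prop} are vacuous for a single subinterval).

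Next I would observe that the composition
\[
\big(\textstyle\prod_{i=1}^n \psi_i\big)\circ \rho\colon \Gamma_{AC}(\eta)\to \prod_{i=1}^n AC_{L^p}([t_{i-1},t_i],E)
\]
is precisely the map $\Phi_{\eta,P}$ of Proposition~\ref{prop}, since both send $\sigma$ to $(d\varphi_i\circ\sigma|_{[t_{i-1},t_i]})_{i=1}^n$. Because $\prod_{i=1}^n \psi_i$ is an isomorphism of topological vector spaces, and $\Phi_{\eta,P}$ is a linear topological embedding with closed image (Proposition~\ref{prop}), it follows that $\rho$ is itself a linear topological embedding with closed image. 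Concretely, $\rho=(\prod_i\psi_i)^{-1}\circ\Phi_{\eta,P}$, and the image of $\rho$ is the preimage under the homeomorphism $\prod_i\psi_i$ of the closed set $\mathrm{Im}(\Phi_{\eta,P})$, which can be described explicitly as those tuples $(\sigma_i)_{i=1}^n$ with $\sigma_i(t_i)=\sigma_{i+1}(t_i)$ for all $i\in\{1,\ldots,n-1\}$ (these evaluation conditions are closed by Lemma~\ref{actocl} together with Remark~\ref{re}).

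The only genuinely delicate point is the \emph{WLOG} step in the first paragraph: the partition witnessing absolute continuity of $\eta$ need not coincide with the prescribed $P$. To handle this, I would take a common refinement $Q$ of both, apply Proposition~\ref{prop} with respect to $Q$ to realize $\Gamma_{AC}(\eta)$ inside a product over subintervals of $Q$, and then group the subintervals of $Q$ inside each $[t_{i-1},t_i]$: by Proposition~\ref{prop} applied on $[t_{i-1},t_i]$, the corresponding subproduct is canonically homeomorphic to $\Gamma_{AC}(\eta|_{[t_{i-1},t_i]})$. The product of these $n$ homeomorphisms turns the global $\Phi_{\eta,Q}$ into our $\rho$, with closed image determined by the compatibility conditions at the interior points $t_1,\ldots,t_{n-1}$. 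I expect the bookkeeping of this refinement step, rather than any analytic difficulty, to be the main obstacle.
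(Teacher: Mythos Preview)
Your proposal is correct and takes essentially the same approach as the paper: the paper's proof writes $\rho=(\prod_{i=1}^n\Phi_{\eta_i,P_i}^{-1})\circ\Phi_{\eta,P}$ with $P_i=\{t_{i-1},t_i\}$, which is exactly your identity $(\prod_i\psi_i)\circ\rho=\Phi_{\eta,P}$ rewritten, and it gives the same description of the image. If anything, your treatment of the refinement issue (when the prescribed partition $P$ does not itself admit a single chart on each subinterval) is more explicit than the paper's, which tacitly assumes such charts exist.
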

\begin{proof}
We see that $\rho$ is clearly linear. For $i\in \{1,...,n\}$, we denote $P_i=\{t_{i-1},t_i\}$. Since $\eta_i:=\eta|_{[t_{i-1},t_i]} \in AC_{L^p}([t_{i-1},t_i],N)$, we have
\[ \rho= \left(\prod_{i=1}^n \Phi_{\eta_i,P_i}^{-1}\right) \circ \Phi_{\eta,P}.\]
The image is given by the closed subspace
\[ \text{Im}(\rho)= \left\{ (\tau_i)_{i=1}^n \in \prod_{i=1}^n \Gamma_{AC}(\eta|_{[t_{i-1},t_i]}) : \tau_{i}(t_i)=\tau_{i+1}(t_i) \text{ for all }i\in\{1,...,n-1\} \right\}. \]
Thus $(\rho|^{\text{Im}(\rho)})^{-1}: \text{Im}(\rho) \to \Gamma_{AC}(\eta)$ is well defined and 
\[ \left(\rho|^{\text{Im}(\rho)}\right)^{-1} = \Phi_{\eta,P}^{-1} \circ \left( \prod_{i=1}^n \Phi_{\eta_i,P_i}\right). \]
Therefore, $\rho$ is a topological embedding with closed image.
\end{proof}

\begin{prop}\label{propnog}
Let $p\in [1,\infty]$. Let $N$ be a smooth manifold modeled on a sequentially complete locally convex space $E$ and let $g:[c,d]\to [a,b]$ be the map given by
\[ g(t)=a+\frac{t-c}{d-c}(b-a),\quad t\in[c,d].\]
Then $\eta\circ g\in AC_{L^p}([c,d],N)$ for each $\eta\in AC_{L^p}([a,b],N)$. Moreover, if $\eta\in AC_{L^p}([a,b],N)$, then the map
\[ L_g:\Gamma_{AC}(\eta) \to \Gamma_{AC} (\eta\circ g), \quad \sigma \mapsto \sigma\circ g\]
is continuous linear.
\end{prop}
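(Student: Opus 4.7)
The plan is to reduce everything to the already-established reparameterization result in the vector-valued case (Lemma~\ref{lemmanog}) and the characterization of the topology on $\gac$ via the embedding $\Phi_{\eta,P}$ (Proposition~\ref{prop}). The key observation is that $g\colon [c,d]\to[a,b]$ is an affine bijection, so it maps any partition of $[a,b]$ bijectively to a partition of $[c,d]$, and its restrictions to corresponding subintervals are again affine maps of the same form.

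First I would handle the claim that $\eta\circ g\in AC_{L^p}([c,d],N)$. Given $\eta\in AC_{L^p}([a,b],N)$, pick a partition $P=\{t_0,\ldots,t_n\}$ of $[a,b]$ and charts $\{(\varphi_i,U_i)\}$ verifying the definition of absolute continuity for $\eta$. Set $s_i:=g^{-1}(t_i)$ and $Q:=\{s_0,\ldots,s_n\}$, and let $g_i\colon[s_{i-1},s_i]\to[t_{i-1},t_i]$ be the restriction of $g$, which is again an affine map of the form of Lemma~\ref{lemmanog}. Then $(\eta\circ g)([s_{i-1},s_i])=\eta([t_{i-1},t_i])\subseteq U_i$, and
\[
\varphi_i\circ(\eta\circ g)|_{[s_{i-1},s_i]}
=(\varphi_i\circ\eta|_{[t_{i-1},t_i]})\circ g_i
\in AC_{L^p}([s_{i-1},s_i],E)
\]
by Lemma~\ref{lemmanog}. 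So the charts $\{(\varphi_i,U_i)\}$ together with the partition $Q$ verify the definition of absolute continuity for $\eta\circ g$.

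For the second part, linearity of $L_g$ is immediate from the pointwise operations. For $\sigma\in\gac$, applying the first part to the smooth manifold $TN$ and the path $\sigma\in AC_{L^p}([a,b],TN)$ yields $\sigma\circ g\in AC_{L^p}([c,d],TN)$; and $\pi_{TN}\circ(\sigma\circ g)=(\pi_{TN}\circ\sigma)\circ g=\eta\circ g$, so $L_g(\sigma)\in\Gamma_{AC}(\eta\circ g)$. To obtain continuity, use the topological embeddings $\Phi_{\eta,P}$ and $\Phi_{\eta\circ g,Q}$ from Proposition~\ref{prop}: a direct computation gives
\[
(\Phi_{\eta\circ g,Q}\circ L_g)(\sigma)_i
=d\varphi_i\circ(\sigma\circ g)|_{[s_{i-1},s_i]}
=(d\varphi_i\circ\sigma|_{[t_{i-1},t_i]})\circ g_i
=AC_{L^p}(g_i,E)\bigl(\Phi_{\eta,P}(\sigma)_i\bigr),
\]
so that
\[
\Phi_{\eta\circ g,Q}\circ L_g=\left(\prod_{i=1}^n AC_{L^p}(g_i,E)\right)\circ\Phi_{\eta,P}.
\]
Each factor $AC_{L^p}(g_i,E)$ is continuous linear by Lemma~\ref{lemmanog}, and $\Phi_{\eta,P}$ is continuous while $\Phi_{\eta\circ g,Q}$ is a topological embedding, whence $L_g$ is continuous.

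I do not anticipate a serious obstacle: the statement is essentially a transport of Lemma~\ref{lemmanog} from the vector-valued case to the manifold-valued case, the only bookkeeping being the matching of partitions and charts through the bijection $g$. The mildest subtlety is verifying that the same family of charts $\{(\varphi_i,U_i)\}$ can be reused for both $\eta$ and $\eta\circ g$ with the reparameterized partition, which is immediate because $g$ preserves images.
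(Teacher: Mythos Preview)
Your proposal is correct and follows essentially the same approach as the paper: both reduce to Lemma~\ref{lemmanog} on each subinterval via the reparameterized partition $Q=g^{-1}(P)$ with the same charts, and establish continuity of $L_g$ through the commutation $\Phi_{\eta\circ g,Q}\circ L_g=\bigl(\prod_i AC_{L^p}(g_i,E)\bigr)\circ\Phi_{\eta,P}$. The paper additionally checks the endpoint compatibility conditions explicitly to see that $\bigl(\prod_i AC_{L^p}(g_i,E)\bigr)\circ\Phi_{\eta,P}$ lands in $\mathrm{Im}(\Phi_{\eta\circ g,Q})$, but this is redundant once you have already shown $L_g(\sigma)\in\Gamma_{AC}(\eta\circ g)$, as you do.
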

\begin{proof}
Let $\eta \in AC_{L^p}([a,b],N)$. We first show that $\eta\circ g\in AC_{L^p}([c,d],N)$. Let $P=\{t_0,...,t_n\}$ be a partition of $[a,b]$ and $\{(\varphi_i,U_i): i\in \{1,...,n\}\}$ charts of $N$ that verify the definition of absolute continuity for $\eta$. Since $g$ is a strictly increasing function, we can define a partition $Q=\{s_0,...,s_n\}$ of $[c,d]$ such that $g(s_i)=t_i$, for each $i\in \{1,...,n\}$. Since
\[ \varphi_i\circ\left(\eta\circ g\right)|_{[s_{i-1},s_i]}=\left(\varphi_i\circ\eta|_{[t_{i-1},t_i]}\right) \circ g|_{[s_{i-1},s_i]} \]
we have $\eta \circ g \in AC_{L^p}([c,d], N)$ by Lemma \ref{lemmanog}. By the same argument, we have that $\sigma\circ g \in AC_{L^p}([c,d],TN)$ for each $\sigma\in\Gamma_{AC}(\eta)$ and
\[ \pi_{TN}\circ (\sigma\circ g) = \eta\circ g.\] 
Hence $L_g(\sigma) \in \Gamma_{AC}(\eta\circ g)$. It remains to show that $L_g$ is continuous. For each $i\in\{1,...,n\}$ we define the maps
\[ G_i : AC_{L^p}([t_{i-1},t_i], E)\to AC_{L^p}([s_{i-1},s_i], E) ,\quad  \tau\mapsto \tau \circ g|_{[s_{i-1},s_i]}\]
which are linear and continuous by Lemma \ref{lemmanog}. Using the topological embeddings $\Phi_{\eta,P}$ and $\Phi_{\eta\circ g,Q}$ (as in Proposition \ref{prop}), if $(\tau_i)_{i=1}^n \in \text{Im}(\Phi_{\eta,P})$ we have
\begin{align*}
    \tau_i \circ g (s_i) &= \tau_i (t_i) \\
    &= d\varphi_i \circ (T\varphi_{i+1})^{-1} \Big(\varphi_{i+1}\circ\eta(t_{i}),\tau_{i+1}(t_{i})\Big) \\
    &= d\varphi_i \circ (T\varphi_{i+1})^{-1} \Big(\varphi_{i+1}\circ\eta\circ g(s_{i}),\tau_{i+1}\circ g(s_{i})\Big)
\end{align*}
for each $i\in \{1,...,n-1\}$. If $G=(G_1\times ...\times G_n)$, then $G$ is continuous linear and 
\[ \text{Im}\left(G \circ \Phi_{\eta,P}\right)\subseteq \text{Im}(\Phi_{\eta\circ g,Q}).\] 
Therefore
\[ L_g = \Phi_{\eta\circ g,Q}^{-1}\circ G \circ \Phi_{\eta,P}. \]
is continuous linear.
\end{proof}

\begin{prop}\label{gammaeval}
Let $p\in [1,\infty]$ and $N$ be a smooth manifold modeled on a sequentially complete locally convex space $E$. If $\eta\in AC_{L^p}([a,b],N)$, then the evaluation map
\[ \epsilon:\Gamma_{AC}(\eta)\times [a,b]\to TN,\quad(\sigma,t)\mapsto \sigma(t) \]
is continuous and linear in the first argument. Moreover, for each $t \in [a,b]$ fixed, the point evaluation map
\[ \epsilon_{t}:\Gamma_{AC}(\eta) \to TN,\quad \sigma \mapsto \sigma(t) \]
is smooth.
\end{prop}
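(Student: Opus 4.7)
The strategy is to work locally using the topological embedding $\Phi_{\eta,P}$ from Proposition \ref{prop}, which identifies $\gac$ with a closed subspace of $\prod_{i=1}^n AC_{L^p}([t_{i-1},t_i],E)$ via a family of charts $\charts$ verifying the definition of absolute continuity for $\eta$. For each $j$ and $t\in[t_{j-1},t_j]$, the definition of $\gac$ forces $\sigma([t_{j-1},t_j])\subseteq TU_j$, so
\[
\sigma(t)=(T\varphi_j)^{-1}\bigl(\varphi_j(\eta(t)),\,h_j(\sigma)(t)\bigr),
\]
where $h_j\colon\gac\to AC_{L^p}([t_{j-1},t_j],E)$ is the continuous linear map from Definition \ref{deftop}.

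For the continuity of $\epsilon$, I would first establish that the evaluation map
\[
\mathrm{ev}\colon AC_{L^p}([t_{j-1},t_j],E)\times[t_{j-1},t_j]\to E,\quad (\tau,s)\mapsto\tau(s),
\]
is continuous. This follows from the continuity of the inclusion $AC_{L^p}([t_{j-1},t_j],E)\to C([t_{j-1},t_j],E)$ noted earlier, combined with the usual continuity of evaluation for the compact-open topology on $C$-spaces with a locally compact Hausdorff domain. Composing this with the continuity of $h_j$, the continuity of $t\mapsto\varphi_j(\eta(t))$, and the smoothness of the chart diffeomorphism $(T\varphi_j)^{-1}\colon V_j\times E\to TU_j$ produces a continuous map on each closed strip $\gac\times[t_{j-1},t_j]$ that agrees with $\epsilon$ there. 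At an interior partition point $t_j$ the two available chart representations evaluate to the same element of $T_{\eta(t_j)}N$, so the Gluing Lemma applied to the finite closed cover $\{\gac\times[t_{j-1},t_j]\}_{j=1}^n$ yields joint continuity of $\epsilon$ on $\gac\times[a,b]$.

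Linearity in the first argument is immediate from the pointwise operations on $\gac$: values at a fixed $t$ all live in the vector space $T_{\eta(t)}N$, and addition/scalar multiplication on $\gac$ are defined pointwise. For the smoothness of the point evaluation $\epsilon_t$ with $t\in[t_{j-1},t_j]$, I would factor $\epsilon_t$ as
\[
\sigma\longmapsto h_j(\sigma)\longmapsto h_j(\sigma)(t)\longmapsto\bigl(\varphi_j(\eta(t)),\,h_j(\sigma)(t)\bigr)\stackrel{(T\varphi_j)^{-1}}{\longmapsto}\sigma(t).
\]
The first map is continuous linear by definition of the topology on $\gac$; the second is continuous linear (and a continuous linear map between locally convex spaces is automatically $C^\infty$); the third is affine with $\varphi_j(\eta(t))$ a fixed constant; and the last is a restriction of the $C^\infty$-diffeomorphism $(T\varphi_j)^{-1}$. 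Hence $\epsilon_t$ is smooth as a composition of smooth maps. The main point of care in the whole argument is the gluing at the partition endpoints in the continuity step; this is handled cleanly because $\sigma(t_j)\in T_{\eta(t_j)}N$ is a well-defined element of $TN$ independent of the chart used to compute it.
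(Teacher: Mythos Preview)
Your argument is correct. The paper, however, takes a much shorter route: it notes that the inclusion $J_\Gamma\colon\gac\to\Gamma_C(\eta)$ is continuous linear (Remark~\ref{re}) and then simply composes with the already-known results for the continuous-section space $\Gamma_C(\eta)$, namely that the evaluation $\tilde{\epsilon}\colon\Gamma_C(\eta)\times[a,b]\to TN$ is continuous and each point evaluation $\tilde{\epsilon}_t\colon\Gamma_C(\eta)\to TN$ is smooth (citing \cite{AGS}). Thus $\epsilon=\tilde{\epsilon}\circ(J_\Gamma\times\text{id}_{[a,b]})$ and $\epsilon_t=\tilde{\epsilon}_t\circ J_\Gamma$, and both claims follow at once. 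Your chart-by-chart approach with gluing is more self-contained---it does not defer to an external reference for $\Gamma_C(\eta)$ and essentially reproves the continuous case along the way---whereas the paper's approach is more economical given that the $\Gamma_C$ result is available off the shelf.
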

\begin{proof}
The evaluation map
\[  \tilde{\epsilon}: \Gamma_C(\eta)\times [a,b]\to TN, \quad (\sigma, t)\mapsto \sigma(t) \]
is continuous and the evaluation map $ \tilde{\epsilon}_t : \Gamma_C (\eta)\to TN$, $\sigma\mapsto \sigma(t)$ is smooth for each $t\in [a,b]$ (see \cite{AGS}). Then $\epsilon = \tilde{\epsilon} \circ (J_\Gamma\times\text{Id}_\R)$ and $\epsilon_t = \tilde{\epsilon}_t \circ J_\Gamma$, where $J_\Gamma:\Gamma_{AC}(\eta) \to \Gamma_C (\eta)$ is the inclusion map, which is continuous linear by Remark \ref{re}.
\end{proof}

\section{Manifolds of absolutely continuous functions}
\begin{definition}\label{localaddition}
Let $N$ be a smooth manifold and $\pi_{TN}:TN\to N$ its tangent bundle. A \textit{local addition} is a smooth map ${\Sigma:\Omega\to N}$ defined on an open neighborhood $\Omega \subseteq TN$ of the zero-section ${0_N := \{ 0_p \in T_p N : p\in N\}}$ such that
\begin{itemize}
\item[a)] $\Sigma(0_p)=p$ for all $p\in N$.
\item[b)] The image $\Omega^\prime:= \big( \pi_{TN},\Sigma\big) (\Omega)$ is open in $N\times N$ and the map 
\begin{equation*}
    \theta_N:\Omega\to \Omega^\prime,\quad v\mapsto \big( \pi_{TN}(v),\Sigma(v) \big)
\end{equation*}
is a $C^\infty$-diffeomorphism.
\end{itemize}
Moreover, if $T_{0_p}(\Sigma |_{T_p N})=id_{T_p N}$ for all $p\in N$, we say that the local addition $\Sigma$ is \textit{normalized}. We denote the local addition as the pair  $(\Omega, \Sigma)$.\\
If $\theta_N:\Omega\to \Omega^\prime$ is a diffeomorphism of $\mathbb{K}$-analytic manifolds, we call ${\Sigma:\Omega\to N}$ a $\mathbb{K}$-analytic local addition.
\end{definition}

\begin{remark} \label{locallemma}
Let $N$ be a smooth manifold with local addition $(\Omega,\Sigma)$. We denote the tangent bundle of $TN$ as $\pi_{T(TN)}:T(TN)\to TN$. If $\kappa:T(TN)\to T(TN)$ is the canonical flip, then $T\Sigma\circ \kappa: T(T\Omega)\to TN$ it is a local addition on $TN$ \cite[Lemma A.11]{AGS}. Moreover, each manifold which admits a local addition also admits a normalized local addition \cite[Lemma A.14]{AGS}.\\ 
From now on, we assume that each local addition is normalized.
\end{remark}

\begin{remark}\label{recharts}
Let $N$ be a smooth manifold modeled on a sequentially complete locally convex space $E$, $p\in [1,\infty]$ and $\eta \in AC_{L^p}([a,b],N)$. We define the sets
\begin{equation*}
    \mathcal{V}_\eta := \{ \sigma \in \Gamma_{AC}(\eta) : \sigma([a,b])\subseteq \Omega \}
\end{equation*}
which is open in $\Gamma_{AC}(\eta)$ by Remark \ref{re} and 
\begin{equation*}
    \mathcal{U}_\eta := \{ \gamma \in AC_{L^p}([a,b],N) : (\eta,\gamma)([a,b]) \subseteq \Omega' \}.
\end{equation*}
Lemma \ref{pre1} enables us to define the map
\begin{equation*}
    \Psi_\eta: \mathcal{V}_\eta  \to \mathcal{U}_\eta,\quad \sigma  \mapsto \Sigma\circ \sigma.
\end{equation*}
Sith inverse given by
\begin{equation*}
    \Psi_\eta^{-1} :\mathcal{U}_\eta \to \mathcal{V}_\eta,\quad \gamma \mapsto \theta_N^{-1}\circ (\eta, \gamma).
\end{equation*}
\end{remark}

\noindent
The following lemma is just an application of \cite[Lemma 10.1]{Ber} to our particular case.
\begin{lemma}\label{Fclosed}
Let $E$ and $F$ sequentially complete locally convex spaces, $U\subseteq E$ open and $f:U\to F$ a map. If $F_0\subseteq F$ is a closed vector subspace and $f(U)\subseteq F_0$, then $f:U\to F$ is smooth if and only if $f|^{F_0}:U\to F_0$ is smooth.
\end{lemma}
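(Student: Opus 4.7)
The plan is to reduce both implications to two elementary facts about a closed vector subspace $F_0$ of a locally convex space $F$: the inclusion $\iota\colon F_0\hookrightarrow F$ is continuous and linear, hence smooth; and because $F_0$ carries the subspace topology, a map whose image lies in $F_0$ is continuous into $F_0$ if and only if it is continuous into $F$. The ``if'' direction is then immediate, since writing $f=\iota\circ (f|^{F_0})$ exhibits $f$ as a composition of smooth maps.

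For the ``only if'' direction, I would proceed by induction on $k\in \N\cup\{0\}$, proving the auxiliary statement that whenever $g\colon U\to F$ is a $C^k$-map with $g(U)\subseteq F_0$, the corestriction $g|^{F_0}\colon U\to F_0$ is $C^k$. The base case $k=0$ is just the subspace-topology observation above. For $k=1$, fix $(x,y)\in U\times E$ and note that each difference quotient $\tfrac{1}{t}(g(x+ty)-g(x))$ lies in $F_0$ since $F_0$ is a vector subspace; as $F_0$ is closed in $F$, the limit
\[
dg(x,y)=\lim_{t\to 0}\tfrac{1}{t}(g(x+ty)-g(x))
\]
belongs to $F_0$ as well. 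Hence $dg\colon U\times E\to F$ is continuous with image contained in $F_0$, and the base case applied to $dg$ yields continuity of $(dg)|^{F_0}$; this is precisely what is needed for $g|^{F_0}$ to be $C^1$.

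The inductive step then follows by bootstrapping: if $g$ is $C^{k+1}$ into $F$ with values in $F_0$, the $k=1$ argument shows $dg$ takes values in $F_0$ and is $C^k$ into $F$, so the inductive hypothesis applied to $dg$ gives that $(dg)|^{F_0}$ is $C^k$, and therefore $g|^{F_0}$ is $C^{k+1}$. Taking $k$ arbitrary delivers smoothness. There is no substantial obstacle; the one point requiring attention is the closedness of $F_0$, which is precisely what traps the directional derivative inside $F_0$. The entire argument could equivalently be packaged by invoking \cite[Lemma 10.1]{Ber}, which is the general form of the same fact.
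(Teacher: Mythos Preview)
Your proposal is correct and aligns with the paper's treatment: the paper does not give a proof at all but simply records the lemma as ``just an application of \cite[Lemma 10.1]{Ber} to our particular case,'' which is exactly the reference you invoke at the end. Your induction argument is the standard unpacking of that cited result, so there is nothing to compare---you have supplied the details the paper deliberately omits.
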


\begin{theorem} \label{teomanifold}
Let $p\in [1,\infty]$. For each smooth manifold~$N$ 
modeled on a sequentially complete
locally convex space $E$ which admits a local addition, the set $AC_{L^p}([a,b],N)$ 
admits a smooth manifold structure
such that the sets ${\mathcal U}_\eta$ are open in $AC_{L^p}([a,b],N)$
for all $\eta\in AC_{L^p}([a,b],N)$
and $\Psi_\eta\colon {\mathcal V}_\eta\to {\mathcal U}_\eta$
is a $C^\infty$-diffeomorphism.
\end{theorem}

\begin{proof}
We endow $AC_{L^p}([a,b],N)$ with the final topology with respect to the family of maps 
\[ \Psi_\eta\colon {\mathcal V}_\eta\to {\mathcal U}_\eta,\quad \text{for each }\eta\in AC_{L^p}([a,b],N).\]
Analogously, for each $\eta\in C([a,b],N)$, we define the maps $\Psi_\eta^C\colon {\mathcal V}_\eta^C\to {\mathcal U}_\eta^C$ on the space of continuous functions $C([a,b],N)$ as in Remark \ref{recharts}. Then the final topology on $C([a,b],N)$ with respect to these maps coincides with its compact-open topology and the inclusion map
\[ J: AC_{L^p}([a,b],N) \to C([a,b],N), \quad \gamma \mapsto \gamma\]
is continuous. Moreover, for each $\eta \in AC_{L^p}([a,b],N)$ the set 
\[ \mathcal{U}_{\eta}^C := \{ \gamma \in C([a,b],N) : (\eta,\gamma)([a,b]) \subseteq \Omega' \}\]
is open in $C([a,b],N)$, whence
\[ \mathcal{U}_\eta = \mathcal{U}_\eta^C \cap AC_{L^p}([a,b],N) \]
is open in $AC_{L^p}([a,b],N)$. Consequently, $AC_{L^p}([a,b],N)$ is Hausdorff.\\
The goal is to make the family $\{ (\Psi_\eta^{-1},\mathcal{U}_\eta) : \eta \in AC_{L^p}([a,b],N) \}$ an atlas for $AC_{L^p}([a,b],N)$ for a smooth manifold structure on $AC_{L^p}([a,b],N)$. We need to show that the charts are compatible, i.e., that the smoothness of each transition map 
\begin{equation*}\label{compatiblecharts}
\Lambda_{\xi,\eta} := \Psi_\xi^{-1} \circ \Psi_\eta : \Psi_\eta^{-1}(\mathcal{U}_\eta \cap \mathcal{U}_\xi)\subseteq \Gamma_{AC}(\eta) \to \Gamma_{AC}(\xi) 
    , \quad \sigma \mapsto \theta_N^{-1} \circ (\xi, \Sigma \circ \sigma ),
\end{equation*}
for each $\eta, \xi \in AC_{L^p}([a,b],N)$ such that the open set 
\[ \Psi_\eta^{-1}(\mathcal{U}_\eta \cap \mathcal{U}_\xi) = \left(\Psi_\eta^{C}\right)^{-1}(\mathcal{U}_\eta^C \cap \mathcal{U}_\xi^C) \cap \Gamma_{AC}(\eta)\]
is not empty. \\
Let $R=\{t_0,...,t_n\}$ be a partition of $[a,b]$, let $\{(\varphi_i,U_{\varphi_i}): i\in\{1,...,n\}\}$ and $\{(\phi_i,U_{\phi_i}): i\in\{1,...,n\}\}$ be charts that verify the definition of absolute continuity for $\eta$ and $\xi$, respectively. For each $\sigma\in AC_{L^p}([a,b],TN)$, we denote $\sigma_i := \sigma|_{[t_{i-1},t_i]}$ and 
\[ \funcion(\sigma_i):= \funcion(\sigma)\big|_{[t_{i-1},t_i]}.  \]
By Lemma \ref{Fclosed}, the smoothness of $\funcion$ is equivalent to the smoothness of the composition
\begin{align*}
     \Phi_{\xi,R} \circ \funcion: \Psi_\eta^{-1}(\mathcal{U}_\eta \cap \mathcal{U}_\xi) &\to \text{Im}(\Phi_{\xi,R})\subset \prod_{i=1}^n AC_{L^p}([t_{i-1},t_i],E) \\
     \sigma &\mapsto \left( d\phi_i \circ \funcion (\sigma_i ) \right)_{i=1}^n.
\end{align*}
For each $i\in\{1,...,n\}$, we denote $\eta_i:=\eta|_{[t_{i-1},t_i]}$ and $\xi_i:=\xi|_{[t_{i-1},t_i]}$ and we have
\begin{align*}
    d{\phi}_i \circ \funcion|_{[t_{i-1},t_i]} (\sigma_i ) &= d\phi_i \circ \theta_N^{-1} \circ (\xi_i, \Sigma \circ \sigma_i ) \\ 
    &= d\phi_i \circ \theta_N^{-1} \circ (\phi_i^{-1} \left( \phi_i\circ\xi_i \right), \Sigma\circ T\varphi_i^{-1} \circ T\varphi_i \circ \sigma_i )\\
    &= d\phi_i \circ \theta_N^{-1} \circ (\phi_i^{-1} \left( \phi_i\circ\xi_i \right), \Sigma\circ T\varphi_i^{-1} \left( \varphi_i \circ \eta_i , d\varphi_i \circ \sigma_i \right) ).
 \end{align*}
Because all of the functions involved are continuous and have an open domain, also the composition
\begin{equation}
H_i(x,y,z) := d\phi_i \circ \theta_N^{-1} \circ (\phi_i^{-1} \left( x \right), \Sigma\circ T\varphi_i^{-1} ( y , z ) ) 
\end{equation}  
has an open domain $\mathcal{O}_i$. Hence the map $H_i: \mathcal{O}_i \to E$ is smooth. By Lemma \ref{Acts}, the map
\[  AC_{L^p}([t_{i-1},t_i],H_i) :AC_{L^p}([t_{i-1},t_i],  \mathcal{O}_i) \to AC_{L^p}([t_{i-1},t_i],E) \quad \alpha \mapsto H_i \circ \alpha \]
is smooth. Using the identification of products of $AC_{L^p}$ spaces (Remark \ref{times}), if we fix the functions
$\phi_i\circ\xi_i$ and $\varphi_i \circ \eta_i$, we have the continuous linear map
\[ AC_{L^p}([t_{i-1},t_i],E)\to AC_{L^p}([t_{i-1},t_i],E\times E\times E),\quad \tau\mapsto (\phi_i\circ\xi_i, \varphi_i \circ \eta_i,\tau).\]
We write $W_i$ for the preimage of $AC_{L^p}([t_{i-1},t_i],\mathcal{O}_i)$ under this map. Then the map
\[ \Theta_i :W_i \to AC_{L^p}([t_{i-1},t_i],E),\quad \tau \mapsto H_i\circ (\phi_i\circ\xi_i, \varphi_i \circ \eta_i ,  \tau ) \]
is also smooth. Since the maps $h_i:\Gamma_{AC}(\eta)\to AC_{L^p}([t_{i-1},t_i],E), \sigma \mapsto d\varphi_i \circ \sigma_i$ are continuous by definition of the topology, rewriting we have
\[ \Phi_{\xi,R} \circ \funcion (\sigma) = \left( \Theta_i \circ h_i (\sigma) \right)_{i=1}^n \]
for each $\sigma \in \Psi_\eta^{-1}(\mathcal{U}_\eta \cap \mathcal{U}_\xi)$. Since each $\Theta_i$ is smooth and $h_i$ is continuous linear, the map $\Phi_{\xi,R} \circ \funcion$ is smooth an by Lemma \ref{Fclosed}, so is $\funcion$.
\end{proof}

\noindent
Proceeding in the same way, using the fact that compositions of $\mathbb{K}$-analytic maps are $\mathbb{K}$-analytic and the analytic version of Lemma \ref{Fclosed} (see \cite{GL1}), we obtain the following analytic analogue of Theorem \ref{teomanifold}.
\begin{coro} \label{manalytic}
For each $\mathbb{K}$-analytic manifold~$N$
modeled on a sequentially complete locally convex space $E$ which admits a $\mathbb{K}$-analytic local addition and $p\in [1,\infty]$, the set $AC_{L^p}([a,b],N)$ 
admits a $\mathbb{K}$-analytic manifold structure 
such that the sets ${\mathcal U}_\eta$ are open in $AC_{L^p}([a,b],N)$
for all $\eta\in AC_{L^p}([a,b],N)$
and $\Psi_\eta\colon {\mathcal V}_\eta\to {\mathcal U}_\eta$
is a $\mathbb{K}$-analytic diffeomorphism.
\end{coro}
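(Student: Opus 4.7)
The plan is to mirror the proof of Theorem \ref{teomanifold} step by step, swapping ``smooth'' for ``$\mathbb{K}$-analytic'' wherever it appears. The topology on $AC_{L^p}([a,b],N)$ and the verification that each $\mathcal{U}_\eta$ is open rely only on continuity (of the inclusion into $C([a,b],N)$ and of $\theta_N$) and do not depend on the choice of differentiability class, so those portions of the argument transfer unchanged. The real content is to show that every transition map
\[
\funcion=\Psi_\xi^{-1}\circ\Psi_\eta\colon \Psi_\eta^{-1}(\mathcal{U}_\eta\cap\mathcal{U}_\xi)\to\Gamma_{AC}(\xi),\quad \sigma\mapsto\theta_N^{-1}\circ(\xi,\Sigma\circ\sigma)
\]
is $\mathbb{K}$-analytic.

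First I would appeal to the $\mathbb{K}$-analytic analogue of Lemma \ref{Fclosed}, available from \cite{GL1}, to reduce $\mathbb{K}$-analyticity of $\funcion$ with values in $\Gamma_{AC}(\xi)$ to $\mathbb{K}$-analyticity of $\Phi_{\xi,R}\circ\funcion$ with values in the ambient product $\prod_{i=1}^n AC_{L^p}(\inttj,E)$, exploiting that $\mathrm{Im}(\Phi_{\xi,R})$ is a closed vector subspace. Then, picking the same partition $R=\{t_0,\dots,t_n\}$ and chart families as in the smooth proof, I would rewrite the $i$-th component as $H_i\circ(\phi_i\circ\xi_i,\varphi_i\circ\eta_i,d\varphi_i\circ\sigma_i)$, where
\[
H_i(x,y,z):=d\phi_i\circ\theta_N^{-1}\bigl(\phi_i^{-1}(x),\,\Sigma\circ T\varphi_i^{-1}(y,z)\bigr)
\]
is defined on an open set $\mathcal{O}_i$.

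The next key observation is that $H_i$ is $\mathbb{K}$-analytic: by hypothesis $\theta_N$ is a $\mathbb{K}$-analytic diffeomorphism and $\Sigma$ is $\mathbb{K}$-analytic, while the chart transitions of the $\mathbb{K}$-analytic atlas of $N$ are $\mathbb{K}$-analytic, and these are the only ingredients in $H_i$. Applying Lemma \ref{Analy} in place of Lemma \ref{Acts}, the pushforward $AC_{L^p}(\inttj,H_i)$ is $\mathbb{K}$-analytic; pre-composing with the continuous linear (hence $\mathbb{K}$-analytic) insertion $\tau\mapsto(\phi_i\circ\xi_i,\varphi_i\circ\eta_i,\tau)$ and with the continuous linear $h_i$ yields $\mathbb{K}$-analyticity of each component of $\Phi_{\xi,R}\circ\funcion$, and hence of $\funcion$ itself. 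The bijections $\Psi_\eta$ are then $\mathbb{K}$-analytic diffeomorphisms onto the $\mathcal{U}_\eta$.

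The only point requiring care—and what I view as the main (though essentially cosmetic) obstacle—is confirming that the categorical tools used in the smooth proof all have clean $\mathbb{K}$-analytic counterparts valid for sequentially complete locally convex spaces: the closed-subspace reduction (provided by the analytic version of Lemma \ref{Fclosed} cited from \cite{GL1}), the preservation of $\mathbb{K}$-analyticity under composition and under the $AC_{L^p}$-pushforward (Lemma \ref{Analy}), and the fact that continuous linear maps are $\mathbb{K}$-analytic. Once these are in hand, no new analytic input is needed and the proof is a translation of Theorem \ref{teomanifold}.
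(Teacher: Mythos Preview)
Your proposal is correct and follows essentially the same approach as the paper, which simply remarks that one proceeds as in Theorem~\ref{teomanifold} using the $\mathbb{K}$-analytic version of Lemma~\ref{Fclosed} from \cite{GL1} and the fact that compositions of $\mathbb{K}$-analytic maps are $\mathbb{K}$-analytic. You have identified precisely the right substitutions (Lemma~\ref{Analy} in place of Lemma~\ref{Acts}, the analytic closed-subspace lemma, and $\mathbb{K}$-analyticity of continuous linear maps), so nothing further is needed.
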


\begin{prop}
Let $1\leq q\leq p \leq \infty$ and $N$ be a smooth manifold modeled on a sequentially complete locally convex space $E$ which admits a local addition. Then 
\[ AC_{L^\infty}([a,b],N)\subseteq AC_{L^p}([a,b],N)\subseteq AC_{L^q}([a,b],N)\subseteq AC_{L^1}([a,b],N) \]
with smooth inclusion maps.
\end{prop}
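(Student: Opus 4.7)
The plan is to reduce everything to the linear case and invoke the chart description of Theorem~\ref{teomanifold}: the inclusion at the level of vector-valued function spaces is continuous linear, and since the manifold charts on $AC_{L^p}([a,b],N)$ are built from those spaces, the inclusion between the corresponding manifolds inherits the same behavior.

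First, I would establish that for a sequentially complete locally convex space $E$ the linear map
\[
\iota_E\colon AC_{L^p}([a,b],E) \hookrightarrow AC_{L^q}([a,b],E)
\]
is continuous. Via the isomorphism $\Phi$ from Definition~\ref{defABE}, this reduces to continuity of the inclusion $L^p([a,b],E) \hookrightarrow L^q([a,b],E)$, which is a direct consequence of H\"older's inequality applied seminorm-wise to $q\circ\gamma$ (with the finite length of $[a,b]$ absorbing the $p=\infty$ case). The set-theoretic inclusions $AC_{L^\infty}([a,b],N)\subseteq\cdots\subseteq AC_{L^1}([a,b],N)$ are then automatic from the definition of absolute continuity on a manifold: if $\eta\in AC_{L^p}([a,b],N)$ is witnessed by a partition $\{t_0,\ldots,t_n\}$ and charts $\{(\varphi_j,U_j)\}$, then $\varphi_j\circ\eta|_{[t_{j-1},t_j]}\in AC_{L^p}\subseteq AC_{L^q}$ on each piece, so the same data witnesses $L^q$-absolute continuity.

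Second, to obtain smoothness of the inclusion $\iota\colon AC_{L^p}([a,b],N)\to AC_{L^q}([a,b],N)$, I fix $\eta\in AC_{L^p}([a,b],N)$ and read $\iota$ in the charts $\Psi_\eta^{(p)}$ and $\Psi_\eta^{(q)}$ of Theorem~\ref{teomanifold} at $\eta$ in the two manifold structures. Using $\Psi_\eta^{-1}(\gamma)=\theta_N^{-1}\circ(\eta,\gamma)$ and $\Psi_\eta(\sigma)=\Sigma\circ\sigma$ from Remark~\ref{recharts}, a direct calculation gives
\[
(\Psi_\eta^{(q)})^{-1}\circ \iota\circ \Psi_\eta^{(p)}(\sigma) \;=\; \theta_N^{-1}\circ(\eta,\Sigma\circ\sigma) \;=\; \sigma,
\]
so the chart transition is simply the natural linear inclusion from the $\Gamma_{AC}$-space built with $L^p$-sections to the one built with $L^q$-sections. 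The domains satisfy $\mathcal{V}_\eta^{(p)}\subseteq \mathcal{V}_\eta^{(q)}$, since both are cut out by the same open condition $\sigma([a,b])\subseteq\Omega$, independently of $p$.

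Third, continuity of this last inclusion follows from Proposition~\ref{prop}: the topology on each $\Gamma_{AC}$-space is the initial topology with respect to the maps $h_i\colon \sigma\mapsto d\varphi_i\circ\sigma|_{[t_{i-1},t_i]}$ landing in $AC_{L^p}([t_{i-1},t_i],E)$ or $AC_{L^q}([t_{i-1},t_i],E)$ for a common family of charts verifying absolute continuity for $\eta$. Since the $h_i$ for the $L^q$-structure factor as $\iota_E\circ h_i$ for the $L^p$-structure, and $\iota_E$ is continuous by the first step, the inclusion on $\Gamma_{AC}$-spaces is continuous linear, hence smooth. The only bookkeeping concern is compatibility with the open chart domains, which is trivial thanks to the $p$-independent description of $\mathcal{V}_\eta$, so no deep obstruction arises.
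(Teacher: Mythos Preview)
Your proof is correct and follows essentially the same route as the paper: both reduce to the continuous linear inclusion $AC_{L^p}([t_{i-1},t_i],E)\hookrightarrow AC_{L^q}([t_{i-1},t_i],E)$ on the pieces and then use Proposition~\ref{prop} to conclude that the chart expression of the inclusion (the identity on $\Gamma_{AC}(\eta)$) is continuous linear, hence smooth. You spell out the chart computation $(\Psi_\eta^{(q)})^{-1}\circ\iota\circ\Psi_\eta^{(p)}=\mathrm{id}$ explicitly, which the paper leaves implicit, but the argument is the same.
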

\begin{proof}
Let $\eta \in AC_{L^p}([a,b],N)$. Let $\{t_0,...,t_n\}$ be a partition of $[a,b]$ and $\{(\varphi_i,U_i): i\in \{1,...,n\}\}$ be charts of $N$ that verify the definition of absolute continuity for $\eta$. \\
By \cite[Remark 3.2]{Nin1}, we know that each inclusion map $AC_{L^p}([t_{i-1},t_i],E)\to AC_{L^q}([t_{i-1},t_i],E)$ is continuous linear, hence $AC_{L^p}([a,b],N)\subseteq AC_{L^q}([a,b],N)$. Since the map
\[ \prod_{i=1}^n AC_{L^p}([t_{i-1},t_i],E)\to \prod_{i=1}^n AC_{L^q}([t_{i-1},t_i],E),\quad (\gamma_i)_{i=1}^n \mapsto (\gamma_i)_{i=1}^n. \]
is continuous linear, by Proposition \ref{prop}, the inclusion $AC_{L^p}([a,b],N)\to AC_{L^q}([a,b],N)$ is smooth. Applying the same argument to each consecutive inclusion in the chain gives the result.
\end{proof}

\begin{theorem}\label{propfon}
Let $p\in [1,\infty]$ and $k\in \N_0\cup\{\infty\}$. Let $M$ and $N$ be smooth manifolds modeled on sequentially complete locally convex spaces which admit a local addition. If $f:M\to N$ is a $C^{k+2}$-map, then the map
\[ AC_{L^p}([a,b],f):AC_{L^p}([a,b],M)\to AC_{L^p}([a,b],N),\quad \eta \mapsto f\circ \eta \]
is $C^k$.
\end{theorem}
\begin{proof}
The map is well defined by Lemma \ref{pre1}. Let $E_M$ and $E_N$ be the modeling spaces and $(\Omega_M,\Sigma_M)$ and $(\Omega_N,\Sigma_N)$ be local additions on $M$ and $N$ respectively.\\
Let $\eta \in AC_{L^p}([a,b],M)$. We consider charts $(\mathcal{U}_\eta, \Psi_\eta^{-1})$ and $(\mathcal{U}_{f\circ\eta}, \Psi_{f\circ\eta}^{-1})$ around $\eta \in AC_{L^p}([a,b],M)$ and $f\circ \eta \in AC_{L^p}([a,b],N)$, respectively. Following the notation of Theorem \ref{teomanifold}, we see that
\[ \Psi_\eta^{-1}\left( \mathcal{U}_\eta \cap AC_{L^p}([a,b],f)^{-1}(\mathcal{U}_{f\circ\eta})\right) = \Gamma_{AC}(\eta) \cap \left(\Psi_\eta^C\right)^{-1}\left( \mathcal{U}_\eta^C \cap AC_{L^p}([a,b],f)^{-1}(\mathcal{U}_{f\circ\eta}^C)\right) \]
is open in $\Gamma_{AC}(\eta)$. If $\theta_N=(\pi_{TN},\Sigma_N)$, then we define
\[  F(\sigma) := \Psi_{f\circ\eta}^{-1} \circ AC_{L^p}([a,b],f) \circ \Psi_\eta (\sigma) = \theta_N^{-1} \circ \Big((f\circ \eta) , (f\circ \Sigma_M) \circ \sigma \Big)\in \Gamma_{AC}(f\circ \eta), \]
for all $\sigma \in \Psi_\eta^{-1}\left( \mathcal{U}_\eta \cap AC_{L^p}([a,b],f)^{-1}(\mathcal{U}_{f\circ\eta})\right)$.\\ 
Proceeding as in the proof of Theorem \ref{teomanifold}, choosing the corresponding partition $P=\{t_0,...,t_n\}$ of $[a,b]$ and the families of charts $\{(\varphi_i,U_{\varphi_i}): i\in \{1,...,n\}\}$ and $\{({\phi}_i,U_{\phi_i}): i\in \{1,...,n\}\}$ that verify the definition of absolute continuity for $\eta$ and $f\circ\eta$ respectively.\\ 
We denote $\sigma_i:=\sigma|_{[t_{i-1},t_i]}$, $\eta_i:=\eta|_{[t_{i-1},t_i]}$ and
\[ F(\sigma)|_{[t_{i-1},t_i]} := F(\sigma_i). \]
We will study the $C^k$-regularity of the map
\[
   \Phi_{f\circ \eta}\circ F: \Psi_\eta^{-1}\left( \mathcal{U}_\eta \cap AC_{L^p}([a,b],f)^{-1}(\mathcal{U}_{f\circ\eta})\right) \to \text{Im}(\Phi_{f\circ \eta}),\quad \sigma \mapsto \left( d\phi_i \circ  F (\sigma_i ) \right)_{i=1}^n,  \]
where $\Phi_{f\circ \eta}$ is the linear topological embedding as in Proposition \ref{prop}. For each $i\in\{1,...,n\}$ and $\sigma \in \Psi_\eta^{-1}\left( \mathcal{U}_\eta \cap AC_{L^p}([a,b],f)^{-1}(\mathcal{U}_{f\circ\eta})\right)$ we have
\begin{align*}
d\phi_i\circ F (\sigma)|_{[t_{i-1},t_i]} &= d\phi_i\circ\theta_N^{-1} \circ \Big(f\circ \eta_i , f\circ \Sigma_M \circ \sigma_i \Big) \\
&= d\phi_i\circ\theta_N^{-1} \circ \Big(\phi_i^{-1}\circ \phi_i \circ f\circ \eta_i , f\circ \Sigma_M \circ T\varphi_i^{-1} \left( \varphi_i \circ \eta_i , d\varphi_i \circ \sigma_i \right) \Big).
\end{align*}
Because all of the functions involved are continuous and have an open domain, also the composition
\begin{equation}
H_i(x,y,z) := d\phi_i\circ\theta_N^{-1} \circ \Big(\phi_i^{-1} (x) , f\circ \Sigma_M \circ T\varphi_i^{-1} (y,z)\Big) 
\end{equation}  
has an open domain $\mathcal{O}_i$ in $E_N\times E_M\times E_M$. Hence the map $H_i: \mathcal{O}_i \to E_N$ is a $C^{k+2}$-map. For the absolutely continuous functions $\phi_i \circ f\circ \eta_i$ and $\varphi_i\circ\eta_i$, we consider the continuous linear map
\[ AC_{L^p}([t_{i-1},t_i],E_M)\to AC_{L^p}([t_{i-1},t_i],E_N\times E_M\times E_M),\quad \tau \mapsto (\phi_i \circ f\circ \eta_i, \varphi_i\circ\eta_i,\tau)\]
and we write $W_i$ for the preimage of $AC_{L^p}([t_{i-1},t_i],\mathcal{O}_i)$ under this map. Then the map
\[ \Theta_i :W_i\to AC_{L^p}([t_{i-1},t_i],E_N) \quad
\tau \mapsto H_i\circ  (\phi_i \circ f\circ \eta_i, \varphi_i\circ\eta_i, \tau ),\]
is $C^k$ by Lemma \ref{Acts}. Since the maps $h_i:\Gamma_{AC}(\eta) \to AC_{L^p}([t_{i-1},t_i],E_M)$, $\sigma\mapsto d\varphi_i\circ \sigma_i$ are continuous linear by definition, rewriting we have
\[ \Phi_{f\circ \eta}\circ F (\sigma) = \left( \Theta_i \circ h_i(\sigma) \right)_{i=1}^n \]
for each $\sigma \in \Psi_\eta^{-1}\left( \mathcal{U}_\eta \cap AC_{L^p}([a,b],f)^{-1}(\mathcal{U}_{f\circ\eta})\right)$. Since each $\Theta_i$ is $C^k$ and $h_i$ is continuous linear, by Lemma \ref{Fclosed} the map $F$ is $C^k$, and therefore, $AC_{L^p}([a,b],f)$ is $C^k$.
\end{proof}

\noindent
Proceeding in the same way, using the analytic version of Lemma \ref{Fclosed} (see \cite{GL1}), we obtain the following analytic analogue of Theorem \ref{propfon}.

\begin{coro}\label{fanalytic}
Let $p\in [1,\infty]$. Let $M$ and $N$ be smooth manifolds modeled on sequentially complete locally convex spaces which admit a local addition. If $f:M\to N$ be a $\mathbb{K}$-analytic map, then the map
\[ AC_{L^p}([a,b],f):AC_{L^p}([a,b],M)\to AC_{L^p}([a,b],N),\quad \eta \mapsto f\circ \eta \]
is $\mathbb{K}$-analytic.
\end{coro}

\begin{remark}\label{sametopology}
The manifold structures for $AC_{L^p}([a,b],N)$ given by different local additions coincide. Indeed, let us consider the topologies $\tau_1$ and $\tau_2$ given by two different local addition of $N$. Since the identity map $\text{id}_N:N\to N$ is smooth, the map
\[ AC_{L^p}([a,b],\text{id}_N):(AC_{L^p}([a,b],N),\tau_1) \to (AC_{L^p}([a,b],N),\tau_2), \quad \eta \mapsto \text{id}_N \circ\eta \]
is smooth.
\end{remark}

\begin{remark}\label{remark} The inclusion map 
\[ J:AC_{L^p}([a,b],N) \to C([a,b],N),\quad \eta\mapsto \eta\] 
is smooth. Indeed, let $(\mathcal{U}_\eta, \Psi_\eta^{-1})$ and $(\mathcal{U}_{\eta}^C, \left(\Psi_{\eta}^C\right)^{-1})$ be charts around $\eta\in AC_{L^p}([a,b],N)$ and $\eta\in C([a,b],N)$, respectively. Then 
\[ (\Psi_{\eta}^C)^{-1} \circ J\circ \Psi_\eta: \Psi_\eta^{-1}\left(\mathcal{U}_\eta\cap J^{-1}(\mathcal{U}_{\eta}^C)\right)\subseteq \Gamma_{AC}(\eta)\to \Gamma_C (\eta),\quad \sigma\mapsto \sigma\] 
is a restriction of the inclusion map $\Gamma_{AC}(\eta) \to \Gamma_C (\eta)$, which is continuous linear. Therefore $J$ is smooth. Moreover, if $U\subseteq N$ is an open subset, then the manifold structure induced by $AC_{L^p}([a,b],N)$ on the open subset 
\[ AC_{L^p}([a,b],U):=\{ \eta\in AC_{L^p}([a,b],N) : \eta([a,b])\subseteq U \} \]
coincides with the manifold structure on $AC_{L^p}([a,b],U)$ given by the local addition restricted to $U$.
\end{remark} 

\begin{prop}
Let $p\in [1,\infty]$. Let $N_1$ and $N_2$ be smooth manifolds modeled on sequentially complete locally convex spaces which admit a local addition. If $\pr_i:N_1\times N_2\to N_i$ denotes the i-th projection for $i\in \{1,2\}$, then the map
\[ \mathcal{P}:  AC_{L^p}([a,b],N_1 \times N_2)\to AC_{L^p}([a,b],N_1)\times AC_{L^p}([a,b],N_2),\quad \eta \mapsto (\pr_1,\pr_2)\circ \eta \]
is a diffeomorphism.
\end{prop}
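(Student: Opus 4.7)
The plan is to show that $\mathcal{P}$ is a smooth bijection whose inverse is also smooth, by equipping $N_1\times N_2$ with the product local addition so that in charts, $\mathcal{P}$ reduces to the linear homeomorphism $\mathcal{P}_\Gamma\colon \Gamma_{AC}(\eta_1,\eta_2)\to\Gamma_{AC}(\eta_1)\times\Gamma_{AC}(\eta_2)$ established in the previous proposition.

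Smoothness of $\mathcal{P}$ is immediate from Proposition \ref{propfon}: each projection $\pr_i\colon N_1\times N_2\to N_i$ is smooth, so $AC_{L^p}([a,b],\pr_i)$ is smooth, and $\mathcal{P}=(AC_{L^p}([a,b],\pr_1),AC_{L^p}([a,b],\pr_2))$ is smooth as a pair of smooth maps. For bijectivity, given $(\gamma_1,\gamma_2)$ on the right-hand side, I would take a common refinement of partitions witnessing absolute continuity of $\gamma_1$ and $\gamma_2$ together with product charts $(\varphi_{1,i}\times\varphi_{2,i}, U_{1,i}\times U_{2,i})$; these verify $L^p$-absolute continuity of $t\mapsto(\gamma_1(t),\gamma_2(t))$ viewed as a map into $N_1\times N_2$, using Remark \ref{times} to identify $AC_{L^p}$ into a product locally convex space with pairs of $AC_{L^p}$ functions. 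Injectivity is clear pointwise, and $\mathcal{P}^{-1}$ is then given by $(\gamma_1,\gamma_2)\mapsto \bigl(t\mapsto(\gamma_1(t),\gamma_2(t))\bigr)$.

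For smoothness of $\mathcal{P}^{-1}$, I would equip $N_1\times N_2$ with the local addition $\Sigma:=\Sigma_1\times\Sigma_2$ induced by fixed local additions on the two factors, via the natural identification $T(N_1\times N_2)\cong TN_1\times TN_2$; by the remark on the independence of the manifold structure from the choice of local addition, this still yields the smooth structure of Theorem \ref{teomanifold} on $AC_{L^p}([a,b],N_1\times N_2)$. For $\eta=(\eta_1,\eta_2)$, a direct computation using the product structure should give the factorization $\Psi_\eta = (\Psi_{\eta_1}\times\Psi_{\eta_2})\circ \mathcal{P}_\Gamma$ on a neighbourhood of the zero section of $\Gamma_{AC}(\eta)$. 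Consequently the transition $(\Psi_{\eta_1}^{-1}\times\Psi_{\eta_2}^{-1})\circ \mathcal{P}\circ \Psi_\eta$ equals $\mathcal{P}_\Gamma$, which is a linear homeomorphism and hence a diffeomorphism of locally convex spaces.

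The main technical step will be verifying that $\Sigma_1\times\Sigma_2$ really is a (normalized) local addition on $N_1\times N_2$, which amounts to checking that $\theta_{N_1\times N_2} = \theta_{N_1}\times\theta_{N_2}$ after reordering factors in $(N_1\times N_2)^2\cong N_1^2\times N_2^2$, and that the chart factorization above is correct on the level of sections. Both verifications are routine once the product identifications $T(N_1\times N_2)\cong TN_1\times TN_2$ and the definition of the chart $\Psi_\eta$ are unpacked; no further analysis is needed because the remaining smoothness is handed to us by the homeomorphism from the previous proposition.
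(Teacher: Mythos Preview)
Your approach is essentially the same as the paper's: equip $N_1\times N_2$ with the product local addition $\Sigma_1\times\Sigma_2$, obtain smoothness of $\mathcal{P}$ from Proposition~\ref{propfon}, and reduce the chart expression of $\mathcal{P}^{-1}$ to the linear homeomorphism $\Gamma_{AC}(\eta)\cong\Gamma_{AC}(\eta_1)\times\Gamma_{AC}(\eta_2)$ from the preceding proposition. One small slip: the factorization should read $\mathcal{P}\circ\Psi_\eta=(\Psi_{\eta_1}\times\Psi_{\eta_2})\circ\mathcal{P}_\Gamma$ (you dropped the $\mathcal{P}$ on the left), but your stated consequence $(\Psi_{\eta_1}^{-1}\times\Psi_{\eta_2}^{-1})\circ\mathcal{P}\circ\Psi_\eta=\mathcal{P}_\Gamma$ is correct and matches the paper's computation of the inverse chart expression.
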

\begin{proof}
By Remark \ref{sametopology}, if $(\Omega_1,\Sigma_1)$ and $(\Omega_2,\Sigma_2)$ are the local additions on $N_1$ and $N_2$ respectively, then we can assume that the local addition on $N_1\times N_2$ is 
\[ \Sigma:=\Sigma_1\times\Sigma_2: \Omega_1\times \Omega_2 \to N_1\times N_2\]
where $\Omega_1\times \Omega_2 \subseteq TN_1 \times TN_2 \cong T(N_1\times N_2)$.
The map $\mathcal{P}$ is smooth as a consequence of the smoothness of the maps
\[AC_{L^p}([a,b],\pr_i):AC_{L^p}([a,b],N_1 \times N_2)\to AC_{L^p}([a,b],N_i),\]
for each ${i\in\{1,2\}}$. Let $(\mathcal{U}_{\eta_1}\times \mathcal{U}_{\eta_2}, \Psi_{\eta_1}^{-1}\times \Psi_{\eta_2}^{-1})$ and $(\mathcal{U}_\eta, \Psi_\eta^{-1})$ be charts in  $(\eta_1,\eta_2) \in AC_{L^p}([a,b],N_1)\times AC_{L^p}([a,b],N_2)$ and $\mathcal{P}^{-1}(\eta_1,\eta_2)=\eta \in AC_{L^p}([a,b],N_1\times N_2)$  respectively. Since the map
\[ \mathcal{Q}: \Gamma_{AC}(\eta)\to \Gamma_{AC}(\eta_1)\times \Gamma_{AC}(\eta_2),\quad \tau \mapsto (\text{q}_1,\text{q}_2)\circ \tau \]
is an isomorphism of topological vector spaces, where for $i\in\{1,2\}$, the map  
\[ q_i:\Gamma_{AC}(\eta)\to \Gamma_{AC}(\eta_i),\quad \sigma \mapsto \text{pr}_i\circ \sigma\] 
is the $i$-projection, we have
\begin{align*}
  \Psi_\eta^{-1} \circ \mathcal{P}^{-1}\circ  (\Psi_{\eta_1}\times \Psi_{\eta_2}) (\sigma_1,\sigma_2) 
  &= (\pi_{T(N_1\times N_2)}, \Sigma)^{-1}\circ \left( \eta, \mathcal{P}^{-1} \circ (\Sigma_1\times \Sigma_2)(\sigma_1,\sigma_2)\right)\\
  &= (\pi_{T(N_1\times N_2)}, \Sigma)^{-1}\circ \left( \eta, \Sigma\circ \mathcal{Q}^{-1}(\sigma_1,\sigma_2) \right)\\
  &= \mathcal{Q}^{-1}(\sigma_1,\sigma_2)
\end{align*}
for all  $(\sigma_1,\sigma_2) \in (\Psi_{\eta_1}^{-1}\times \Psi_{\eta_2}^{-1}) \left( \mathcal{U}_{\eta_1}\times \mathcal{U}_{\eta_2} \cap \mathcal{P}(\mathcal{U}_{\eta})\right)$. Hence $\mathcal{P}^{-1}$ is smooth.
\end{proof}

\begin{prop}\label{GRembedding}
Let $p\in [1,\infty]$ and $N$ be a smooth manifold modeled on a sequentially complete locally convex space $E$ which admits a local addition. If $P=\{t_0,...,t_n\}$ is a partition of $[a,b]$, then the map
\[ T:AC_{L^p}([a,b],N) \to \prod_{i=1}^n AC_{L^p}([t_{i-1},t_i],N),\quad \eta\mapsto \left( \eta|_{[t_{i-1},t_i]}\right)_{i=1}^n \]
is smooth. Moreover, it is a smooth diffeomorphism onto a submanifold of $\displaystyle\prod_{i=1}^n AC_{L^p}([t_{i-1},t_i],N)$.
\end{prop}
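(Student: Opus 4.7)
The plan is to reduce to the linear statement of Proposition \ref{proprestric} by working in charts coming from a local addition, and then to read off the submanifold structure from the fact that the restriction map in that proposition has closed linear image.

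First, $T$ is well-defined: given $\eta\in\acspace$, any family of charts verifying absolute continuity for $\eta$ can, after refinement, be assumed to have its partition containing $P$, and restriction of this family to each $[t_{i-1},t_i]$ verifies absolute continuity for $\eta_i:=\eta|_{[t_{i-1},t_i]}$. Injectivity of $T$ is clear, and its image consists of the tuples $(\gamma_i)_{i=1}^n$ satisfying the matching conditions $\gamma_i(t_i)=\gamma_{i+1}(t_i)$ for $i=1,\ldots,n-1$.

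Next I fix a local addition $\Sigma\colon\Omega\to N$ and $\eta\in\acspace$, obtaining the chart $\Psi_\eta\colon\mathcal{V}_\eta\to\mathcal{U}_\eta$ at $\eta$ and charts $\Psi_{\eta_i}\colon\mathcal{V}_{\eta_i}\to\mathcal{U}_{\eta_i}$ at each $\eta_i$. Using $\Sigma\circ(\sigma|_{[t_{i-1},t_i]})=(\Sigma\circ\sigma)|_{[t_{i-1},t_i]}$, a direct computation gives
\[
\Bigl(\prod_{i=1}^n\Psi_{\eta_i}^{-1}\Bigr)\circ T\circ\Psi_\eta(\sigma)=\bigl(\sigma|_{[t_{i-1},t_i]}\bigr)_{i=1}^n
\]
for $\sigma$ in an open subset of $\mathcal{V}_\eta$. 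This is precisely the map $\rho$ of Proposition \ref{proprestric}, which is continuous linear, so $T$ is smooth as a map into the product manifold $\prod_{i=1}^n AC_{L^p}([t_{i-1},t_i],N)$.

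For the submanifold and diffeomorphism statements I apply the second half of Proposition \ref{proprestric}: $\rho$ has closed image in $\prod_{i=1}^n\Gamma_{AC}(\eta_i)$ and restricts to a linear homeomorphism onto it. It follows that the product chart takes $\text{Im}(T)\cap\prod_{i=1}^n\mathcal{U}_{\eta_i}$ onto the open subset $\text{Im}(\rho)\cap\prod_{i=1}^n\mathcal{V}_{\eta_i}$ of the closed subspace $\text{Im}(\rho)$, exhibiting $\text{Im}(T)$ as a submanifold of $\prod_{i=1}^n AC_{L^p}([t_{i-1},t_i],N)$, and in these charts $T$ reads as the linear homeomorphism $\rho|^{\text{Im}(\rho)}$. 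Hence $T$ is a diffeomorphism onto its image. The main technical point is the bookkeeping of open sets — one must verify that $T(\mathcal{U}_\eta)=\text{Im}(T)\cap\prod_{i=1}^n\mathcal{U}_{\eta_i}$ — but this follows from Proposition \ref{proprestric}, since the matching condition at the partition points is built into the definition of $\text{Im}(\rho)$.
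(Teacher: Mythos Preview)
Your proof is correct and follows essentially the same approach as the paper: both compute the local representation of $T$ in the charts $\Psi_\eta$, $\prod_i\Psi_{\eta_i}$ as the restriction map $\sigma\mapsto(\sigma|_{[t_{i-1},t_i]})_{i=1}^n$, and then use that this map is a linear topological embedding onto the closed matching-condition subspace to read off smoothness, the submanifold property, and the diffeomorphism. The only cosmetic difference is that you invoke Proposition~\ref{proprestric} by name, whereas the paper re-derives its content in situ by introducing the subspace $K$ and checking directly that $\Psi_{\gamma_i}^{-1}(\xi_i)(t_i)=\Psi_{\gamma_{i+1}}^{-1}(\xi_{i+1})(t_i)$ via $\theta_N^{-1}$.
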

\begin{proof}
It is clear that the map $T$ is well defined and injective. Let $\text{Im}(T)$ be the image of the map $T$. Then
\[ \text{Im}(T)= \{ (\gamma_i)_{i=1}^n \in \prod_{i=1}^n AC_{L^p}([t_{i-1},t_i],N) : \gamma_{i}(t_i)=\gamma_{i+1}(t_i) \text{ for all }i\in\{1,...,n-1\} \}. \]
Let $\gamma:=(\gamma_i)_{i=1}^n \in \prod_{i=1}^n AC_{L^p}([t_{i-1},t_i],N)$. For each $i\in \{1,...,n\}$ let $\Psi_{\gamma_i}^{-1}:\mathcal{U}_i \to  \mathcal{V}_i$ be charts around $\gamma_i$. Then the map
\[ \Psi_\gamma^{-1}:=\prod_{i=1}^n \Psi_{\gamma_i}^{-1} : \prod_{i=1}^n \mathcal{U}_i \to \prod_{i=1}^n \mathcal{V}_i \]
is a chart around $\gamma$. Let $\eta\in AC_{L^p}([a,b],N)$ and $\tilde{\eta}=T(\eta)$, then $\Psi_{\tilde{\eta}}^{-1}\circ T \circ \Psi_\eta$ is just restriction of the product of the restrictions of the smooth maps
\[ \Gamma_{AC}(\eta) \to \prod_{i=1}^n \Gamma_{AC}(\eta_i) ,\quad \sigma\mapsto \left( \sigma|_{[t_{i-1},t_i]}\right)_{i=1}^n \]
thus $T$ is smooth. It remains to show that $\text{Im}(T)$ is a submanifold.\\ 
Let $\gamma =(\gamma_i)_{i=1}^n \in \text{Im}(T)$ with charts as before, then for each  $i\in \{1,...,n-1\}$ and\\ $
\xi=\left( \xi_j\right)_{j=1}^n \in \text{Im}(T)\cap \prod_{j=1}^n \mathcal{U}_j$ we have
\begin{align*}
\Psi_{\gamma_i}^{-1}(\xi_i)(t_i) &= \theta_N^{-1} \circ (\gamma_i,\xi_i)(t_i) \\
&= \theta_N^{-1} \circ (\gamma_i(t_i),\xi_i(t_i))\\
&= \theta_N^{-1} \circ (\gamma_{i+1}(t_i),\xi_{i+1}(t_i))\\
&= \Psi_{\gamma_{i+1}}^{-1}(\xi_{i+1})(t_i).
\end{align*}
This implies that if $K$ denotes the vector space
\[ K:=\{ \left( \sigma_i \right)_{i=1}^n \in \prod_{i=1}^n \Gamma_{AC}(\gamma_i) : \sigma_{i}(t_i)=\sigma_{i+1}(t_i) \text{ for all }i\in\{1,...,n-1\}\}. \]
Then $\Psi_{\gamma}^{-1}|_{\mathcal{U}_\gamma \cap \text{Im}(T)}: \text{Im}(T) \cap \prod_{i=1}^n \mathcal{U}_i  \to K\cap \prod_{i=1}^n \mathcal{V}_i$ is a chart of $\text{Im}(T)$, making $\text{Im}(T)$ a smooth submanifold and the map $\tilde{T}:AC_{L^p}([a,b],N)\to \text{Im}(T)$, $\eta\mapsto T(\eta)$ a diffeomorphism.
\end{proof}

\begin{remark}
Let $N$ be a $\mathbb{K}$-analytic manifold modeled on a sequentially complete locally convex space which admits a $\mathbb{K}$-analytic local addition and $p\in [1,\infty]$. Since every continuous linear operator is analytic, the isomorphism
\[ \Gamma_{AC}(\eta) \to K,\quad \sigma\mapsto \left( \sigma|_{[t_{i-1},t_i]}\right)_{i=1}^n \]
is $\mathbb{K}$-analytic, which implies that $T$  in Proposition \ref{GRembedding} is a $\mathbb{K}$-analytic diffeomorphism onto the submanifold $\text{Im}(T)$.
\end{remark}

\begin{prop}
Let $p\in [1,\infty]$. Let $N$  be a smooth manifold modeled on a sequentially complete locally convex space $E$ which admits a local addition and let $g:[c,d]\to [a,b]$ be the map given by 
\[ g(t)=a+\frac{t-c}{d-c}(b-a),\quad t\in [c,d].\]
Then the map
\[ AC_{L^p}(g,N):AC_{L^p}([a,b],N)\to AC_{L^p}([c,d],N), \quad \eta \mapsto \eta\circ g\]
is smooth.
\end{prop}
\begin{proof}
By Proposition \ref{propnog} we know that the map is well defined. Let $(\mathcal{U}_\eta, \Psi_\eta^{-1})$ and $(\mathcal{U}_{\eta\circ g}, \Psi_{\eta\circ g}^{-1})$ be charts around $\eta \in AC_{L^p}([a,b],N)$ and $\eta\circ g \in AC_{L^p}([c,d],N)$ respectively. Then we have
\[ \Psi_{\eta\circ g}^{-1} \circ AC_{L^p}(g,N) \circ \Psi_\eta (\sigma) = 
 \theta_N^{-1}\circ (\eta\circ g, \Sigma \circ (\sigma\circ g )) \]
for all $\sigma \in \Psi_\eta^{-1}\left( \mathcal{U}_\eta \cap AC_{L^p}(g,N)^{-1}(\mathcal{U}_{\eta\circ g})\right)$. This set coincides with 
\[ \Psi_\eta^{-1}\left( \mathcal{U}_\eta \cap C(g,N)^{-1}(\mathcal{U}_{\eta\circ g}^C)\right) \]
which is open by the continuity of the map $C(g,N)$.\\
Let $\alpha=\eta\circ g :[c,d]\to N$ and $\tau=\sigma\circ g:[c,d]\to TN$. Then both are absolutely continuous, with $\pi_{TN} \circ \tau =\alpha$, hence $\tau \in \Gamma_{AC}(\alpha)$. Moreover, we have
\begin{align*}
\Psi_{\eta\circ g}^{-1} \circ AC_{L^p}(g,N) \circ \Psi_\eta (\sigma) &= 
 \theta_N^{-1}\circ (\alpha, \Sigma \circ \tau ) \\
 &= \Psi_{\alpha}^{-1}\circ \Psi_\alpha (\tau) \\
 &= \tau \\
 &= \sigma \circ g.
\end{align*}
Hence, $\Psi_{\eta\circ g}^{-1} \circ AC_{L^p}(g,N) \circ \Psi_\eta$ is a restriction of the map 
\[ {\Gamma_{AC}(\eta)}\to {\Gamma_{AC}(\eta\circ g)},\quad \sigma\mapsto \sigma\circ g\] 
which is continuous linear by Proposition \ref{propnog}. Therefore, $AC_{L^p}(g,N)$ is smooth.
\end{proof}

\begin{prop}
Let $p\in [1,\infty]$ and $k\in \N_0\cup\{\infty\}$. Let $M$, $N$ and $L$ be smooth manifolds modeled on sequentially complete locally convex spaces which admit a local addition. If $f:L\times M \to N$ is a $C^{k+2}$-map and $\gamma\in AC_{L^p}([a,b],L)$ is fixed, then
\[ f_* : AC_{L^p}([a,b],M) \to AC_{L^p}([a,b],N),\quad 	\eta\mapsto f\circ (\gamma,\eta)\]
is a $C^k$-map.
\end{prop}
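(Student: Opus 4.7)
The plan is to factor $f_*$ as a composition of maps whose regularity has already been established, avoiding any new chart-level computation. Consider the map
\[
\iota_\gamma\colon AC_{L^p}([a,b],M)\to AC_{L^p}([a,b],L)\times AC_{L^p}([a,b],M),\quad \eta\mapsto(\gamma,\eta),
\]
which is affine (a constant term plus a continuous linear map), hence smooth. Compose it with the inverse of the diffeomorphism
\[
\mathcal{P}\colon AC_{L^p}([a,b],L\times M)\to AC_{L^p}([a,b],L)\times AC_{L^p}([a,b],M),\quad \mu\mapsto(\mathrm{pr}_1,\mathrm{pr}_2)\circ\mu,
\]
established in the previous proposition; since $\mathcal{P}^{-1}$ is smooth, $\mathcal{P}^{-1}\circ\iota_\gamma$ is smooth.

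Next, invoke Proposition \ref{propfon} applied to the $C^{k+2}$-map $f\colon L\times M\to N$: the induced map
\[
AC_{L^p}([a,b],f)\colon AC_{L^p}([a,b],L\times M)\to AC_{L^p}([a,b],N),\quad \mu\mapsto f\circ\mu
\]
is $C^k$. Chaining these together,
\[
f_* = AC_{L^p}([a,b],f)\circ\mathcal{P}^{-1}\circ\iota_\gamma,
\]
because for $\eta\in AC_{L^p}([a,b],M)$ we have $\mathcal{P}^{-1}\circ\iota_\gamma(\eta)=(\gamma,\eta)$ as a path into $L\times M$, and $f$ composed with this path is $f\circ(\gamma,\eta)$. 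The composition of a smooth map with a $C^k$-map is $C^k$, so $f_*$ is $C^k$.

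The only subtle point is the well-definedness of $\iota_\gamma$: one must check that $(\gamma,\eta)$ is an element of $AC_{L^p}([a,b],L\times M)$, but this is precisely the content of the identification furnished by $\mathcal{P}$, so no extra work is needed. The argument goes through identically in the $\mathbb{K}$-analytic setting, replacing Proposition \ref{propfon} by Corollary \ref{fanalytic}, provided $L,M,N$ admit $\mathbb{K}$-analytic local additions and $f$ is $\mathbb{K}$-analytic.
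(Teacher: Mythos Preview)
Your proof is correct and follows the same approach as the paper, which also factors $f_*$ as $AC_{L^p}([a,b],f)\circ C_\gamma$ with $C_\gamma(\eta)=(\gamma,\eta)$ after identifying $AC_{L^p}([a,b],L)\times AC_{L^p}([a,b],M)$ with $AC_{L^p}([a,b],L\times M)$. One minor point: calling $\iota_\gamma$ ``affine'' is not quite apt since $AC_{L^p}([a,b],M)$ and $AC_{L^p}([a,b],L)$ are manifolds rather than vector spaces; the map is smooth simply because its two components (a constant map and the identity) are smooth.
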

\begin{proof}
Define the smooth map
\[ C_\gamma: AC_{L^p}([a,b],N) \to AC_{L^p}([a,b],L) \times AC_{L^p}([a,b],N), \quad \eta\mapsto (\gamma,\eta).\]
Identifying  $AC_{L^p}([a,b],L) \times AC_{L^p}([a,b],M)$ with $AC_{L^p}([a,b],L\times M)$, we have 
\[f_* = AC_{L^p}([a,b],f)\circ C_\gamma.\] 
Hence, since $AC_{L^p}([a,b],f)$ is $C^k$, $f_*$ is $C^k$.
\end{proof}

\begin{prop}
Let $p\in [1,\infty]$. Let $N$ be a smooth manifold modeled on a sequentially complete locally convex space $E$ which admits a local addition. Then the evaluation map
\[ \varepsilon:AC_{L^p}([a,b],N)\times [a,b]\to N,\quad (\eta,t)\mapsto \eta(t)\]
is continuous and for $t\in [a,b]$ fixed, the point evaluation map 
\[\varepsilon_t:AC_{L^p}([a,b],N) \to N,\quad \eta\mapsto \eta(t)\] 
is smooth.
\end{prop}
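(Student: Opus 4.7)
The plan is to reduce each of the two claims to a result already established at the level of tangent sections and continuous maps. For continuity of the full evaluation map $\varepsilon$, I would factor it through $C([a,b],N)\times [a,b]$. By Remark \ref{remark} the inclusion $J\colon AC_{L^p}([a,b],N)\to C([a,b],N)$ is continuous, and the evaluation map
\[
\tilde{\varepsilon}\colon C([a,b],N)\times [a,b]\to N,\quad (\gamma,t)\mapsto \gamma(t)
\]
is continuous when $C([a,b],N)$ is equipped with the compact-open topology (a standard fact, cf.\ \cite{AGS}). Then $\varepsilon=\tilde{\varepsilon}\circ (J\times \mathrm{id}_{[a,b]})$ is continuous as a composition of continuous maps.

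For the smoothness of $\varepsilon_t$ it suffices to check smoothness on a neighborhood of each $\eta\in AC_{L^p}([a,b],N)$. I would use the chart $\Psi_\eta\colon \mathcal{V}_\eta\to \mathcal{U}_\eta$ from Theorem \ref{teomanifold}. For $\sigma\in \mathcal{V}_\eta\subseteq \Gamma_{AC}(\eta)$ we have, by the very definition of $\Psi_\eta$,
\[
\varepsilon_t(\Psi_\eta(\sigma))=(\Sigma\circ\sigma)(t)=\Sigma\bigl(\epsilon_t(\sigma)\bigr),
\]
where $\epsilon_t\colon \Gamma_{AC}(\eta)\to TN$ is the point evaluation from Proposition \ref{gammaeval}. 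Since $\sigma([a,b])\subseteq \Omega$, the point $\epsilon_t(\sigma)$ lies in the open domain $\Omega$ of the local addition, so we may legitimately apply $\Sigma$. Consequently
\[
\varepsilon_t\big|_{\mathcal{U}_\eta}=\Sigma\circ \epsilon_t\circ \Psi_\eta^{-1}
\]
on the chart domain $\mathcal{U}_\eta$, which exhibits $\varepsilon_t$ as a composition of smooth maps: $\Psi_\eta^{-1}$ is a $C^\infty$-diffeomorphism, $\epsilon_t$ is smooth by Proposition \ref{gammaeval}, and $\Sigma\colon \Omega\to N$ is smooth by hypothesis on the local addition. As $\eta$ was arbitrary and the charts $\mathcal{U}_\eta$ cover $AC_{L^p}([a,b],N)$, smoothness of $\varepsilon_t$ follows globally.

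There is no serious obstacle in this argument; the only point requiring attention is verifying that the composition $\Sigma\circ\epsilon_t$ is well-defined on $\mathcal{V}_\eta$, which is immediate from the definition $\mathcal{V}_\eta=\{\sigma\in\Gamma_{AC}(\eta):\sigma([a,b])\subseteq\Omega\}$. Everything else is simply assembling continuity/smoothness results proved earlier.
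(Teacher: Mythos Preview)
Your proof is correct. For continuity of $\varepsilon$ you take exactly the paper's route: factor through $C([a,b],N)\times [a,b]$ via the inclusion $J$.

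For smoothness of $\varepsilon_t$ your argument differs from the paper's. The paper simply writes $\varepsilon_t=(\varepsilon_c)_t\circ J$, where $(\varepsilon_c)_t\colon C([a,b],N)\to N$ is the point evaluation on the manifold of continuous maps (smooth by the cited reference) and $J\colon AC_{L^p}([a,b],N)\to C([a,b],N)$ is smooth by Remark~\ref{remark}. You instead work in a chart and factor $\varepsilon_t\circ\Psi_\eta=\Sigma\circ\epsilon_t$, invoking Proposition~\ref{gammaeval} for the smoothness of $\epsilon_t\colon\Gamma_{AC}(\eta)\to TN$. Both arguments are short and valid; the paper's version is more uniform (both claims reduce to the same factorisation through $C([a,b],N)$), while yours avoids appealing to the smooth manifold structure on $C([a,b],N)$ and the smoothness of its point evaluation, using only the linear-level result Proposition~\ref{gammaeval} together with the local addition~$\Sigma$.
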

\begin{proof}
The evaluation map
\[ \varepsilon_c:C([a,b],N)\times [a,b]\to N, \quad (\eta,t)\mapsto \eta(t)\] 
is continuous and the point evaluation $(\varepsilon_c)_t:C([a,b],N)\to N$, $\eta\mapsto \eta (t)$ is smooth for each $t\in[a,b]$ (see \cite{ASm}). Since the inclusion map $J:AC_{L^p}([a,b],N)\to C([a,b],N)$ is smooth, the result follows from the observation that $\varepsilon=\varepsilon_c\circ (J\times\text{Id}_{[a,b]})$ and $\varepsilon_t=(\varepsilon_c)_t \circ J$ for each $t\in [a,b]$.
\end{proof}


\begin{prop}
Let $p\in [1,\infty]$. Let $N$ be a smooth manifold modeled on a sequentially complete locally convex space $E$ which admits a local addition. For each $q\in N$, the function $\zeta_q:[a,b]\to N$, $t\mapsto q$ is absolutely continuous and the map
\[ \zeta:N\to AC_{L^p}([a,b],N), \quad q\mapsto \zeta_q \]
is a smooth topological embedding.
\end{prop}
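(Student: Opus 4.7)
I would proceed in three steps: first verify that $\zeta_q$ is absolutely continuous, then prove smoothness using the charts of Theorem~\ref{teomanifold}, and finally deduce the topological embedding property from the existence of a continuous left inverse.

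For the first step, fix $q\in N$ and choose any chart $\varphi\colon U\to V$ with $q\in U$. With the trivial partition $P=\{a,b\}$ we have $\zeta_q([a,b])=\{q\}\subseteq U$ and $\varphi\circ \zeta_q$ is the constant function with value $\varphi(q)$; this is clearly in $AC_{L^p}([a,b],E)$ (its weak derivative is $[0]$), so $\zeta_q\in \acspace$.

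For smoothness, fix $q_0\in N$ and set $\eta:=\zeta_{q_0}$. The preimage $\zeta^{-1}(\mathcal{U}_\eta)=\{r\in N:(q_0,r)\in \Omega'\}$ is open in $N$ because $\Omega'$ is open. In this neighborhood, the chart representation
\[
\Psi_\eta^{-1}\circ\zeta(r)=\theta_N^{-1}\circ(\zeta_{q_0},\zeta_r)
\]
is the constant function $[a,b]\to TN$ with value $\theta_N^{-1}(q_0,r)\in T_{q_0}N$. Using the trivial partition and a chart $\varphi\colon U\to V$ of $N$ around $q_0$ small enough, the topological embedding $\Phi_{\eta,P}\colon \Gamma_{AC}(\eta)\to AC_{L^p}([a,b],E)$ of Proposition~\ref{prop} sends $\Psi_\eta^{-1}\circ\zeta(r)$ to the constant function with value $d\varphi(\theta_N^{-1}(q_0,r))\in E$. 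This composition factors as
\[
r\;\longmapsto\; d\varphi\bigl(\theta_N^{-1}(q_0,r)\bigr)\;\longmapsto\;\bigl(t\mapsto d\varphi(\theta_N^{-1}(q_0,r))\bigr),
\]
where the first arrow is smooth as a composition of the smooth maps $r\mapsto(q_0,r)$, $\theta_N^{-1}$, and $d\varphi$, and the second arrow is the continuous linear inclusion $E\hookrightarrow AC_{L^p}([a,b],E)$ sending a vector to the corresponding constant function (this is continuous linear because under the isomorphism $\Phi$ of Definition~\ref{defABE} the constant function $\mathbf{1}\cdot e$ corresponds to $(e,[0])\in E\times L^p([a,b],E)$). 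Applying Lemma~\ref{Fclosed} with the closed subspace $\mathrm{Im}(\Phi_{\eta,P})$ yields smoothness of $\zeta$ at $q_0$.

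Finally, for the topological embedding property, the continuous point evaluation $\varepsilon_a\colon \acspace\to N$, $\gamma\mapsto \gamma(a)$ (established in the preceding proposition) satisfies $\varepsilon_a\circ \zeta=\mathrm{id}_N$. Hence $\zeta$ is injective and its inverse on the image $\zeta(N)$ equals the restriction of the continuous map $\varepsilon_a$, so $\zeta\colon N\to \zeta(N)$ is a homeomorphism, i.e.\ a topological embedding. The only mildly delicate point in the whole argument is the coordinate computation in the smoothness step; once one notices that constant paths lie in the fiber $T_{q_0}N$ and that constants form a closed subspace of $AC_{L^p}$ naturally isomorphic to $E$, the rest is formal.
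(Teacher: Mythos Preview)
Your proof is correct and follows essentially the same strategy as the paper: compute the chart representation $\Psi_{\zeta_{q_0}}^{-1}\circ\zeta$ as a constant path with value $\theta_N^{-1}(q_0,r)$, factor through the continuous linear inclusion of constants $E\hookrightarrow AC_{L^p}([a,b],E)$, and use $\varepsilon_t\circ\zeta=\mathrm{id}_N$ for the embedding. The only cosmetic difference is that the paper packages the first factor as $AC_{L^p}([a,b],h)\circ\tilde\zeta$ with $h(z)=\theta_N^{-1}(q,\varphi^{-1}(z))$, whereas you route the argument through $\Phi_{\eta,P}$ and Lemma~\ref{Fclosed}; your version is in fact slightly more explicit about why the target is $\Gamma_{AC}(\eta)$ rather than all of $AC_{L^p}([a,b],TN)$.
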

\begin{proof}
Consider the local addition $\Sigma:\Omega\to N$ and $\theta_N:\Omega \to \Omega'$ as in Definition \ref{localaddition}. Let $(U,\varphi)$ be a chart around $q\in N$ such that $\{q\}\times U\subseteq \Omega'$ and $(\mathcal{U}_{\zeta_q},\Psi_{\zeta_q}^{-1})$ be a chart in $\zeta_q\in AC_{L^p}([a,b],N)$. If $x\in \varphi \left(U\cap \zeta^{-1}(\mathcal{U}_{\zeta_q})\right)$, then for each $t\in [a,b]$ we have
\begin{align*}
\Psi_{\zeta_q}^{-1}\circ \zeta\circ \varphi^{-1}(x)(t) &= \theta_N^{-1} \left(\zeta_q(t), \zeta_{\varphi^{-1}(x)}(t)\right) \\
&= \theta_N^{-1}\left(q,\varphi^{-1}(x)\right) \\
&= \theta_N^{-1} \circ (q,\varphi^{-1}\circ \Tilde{\zeta}_x(t)) 
\end{align*}
where $\Tilde{\zeta}_x:[a,b]\to E$, $t\mapsto x$ is the constant function. Since the map 
\[\Tilde{\zeta}:E\to AC_{L^p}([a,b],E),\quad x\mapsto \Tilde{\zeta}_x\]  is continuous linear, setting the smooth map
\[h:\varphi(U)\to TN,\quad z\mapsto \theta_N^{-1} \circ(q,\varphi^{-1}(z))\]
we have 
\[  \Psi_{\zeta_q}^{-1}\circ \zeta\circ \varphi^{-1} = AC_{L^p}([a,b],h) \circ \Tilde{\zeta}|_{\varphi(U)}.  \]
Hence $\zeta$ is smooth. It remains to show that $\zeta$ is a topological embedding. Let $t\in [a,b]$ be fixed, then the map 
\[ \varepsilon_t :\text{Im}(\zeta)\to N,\quad \zeta_q \mapsto \zeta_q(t)\] 
verifies that $\varepsilon_t \circ \zeta :N\to N$, $q\mapsto q$. In consequence, $\left(\zeta|^{\text{Im}(\zeta)}\right)^{-1}=\varepsilon_t$ which is continuous since $\varepsilon_t$ is continuous. Therefore, $\zeta$ is a topological embedding.
\end{proof}

\noindent
Following more general cases of manifolds of mappings, such as the case of $C^\ell$-maps (with $\ell \geq 0$) from a compact manifold (possibly with rough boundary) to a smooth manifold which admits a local addition (see e.g. \cite{AGS, Kri}), we will study the tangent bundle of the manifold of absolutely continuous functions. 

\begin{remark}
Let $p\in [1,\infty]$ and $N$ be a smooth manifold modeled on a sequentially complete locally convex space $E$ which admits a local addition. We denote the tangent bundle of $AC_{L^p}([a,b],N)$ as $TAC_{L^p}([a,b],N)$. Since for each $t\in [a,b]$, the point evaluation map $\varepsilon_t:AC_{L^p}([a,b],N)\to N$ is smooth, we have
\[ T\varepsilon_t : TAC_{L^p}([a,b],N) \to TN.\] 
For each $v\in TAC_{L^p}([a,b],N)$ we define the function
\[ \Theta_N(v) : [a,b] \to TN,\quad \Theta_N(v)(t) = T\varepsilon_t (v).\]
\end{remark}

\begin{prop}
    Let $p\in [1,\infty]$ and $N$ be a smooth manifold modeled on a sequentially complete locally convex space $E$ which admits a local addition. If $\eta\in AC_{L^p}([a,b],N)$, then $\Theta_N(v)\in \Gamma_{AC}(\eta)$ for each $v\in T_\eta AC_{L^p}([a,b],N)$ and the map
    \[ \Theta_\eta : T_\eta AC_{L^p}([a,b],N)\to \Gamma_{AC}(\eta),\quad v\mapsto \Theta_\eta(v):=\Theta_N(v) \]
    is an isomorphism of topological vector spaces.
\end{prop}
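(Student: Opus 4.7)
The plan is to exhibit $\Theta_\eta$ as the inverse of the differential of the chart $\Psi_\eta$ at the base point, so that the isomorphism property is automatic. Because $\Sigma(0_p)=p$ for all $p\in N$, the zero-section $0_\eta\colon t\mapsto 0_{\eta(t)}$ will lie in $\mathcal V_\eta$ with $\Psi_\eta(0_\eta)=\eta$. Since $\Psi_\eta\colon \mathcal V_\eta\to \mathcal U_\eta$ is a $C^\infty$-diffeomorphism between open subsets of the topological vector spaces $\Gamma_{AC}(\eta)$ and (locally) $AC_{L^p}([a,b],N)$, its differential
\[
T_{0_\eta}\Psi_\eta\colon \Gamma_{AC}(\eta)\longrightarrow T_\eta AC_{L^p}([a,b],N)
\]
is automatically an isomorphism of topological vector spaces. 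It thus suffices to prove the identity $\Theta_\eta\circ T_{0_\eta}\Psi_\eta=\mathrm{id}_{\Gamma_{AC}(\eta)}$, which will simultaneously verify that $\Theta_N(v)\in\Gamma_{AC}(\eta)$ for every $v\in T_\eta AC_{L^p}([a,b],N)$ and identify $\Theta_\eta$ as $(T_{0_\eta}\Psi_\eta)^{-1}$.

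To establish this identity I would exploit the chart-level relation
\[
\varepsilon_t\circ\Psi_\eta=\Sigma\circ\epsilon_t\big|_{\mathcal V_\eta}\qquad (t\in[a,b]),
\]
where $\epsilon_t\colon \Gamma_{AC}(\eta)\to TN$ is the continuous linear point evaluation from Proposition~\ref{gammaeval}; both sides send $\sigma$ to $\Sigma(\sigma(t))$. Given $\tau\in\Gamma_{AC}(\eta)$, I would differentiate both sides at $\sigma=0_\eta$ in the direction $\tau$. Linearity of $\epsilon_t$ yields that its differential at $0_\eta$ is again $\epsilon_t$, sending $\tau$ to $\tau(t)\in T_{\eta(t)}N$, a vector lying in the fiber over $\eta(t)$; on the left, the composition becomes $T\varepsilon_t(T_{0_\eta}\Psi_\eta(\tau))$. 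Writing $v:=T_{0_\eta}\Psi_\eta(\tau)$, the chain rule gives
\[
\Theta_N(v)(t)=T\varepsilon_t(v)=T_{0_{\eta(t)}}\Sigma(\tau(t)).
\]
Because $\tau(t)\in T_{\eta(t)}N$ is a vertical vector and $\Sigma$ is normalized, so that $T_{0_{\eta(t)}}(\Sigma|_{T_{\eta(t)}N})=\mathrm{id}_{T_{\eta(t)}N}$, the right-hand side equals $\tau(t)$. Hence $\Theta_N(v)=\tau$ as functions $[a,b]\to TN$; in particular $\Theta_N(v)\in\Gamma_{AC}(\eta)$, and the desired identity $\Theta_\eta\circ T_{0_\eta}\Psi_\eta=\mathrm{id}_{\Gamma_{AC}(\eta)}$ holds.

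From this the statement will follow immediately, since $\Theta_\eta=(T_{0_\eta}\Psi_\eta)^{-1}$ is the inverse of an isomorphism of topological vector spaces and hence itself such an isomorphism. I expect the only delicate step to be the careful use of the normalization of $\Sigma$ to reduce $T_{0_{\eta(t)}}\Sigma$ on the vertical direction to the identity of $T_{\eta(t)}N$; once one observes that $\epsilon_t$ already takes values in a single fiber of $TN$ (so only the vertical restriction of $\Sigma$ is seen after composition), the argument collapses to the chain rule together with the fact that continuous linear maps coincide with their own differentials.
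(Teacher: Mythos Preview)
Your proposal is correct and follows essentially the same approach as the paper's own proof: both use the chart $\Psi_\eta$ with $\Psi_\eta(0_\eta)=\eta$, note that $T_{0_\eta}\Psi_\eta$ is an isomorphism of topological vector spaces, and then verify $\Theta_\eta\circ T_{0_\eta}\Psi_\eta=\mathrm{id}_{\Gamma_{AC}(\eta)}$ via the normalization $T_{0_p}(\Sigma|_{T_pN})=\mathrm{id}_{T_pN}$. The only cosmetic difference is that the paper carries out the computation of $T\varepsilon_t\circ T\Psi_\eta(0,\sigma)$ using explicit equivalence classes of curves $[s\mapsto s\sigma]$, whereas you phrase the same step as differentiating the identity $\varepsilon_t\circ\Psi_\eta=\Sigma\circ\epsilon_t|_{\mathcal V_\eta}$; both routes reduce to the same normalization identity.
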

\begin{proof}
    Let $\Sigma:\Omega \to N$ be a normalized local addition of $N$. Since $\Gamma_{AC}(\eta)$ is a vector space, we identify its tangent bundle with $\Gamma_{AC}(\eta)\times \Gamma_{AC}(\eta)$. Let $\Psi_\eta :\mathcal{V}_\eta \to \mathcal{U}_\eta$ be a chart around $\eta$ such that $\Psi_\eta (0) = \eta$, then
    \[ T\Psi_\eta : T\mathcal{V}_\eta  \simeq \mathcal{V}_\eta\times \Gamma_{AC}(\eta) \to T AC_{L^p}([a,b],N) \]
    is a diffeomorphism onto its image. Moreover, 
    \[ T\Psi_\eta : \{0\}\times \Gamma_{AC}(\eta) \to T_\eta AC_{L^p}([a,b],N) \]
    is an isomorphism of topological vector spaces. We will show that 
    \[ \Theta_\eta \circ T\Psi_\eta (0,\sigma) = \sigma \]
    for each $\sigma\in \Gamma_{AC}(\eta)$. This is equivalent to showing that
    \[ T\varepsilon_t \circ T\Psi_\eta (0,\sigma) = \sigma(t) \quad \text{ for all }t\in [a,b].\]
    Using the geometric description of tangent vectors as equivalence classes of curves, we see that $(0,\sigma)$ is equivalent to the curve $[s\mapsto s\sigma]$. Hence, for each $t\in [a,b]$ we have
    \begin{align*}
        T\varepsilon_t \circ T\Psi_\eta (0,\sigma) &= T\varepsilon_t \circ T\Psi_\eta ([s\mapsto s\sigma ]) \\
        &= T\varepsilon_t ([s\mapsto \Psi_\eta(s\sigma)]) \\
        &= T\varepsilon_t ([s\mapsto \Sigma (s\sigma)])\\
        &=[s\mapsto \Sigma|_{T_{\eta(t)}N} (s\sigma(t))] \\
        &= T_0\Sigma|_{T_{\eta(t)}N} ([s\mapsto s\sigma(t)]).
    \end{align*}
Since $\Sigma$ is normalized we have $T_0\Sigma|_{T_{\eta(t)}N}=\text{id}_{T_{\eta(t)}N}$ and
\[ T\varepsilon_t \circ T\Psi_\eta (0,\sigma) = \sigma (t). \]
In consequence, 
\[ \Theta_\eta \circ T\Psi_\eta (0,\sigma) = \sigma. \]
Since $T\Psi_\eta (0,\cdot)$ is an isomorphism, the map $\Theta_\eta$ is injective. Furthermore, for each $\sigma\in \Gamma_{AC}(\eta)$, there exists a $v\in T_\eta AC_{L^p}([a,b],N)$ with $v=T\Psi_\eta (0,\sigma)$ such that
\[ \Theta_\eta (v) = \sigma.\] Moreover, the function
\[ \Theta_\eta(v):[a,b]\to TN,\quad t\mapsto  \Theta_N(v)(t) = \sigma(t)\in T_{\eta(t)}N \]
is in $AC_{L^p}([a,b],TN)$ and verifies $\pi_{TN}\circ \Theta_\eta(v) =\eta$, making the map $\Theta_\eta$ an isomorphism of topological vector spaces. 
\end{proof}

\begin{remark}
By Theorem \ref{propfon}, the map
\[ AC_{L^p}([a,b],\pi_{TN}):AC_{L^p}([a,b],TN)\to AC_{L^p}([a,b],N),\quad \tau \mapsto \pi_{TN}\circ \tau\]
is smooth. Moreover, for $\eta \in AC_{L^p}([a,b],N)$, we have
\[ AC_{L^p}([a,b],\pi_{TN})^{-1}(\{\eta\}) = \Gamma_{AC}(\eta).\]
Then $AC_{L^p}([a,b],\pi_{TN})$ defines a vector bundle on $AC_{L^p}([a,b],N)$, with the local trivializations provided by the charts $\Psi_\eta$. By Theorem \ref{propfon}, we can apply \cite[Theorem A.12]{AGS} to obtain the following.
\end{remark}

\begin{prop}
   Let $p\in [1,\infty]$ and $N$ be a smooth manifold modeled on a sequentially complete locally convex space $E$ which admits a local addition. Then the map
\[ \Theta_N:TAC_{L^p}([a,b],N)\to AC_{L^p}([a,b],TN), \quad v \mapsto \Theta_N(v) \]
is an isomorphism of vector bundles.
\end{prop}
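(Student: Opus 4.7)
The preceding proposition already provides, for every $\eta\in\acspace$, a linear topological isomorphism $\Theta_\eta\colon T_\eta\acspace\to\Gamma_{AC}(\eta)$ between the fibers over $\eta$ of the two bundles in question. As the remark just before the proposition observes that $AC_{L^p}([a,b],\pi_{TN})\colon AC_{L^p}([a,b],TN)\to\acspace$ is a vector bundle with fiber $\Gamma_{AC}(\eta)$ over $\eta$, the map $\Theta_N$ is thereby a fiberwise-linear bijection covering the identity on $\acspace$. To upgrade this to a vector-bundle isomorphism it suffices to show that $\Theta_N$ is a diffeomorphism of total spaces, which I plan to do by a direct local computation in suitable charts.

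For smoothness of $\Theta_N$ at a given $v_0\in T_{\eta_0}\acspace$, I would fix a normalized local addition $\Sigma\colon\Omega\to N$ of $N$ and work in the chart $\Psi_{\eta_0}\colon\mathcal{V}_{\eta_0}\to\mathcal{U}_{\eta_0}$. Its tangent map $T\Psi_{\eta_0}\colon\mathcal{V}_{\eta_0}\times\Gamma_{AC}(\eta_0)\to T\mathcal{U}_{\eta_0}$ trivializes $T\acspace$ over $\mathcal{U}_{\eta_0}$. Since $TN$ also admits a normalized local addition (Remark~\ref{locallemma}), Theorem~\ref{teomanifold} provides a chart on $AC_{L^p}([a,b],TN)$ around $\Theta_N(v_0)$. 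The computation of the preceding proposition, extended from the fiber over $\eta_0$ to a neighborhood in $\mathcal{U}_{\eta_0}$, gives
\[
\Theta_N\bigl(T_\sigma\Psi_{\eta_0}(\tau)\bigr)(t)=T_\sigma(\varepsilon_t\circ\Psi_{\eta_0})(\tau)=T\Sigma\bigl(\sigma(t),\tau(t)\bigr),
\]
for $(\sigma,\tau)\in\mathcal{V}_{\eta_0}\times\Gamma_{AC}(\eta_0)$ and $t\in[a,b]$, where $\tau(t)$ is identified with a vertical tangent vector at $\sigma(t)\in TN$. Thus, after the chart identifications, $\Theta_N$ is the superposition operator associated to the smooth map $T\Sigma$, and Proposition~\ref{propfon} yields its smoothness.

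Applying exactly the same strategy to the smooth inverse $\theta_N^{-1}$, now using the local addition on $TN$ and the chart it induces on $AC_{L^p}([a,b],TN)$, produces a smooth local inverse of $\Theta_N$ of the same superposition-operator type, so $\Theta_N^{-1}$ is smooth as well. The main obstacle I foresee is the bookkeeping of the various tangent-bundle identifications: translating the fiber-by-fiber isomorphism $\Theta_\eta$ into a trivialization of $AC_{L^p}([a,b],\pi_{TN})$ over $\mathcal{U}_{\eta_0}$ that is compatible with the chart on $AC_{L^p}([a,b],TN)$ coming from the local addition on $TN$, and then verifying that the resulting local expression is indeed of the form handled by Proposition~\ref{propfon}. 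Once this setup is in place, the smoothness reduces to previously established results, and combining the diffeomorphism property with fiberwise linearity and coverage of $\mathrm{id}_{\acspace}$ gives the desired vector-bundle isomorphism.
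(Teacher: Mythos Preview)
Your plan is sound and would work. The paper, however, does not carry out this computation: it simply observes (in the remark immediately preceding the proposition) that Proposition~\ref{propfon} supplies exactly the hypothesis needed to invoke \cite[Theorem~A.12]{AGS}, a general result identifying the tangent bundle of a manifold of mappings with the corresponding manifold of mappings into the tangent bundle. So the paper's ``proof'' is a one-line citation, whereas you propose to redo that cited argument by hand in the present setting.

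The content is ultimately the same---both routes rest on the smoothness of superposition operators (Proposition~\ref{propfon})---but the trade-offs differ. Your direct approach is self-contained and makes the mechanism visible; the paper's is much shorter but exports the bookkeeping you rightly flag as the main obstacle to the external reference. One small point to tighten in your sketch: for the inverse you should work with $T\theta_N^{-1}$ (or, equivalently, with the induced local addition on $TN$ built from $T\Sigma$ and the canonical flip, as in \cite[Lemma~A.11]{AGS}) rather than with $\theta_N^{-1}$ itself, since it is $T\theta_N=(T\pi_{TN},T\Sigma)$ that is the relevant diffeomorphism at the level of $T(TN)$. With that adjustment the inverse really is a superposition operator of the same type, and your argument closes.
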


\begin{prop}
    Let $p\in [1,\infty]$. Let $M$ and $N$ be smooth manifolds modeled on sequentially complete locally convex spaces which admits local addition. If $f:M\to N$ is a smooth map, then the tangent map of the map
    \[ AC_{L^p}([a,b],f):AC_{L^p}([a,b],M)\to AC_{L^p}([a,b],N),\quad \eta\mapsto f\circ \eta \]
    is given by 
    \[ TAC_{L^p}([a,b],f) = \Theta_N^{-1}\circ AC_{L^p}([a,b],Tf) \circ \Theta_M.\]
\end{prop}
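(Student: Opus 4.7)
The plan is to verify the identity pointwise via the isomorphisms $\Theta_M$ and $\Theta_N$, exploiting the fact that the point evaluation maps intertwine $AC_{L^p}([a,b],f)$ with $f$ itself. Concretely, for any $t\in [a,b]$ and any $\eta\in AC_{L^p}([a,b],M)$, the definitions give
\[
\varepsilon_t\circ AC_{L^p}([a,b],f)(\eta)=(f\circ\eta)(t)=f(\varepsilon_t(\eta)),
\]
so $\varepsilon_t\circ AC_{L^p}([a,b],f)=f\circ \varepsilon_t$ as smooth maps from $AC_{L^p}([a,b],M)$ to $N$.

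Applying the tangent functor and the chain rule to this identity yields, for every $v\in T\, AC_{L^p}([a,b],M)$,
\[
T\varepsilon_t\bigl(T\,AC_{L^p}([a,b],f)(v)\bigr)=T(f\circ \varepsilon_t)(v)=Tf\bigl(T\varepsilon_t(v)\bigr).
\]
By the previous remark, the left hand side is $\Theta_N\bigl(T\,AC_{L^p}([a,b],f)(v)\bigr)(t)$, while the right hand side equals $Tf(\Theta_M(v)(t))=AC_{L^p}([a,b],Tf)(\Theta_M(v))(t)$. Hence the two functions in $AC_{L^p}([a,b],TN)$ agree for every $t\in[a,b]$:
\[
\Theta_N\circ T\,AC_{L^p}([a,b],f)=AC_{L^p}([a,b],Tf)\circ \Theta_M.
\]
Composing with $\Theta_N^{-1}$ on the left, which is legitimate since $\Theta_N$ is an isomorphism of vector bundles by the previous proposition, gives the claimed formula.

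The only delicate point is the equality $T\varepsilon_t\circ T\,AC_{L^p}([a,b],f)=T(f\circ\varepsilon_t)$, but this is just the chain rule applied to smooth maps; $AC_{L^p}([a,b],f)$ is smooth by Proposition~\ref{propfon}, each $\varepsilon_t$ is smooth by the evaluation proposition above, and $f$ is smooth by assumption. No further ingredient is needed, and there is no real obstacle beyond carefully keeping track of the isomorphisms $\Theta_M$ and $\Theta_N$.
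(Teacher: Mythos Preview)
Your proof is correct and follows essentially the same route as the paper: both establish $\Theta_N\circ T\,AC_{L^p}([a,b],f)=AC_{L^p}([a,b],Tf)\circ \Theta_M$ by intertwining with the point evaluations $\varepsilon_t$ and applying the chain rule. The paper carries this out explicitly in the chart $\Psi_\eta$ using curve representatives of tangent vectors, whereas you invoke the functoriality of $T$ directly on the identity $\varepsilon_t\circ AC_{L^p}([a,b],f)=f\circ\varepsilon_t$, which is a cleaner packaging of the same computation.
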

\begin{proof}
By Theorem \ref{propfon}, we know that $AC_{L^p}([a,b],f)$ is smooth, thus $TAC_{L^p}([a,b],f)$ makes sense. Let us consider the local addition $\Sigma_M:\Omega_M\to M$ and $\eta \in AC_{L^p}([a,b],M)$.\\ 
Let $\Psi_\eta:\mathcal{V}_\eta\to \mathcal{U}_\eta$ be a chart around $\eta$ such that $\Psi_\eta (0)= \eta$. We consider the isomorphism of topological vector spaces
    \[ T\Psi_\eta : \{0\}\times \Gamma_{AC}(\eta) \to T_\eta  AC_{L^p}([a,b],M). \]
For $t\in [a,b]$ we denote the point evaluation in $M$ and $N$ as $\varepsilon_t^M$ and $\varepsilon_t^N$ respectively, then for each $\sigma \in \Gamma_{AC}(\eta)$ we have
\begin{align*}
    T\varepsilon_t^N\circ TAC_{L^p}([a,b], f) \circ T\Psi_\eta (0,\sigma) &= T\varepsilon_t^N\circ TAC_{L^p}([a,b],f) \circ T\Psi_\eta([s\mapsto s\sigma]) \\
    &= T\varepsilon_t^N\circ TAC_{L^p}([a,b],f) ([s\mapsto \Sigma_M (s\sigma)])\\
    &= T\varepsilon_t^N ([s\mapsto f \circ \Sigma_M (s\sigma)])\\
    &= [s\mapsto \varepsilon_t\left(f \circ \Sigma_M (s\sigma)\right)] \\
    &= [s\mapsto f \circ \Sigma_M (s\sigma(t))]\\
    &= Tf \circ T_0\Sigma_M|_{T_{\eta(t)}M} ([s\mapsto s\sigma(t)]) \\
    &= Tf([s\mapsto s\sigma(t)])\\
    &= Tf(\sigma(t))\\
    &= AC_{L^p}([a,b],Tf)\circ T\varepsilon_t^M \circ T\Psi_\eta (0,\sigma).
\end{align*}
Hence
\[ \Theta_N\circ TAC_{L^p}([a,b],f)  =AC_{L^p}([a,b],Tf)\circ \Theta_M.\]
And applying $\Theta_N^{-1}$ on the left gives the result.
\end{proof}

\begin{example}\label{exlie}
Let $p\in [1,\infty]$. If $G$ is a Lie group modeled on a sequentially locally convex space $E$, then we already know that the set $AC_{L^p}([a,b],G)$ is a Lie group (see \cite{GL2, Nin1}).
We will give an alternative proof of this. Let $e\in G$ be the neutral element, let $L_g:G\to G$, $h\mapsto gh$ be the left translation by $g\in G$ and the left action 
\[ G\times TG\to TG,\quad (g,v_h)\mapsto g.v := TL_{g} (v).\]
If $\varphi:U\subseteq G\to V\subseteq T_e G$ is a chart in $e$ such that $\varphi(e)=0$, then the set \[\Omega_\varphi := \bigcup\limits_{g\in G}g.V \subseteq TG\] 
and the map
\[ \Sigma_\varphi :\Omega_\varphi \to G, \quad v\mapsto \pi_{TG}(v)\left( \varphi^{-1}( \pi_{TG}(v)^{-1}.v)\right) \]
defines a local addition for $G$ (see e.g. \cite{KMr}); hence $AC_{L^p}([a,b],G)$ is a smooth manifold with charts constructed with the local addition $(\Omega_\varphi,\Sigma_\varphi)$. Let $\mu_G:G\times G\to G$ and $\lambda_G:G\to G$ be the multiplication map and inversion maps on $G$, respectively. We define the multiplication map $ \mu_{AC}$ and the inversion map $\lambda_{AC}$ on $AC_{L^p}([a,b],G)$ as
\[ \mu_{AC}  := AC_{L^p}([a,b],\mu_G):AC_{L^p}([a,b],G)\times AC_{L^p}([a,b],G)\to AC_{L^p}([a,b],G)\]
and
\[ \lambda_{AC} := AC_{L^p}([a,b],\lambda_G):AC_{L^p}([a,b],G) \to AC_{L^p}([a,b],G).\]
By Lemma \ref{Acts} and Theorem \ref{propfon}, both maps are smooth. \\
We observe that for the neutral element $\zeta_e:[a,b]\to G$, $t\mapsto e$ of $AC_{L^p}([a,b],G)$ we have 
\[ \Gamma_{AC}(\zeta_e) = AC_{L^p} ([a,b],T_e G). \]
If $\Psi_{\zeta_e}^{-1}:\mathcal{U}_{\zeta_e}\to \mathcal{V}_{\zeta_e}$ is a chart in ${\zeta_e}\in AC_{L^p}([a,b],G)$, then we have $\mathcal{U}_{\zeta_e} = AC_{L^p}([a,b],U)$ and ${\mathcal{V}_{\zeta_{e}} = AC([a,b],V)}$. Moreover, we see that
\begin{align*}
\Psi_{\zeta_e} \circ AC_{L^p}([a,b],\varphi) (\eta) &= \Sigma_\varphi \circ (\varphi\circ\eta) \\
&= \pi_{TG}(\varphi\circ\eta)\left( \varphi^{-1}( \pi_{TG}(\varphi\circ\eta)^{-1}.\varphi\circ\eta)	 \right) \\
&= \varphi^{-1} (e.\varphi\circ \eta) \\
&= \eta.
\end{align*}
This enables us to say that for the neutral element ${\zeta_e}\in AC_{L^p}([a,b],G)$ we have a chart given by
\[ AC_{L^p}([a,b],\varphi):AC_{L^p} ([a,b],U)\to AC_{L^p}([a,b],V),\quad \eta \mapsto \varphi \circ \eta. \]
\end{example}

\section{Semiregularity of right half-Lie groups.}
\noindent
    Let $G$ be a topological group with neutral element $e\in G$. For $g\in G$, we denote the left translation of $g$ by $\ell_g:G\to G$, $h\mapsto gh$ and the right translation of $g$ by $\rho_g:G\to G$, $h\mapsto hg$.
    
\begin{definition}
    A sequentially complete locally convex right half-Lie groups $G$ is a topological group with a smooth manifold structure with modeled space in a sequentially complete Hausdorff locally convex space $E$ such that its right translations are smooth. 
\end{definition}

The following proposition is direct application of Proposition \ref{propfon}.

\begin{prop}\label{rightact}
Let $p\in[1,\infty]$ and $G$ be a sequentially complete locally convex right half-Lie group which admits local addition. Let $\eta \in AC_{L^p}([a,b],G)$ and $g\in G$. We define the function
\[ \eta.g(t):= \eta(t)g,\quad \text{for all }t\in [a,b].\]
Then $\eta.g\in AC_{L^p}([a,b],G)$ for each $g\in G$ and the map
\[ AC_{L^p} ([a,b], \rho_g):AC_{L^p}([a,b],G)\to AC_{L^p}([a,b],G), \quad \eta\mapsto \eta.g \]
is smooth.
\end{prop}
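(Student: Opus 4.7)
The plan is very short since the hard work has already been deposited in Proposition~\ref{propfon}. The key observation is that, by definition of a right half-Lie group, for each fixed $g\in G$ the right translation $\rho_g\colon G\to G$ is a smooth map between smooth manifolds modeled on the same sequentially complete locally convex space. Moreover, for every $\eta\in AC_{L^p}([a,b],G)$ and $t\in[a,b]$, the function $\eta.g$ defined pointwise by $\eta.g(t)=\eta(t)g=\rho_g(\eta(t))$ is nothing but the composition $\rho_g\circ\eta$.

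First I would note that $G$ admits a local addition by hypothesis, so the manifold structure on $AC_{L^p}([a,b],G)$ provided by Theorem~\ref{teomanifold} is available. Second, I would apply Lemma~\ref{pre1} (or equivalently the first conclusion of Theorem~\ref{intro fon}) to $f=\rho_g$ to deduce that $\rho_g\circ\eta\in AC_{L^p}([a,b],G)$ for each $\eta\in AC_{L^p}([a,b],G)$; this shows the map $AC_{L^p}([a,b],\rho_g)$ is well defined. Third, I would invoke Proposition~\ref{propfon}: since $\rho_g$ is $C^\infty$, it is in particular $C^{k+2}$ for every $k\in\mathbb{N}\cup\{0,\infty\}$, and therefore
\[
AC_{L^p}([a,b],\rho_g)\colon AC_{L^p}([a,b],G)\to AC_{L^p}([a,b],G),\quad \eta\mapsto \rho_g\circ\eta,
\]
is $C^k$ for all such $k$, hence smooth.

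There is no real obstacle to overcome: the proposition is literally an instantiation of Proposition~\ref{propfon} with $M=N=G$ and $f=\rho_g$, together with the identification $\eta.g=\rho_g\circ\eta$ coming from the definition of the right action on $TG$ restricted to the zero section. All the subtleties (partitions, charts verifying the definition of absolute continuity, closed image conditions in the product of $AC_{L^p}$-spaces) have been absorbed in the proof of Proposition~\ref{propfon}, and the half-Lie structure is used only to ensure that $\rho_g$ is smooth (not $\mu_G$, which need not be smooth on a half-Lie group).
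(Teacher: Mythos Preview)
Your proposal is correct and matches the paper's own treatment exactly: the paper states that this proposition is a direct application of Proposition~\ref{propfon} (with $M=N=G$ and $f=\rho_g$), which is precisely what you do.
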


\begin{remark}\label{multipli}
The smoothness of the map
\[ \mathcal{R}:AC_{L^p}([a,b],G)\times G\to AC_{L^p}([a,b],G),\quad (\eta,g)\mapsto  \eta.g \]
would imply the smoothness of the multiplication map on $G$. Indeed, since the point evaluation map $\varepsilon_a:AC_{L^p}([a,b],G)\to G$, $\eta\mapsto \eta(a)$ and the map $\zeta:G\to AC_{L^p}([a,b],G)$, $g\mapsto [t\mapsto g]$, are smooth (see \cite{Pin}), the multiplication map on $G$ would be smooth as it coincides with the composition
\[ \varepsilon_a\circ \mathcal{R}\circ (\zeta,\text{id}_G) : G\times G \to G, \quad (h,g)\mapsto hg. \]
\end{remark}

\begin{definition}
Let  $p\in[1,\infty]$ and $G$ be a sequentially complete locally convex right half-Lie group and $\eta\in AC_{L^p}([0,1],G)$. Let $\{t_0,...,t_n\}$ be a partition of $[0,1]$, for each $ i\in \{1,...,n\}$, denoting $\eta_i = \eta|_{[t_{i-1},t_i]}$, we define the function
\[ \dot\eta:[0,1]\to TG,\quad t\mapsto \dot\eta(t)=\left\{\begin{array}{ll}
    T\varphi_i^{-1}\left( (\varphi_i\circ \eta_i)(t),(\varphi_i\circ\eta_i)'(t) \right)& \text{, if $t\in [t_{i-1},t_i)$} \\
    T\varphi_n^{-1}((\varphi_n\circ \eta)(b), (\varphi_n\circ \eta)'(b))& \text{, if $t=b$.}
\end{array} \right. \]
\end{definition}


\begin{definition}
 Let  $p\in[1,\infty]$ and $G$ be a sequentially complete locally convex right half-Lie group. If $\eta\in AC_{L^p}([0,1],G)$ and $\gamma\in L^p([0,1],T_e G)$, we consider the function
    \[ \eta.\gamma:[0,1]\to TG,\quad t\mapsto \sigma(\eta(t),\beta(t))=(\eta(t),T_e\rho_{\eta(t)}(\gamma(t)).\] 
    We say that $G$ is $L^p$-semiregular  if for each $\gamma\in L^p([0,1],T_eG)$, there exists an $AC_{L^p}$-Carath\'eodory solution $\eta_\gamma\in AC_{L^p}([0,1],G)$ of the equation 
    \[ \dot\eta=\eta.\gamma,\quad \eta(0)=e.\]
such that the differential equation satisfies local uniqueness of Carath\'eodory solutions in the sense of \cite{GHi}. In this case, we define the evolution map of $G$ by
\begin{equation*}
    \Evol: L^p([0,1],T_eG) \to \acspaceg,\quad  \gamma\mapsto \Evol(\gamma):=\eta_\gamma.
\end{equation*}
Additionally, if $G$ admits a local addition, we say that $G$ is $L^p$-regular if $G$ is $L^p$-semiregular and if the evolution map is smooth.
The definition for the case of $L_{rc}^{\infty}$-semiregularity and $L_{rc}^{\infty}$-regularity is analogous.
\end{definition}

We recall subdivision property \cite[Lemma 2.17]{Nin1}.
\begin{lemma}\label{subdiv}
Let $p\in [1,\infty]$, $E$ be a Hasudorff locally convex space and $\gamma\in L^p([0,1],E)$. For $n\in \N$ and $k\in\{0,1,...,n-1\}$ we define the function
\[ \gamma_{n,k}:[0,1]\to E,\quad \gamma_{n,k}(t):= \frac{1}{n} \gamma\left(\frac{k+t}{n}\right).\]
Then $\gamma_{n,k}\in L^p([0,1],E)$. Moreover, if $q$ is a continuous seminorm on $E$, then
\[ \sup_{k\in\{0,...,n-1\}} q(\gamma_{n,k})\to 0,\quad \text{as }n\to \infty.\]
\end{lemma}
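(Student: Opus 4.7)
The plan is to first reduce the statement to the generating seminorms and then to exploit absolute continuity of the Lebesgue integral. Recall that the locally convex topology on $L^p([0,1],E)$ is defined by the family $\{\|\cdot\|_{L^p,\tilde q}\colon \tilde q\ \text{continuous seminorm on }E\}$. Hence, given any continuous seminorm $q$ on $L^p([0,1],E)$, there exist a continuous seminorm $\tilde q$ on $E$ and $C>0$ with $q\leq C\,\|\cdot\|_{L^p,\tilde q}$. It therefore suffices to establish the convergence for seminorms of the form $\|\cdot\|_{L^p,\tilde q}$.

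First I would check that $\gamma_{n,k}\in \mathcal{L}^p([0,1],E)$. Since $t\mapsto (k+t)/n$ is an affine homeomorphism of $[0,1]$ onto $[k/n,(k+1)/n]$, the Lusin measurability of $\gamma$ transfers to $\gamma_{n,k}$ via pre-composition, and the scalar factor $1/n$ is harmless. The key computation is the change of variables $s=(k+t)/n$:
\begin{equation*}
\|\gamma_{n,k}\|_{L^p,\tilde q}^{\,p}
=\int_0^1 \tfrac{1}{n^p}\,(\tilde q\circ\gamma)\!\left(\tfrac{k+t}{n}\right)^{p} dt
=\frac{1}{n^{p-1}}\int_{k/n}^{(k+1)/n}(\tilde q\circ\gamma)(s)^{p}\,ds,
\end{equation*}
which is finite because $(\tilde q\circ\gamma)^p\in L^1([0,1],\R)$.

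Next I would use this explicit formula to produce the desired convergence. The function $(\tilde q\circ\gamma)^p$ lies in $L^1([0,1],\R)$, so by absolute continuity of the Lebesgue integral, for every $\varepsilon>0$ there exists $\delta>0$ with $\int_A(\tilde q\circ\gamma)^p<\varepsilon$ whenever $\lambda(A)<\delta$. Since the intervals $[k/n,(k+1)/n]$ have length $1/n$, choosing $n>1/\delta$ yields $\sup_k \int_{k/n}^{(k+1)/n}(\tilde q\circ\gamma)^p<\varepsilon$. Hence
\begin{equation*}
\sup_{k\in\{0,\dots,n-1\}}\|\gamma_{n,k}\|_{L^p,\tilde q}
\;\leq\; \frac{1}{n^{(p-1)/p}}\left(\sup_{k}\int_{k/n}^{(k+1)/n}(\tilde q\circ\gamma)^p\,ds\right)^{1/p}
\;\xrightarrow{n\to\infty}\;0,
\end{equation*}
which is the claim; note that even for $p=1$, where the prefactor is $1$, the absolute continuity of the integral alone drives the supremum to zero. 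For $p=\infty$, the corresponding estimate is immediate: $\|\gamma_{n,k}\|_{\mathcal{L}^\infty,\tilde q}\leq \tfrac{1}{n}\,\|\gamma\|_{\mathcal{L}^\infty,\tilde q}\to 0$.

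The only mildly delicate point is the initial reduction from a general continuous seminorm on $L^p([0,1],E)$ to a seminorm of the generating form; the rest is a direct change of variables combined with absolute continuity of the Lebesgue integral, so I expect no substantial obstacle.
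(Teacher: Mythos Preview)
Your proposal is correct. Note that the paper does not actually prove this lemma; it is stated with the attribution ``we will use the subdivision property \cite[Lemma~2.17]{Nin1}'' and no proof is given. Your argument supplies a clean self-contained proof: the reduction to the generating seminorms $\|\cdot\|_{L^p,\tilde q}$ is valid because this family is upward directed (via $\tilde q_1+\tilde q_2$), the change of variables and the appeal to absolute continuity of the Lebesgue integral handle $1\le p<\infty$, and the trivial $1/n$ bound handles $p=\infty$.
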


\begin{lemma}\label{resemiregular}
Let $p\in[1,\infty]$ and $G$ be a right half-Lie group modeled on a sequentially complete locally convex space which admits local addition. If $B\subseteq L^p([0,1],T_eG)$ is a $0$-neighborhood such that for each $\gamma\in B$, there exists $\eta_\gamma=\Evol(\gamma)\in AC_{L^p}([0,1],G)$ such that
    \[ \dot\eta_\gamma=\eta_\gamma.\gamma,\quad \eta_\gamma(0)=e.\]
Then $G$ is $L^p$-semiregular.
\end{lemma}
\begin{proof}
Let $\gamma\in L^p([0,1],T_e G)$. Without loss of generality we assume there exists a continuous seminorm $q$ on $T_e G$ such that
\[ B =\{ \beta\in L^p([0,1],T_e G) : \lVert \beta \lVert_{L^p,q} < 1 \}. \]
By Lemma \ref{subdiv}, there exists $n=n_q\in \N$ such that for each  $k\in\{0,1,...,n-1\}$ we have $\gamma_{n_q,k}\in B$ and the linear map
\[ \alpha_k :L^p([0,1],T_eG)\to L^p([0,1],T_eG),\quad \beta\mapsto \beta_{n,k} \]
is continuous. Therefore, there is exists an open $\gamma$-neighborhood $W_{\gamma}$ such that
\[ \alpha_k ( W_{\gamma})\subseteq B,\quad \forall k\in \{0,...,n-1\}.\]
Let $\beta\in W_{\gamma}$. Considering the partition of $[0,1]$
\[ \left\{ \left[\frac{k}{n},\frac{k+1}{n}\right] : k\in\{0,....,n-1\} \right\},\]
we denote $\eta_k(1)=\Evol(\beta_{n,k})(1)\in G$ and we define the absolutely continuous function (see Proposition \ref{rightact})
\[ \eta_\beta:[0,1]\to G,\quad \left\{ 
\begin{array}{ll}
    \Evol(\beta_{n,0})(nt), & \text{ if $t\in [0,1/n]$,} \\
    \Evol(\beta_{n,k})(nt-k).\Big(\eta_{k-1}(1)\hdots\eta_0(1)\Big), & \text{ if $t\in  [k/n, (k+1)/n]$.}
\end{array}\right.\]
Let $t\in [0,1/n]$, then $\eta_\beta(0)=e$ and
\begin{align*}
    \dot{\eta_\beta}(t) & = n\beta_{n,0}(nt). \Evol(\beta_{n,0})(nt) \\
   &= n\left(\frac{1}{n}\beta\left(\frac{0+nt}{n}\right)\right) . \Evol(\beta_{n,0})(nt)\\
   &= \beta(t).\eta_\beta (t).
\end{align*}
Let $k\in\{1,..,n-1\}$ and $t\in [k/n,(k+1)/n]$, then 
\begin{align*}
     \eta_\beta\left(\frac{k}{n}\right) &= \Evol(\beta_{n,k})\left(n\frac{k}{n}-k\right).\Big(\eta_{k-1}(1)\hdots\eta_0(1)\Big) \\
     &= \eta_{k-1}(1)\hdots\eta_0(1),
\end{align*}
and
\begin{align*}
    \dot{\eta_\beta}(t) &= n \beta_{n,k}(nt-k) \Big(\eta_{k-1}(1)....\eta_0(1)\Big).\Evol(\beta_{n,k})(nt-k) \\
    &= n\left(\frac{1}{n}\beta\left(\frac{k-(nt-k)}{n}\right)\right) \Big(\eta_{k-1}(1)....\eta_0(1)\Big).\Evol(\beta_{n,k})(nt-k) \\
    &= n\left(\frac{1}{n}\beta\left(\frac{k-(nt-k)}{n}\right)\right).\Evol(\beta_{n,k})(nt-k) \Big(\eta_{k-1}(1)....\eta_0(1)\Big) \\
    &= \beta(t). \eta_\beta(t).
\end{align*}
Thus $\eta_\beta=\Evol(\beta)$ exists. In particular, $\eta_\gamma=\Evol(\gamma)$. Therefore, $G$ is $L^p$-semiregular.
\end{proof}

\begin{definition}
    Let $p\in[1,\infty]$ and $G$ be a sequentially complete locally convex right half-Lie group. We consider the set $C([0,1],G)$ of all continuous functions $\gamma:[0,1]\to G$, endowed with the compact-open topology. If $G$ is $L^p$-semiregular, we consider the evolution map with continuous values
    \[ \Evol_C: L^p([0,1],T_eG) \to C([0,1],G),\quad  \gamma\mapsto (J\circ \Evol)(\gamma), \]
    where $J:AC_{L^p}([0,1],G)\to C([0,1],G)$ is the inclusion map.
\end{definition}

The following lemma is just an application of \cite[Proposition 5.25]{GL2} or \cite[Proposition 4.11]{Nin} to our case.
\begin{lemma}\label{regularity}
 Let $p\in [1,\infty]$ and $G$ be a sequentially complete locally convex right half-Lie group which admit a local addition. If $G$ is $L^p$-semiregular and there exists a $0$-neighborhood $B\subseteq L^p([0,1],T_eG)$ such that the restricted evolution map 
 \[ \Evol_C|_{B}:B\to C([0,1],G),\quad \gamma\mapsto \Evol_C(\gamma),\] 
 is continuous, then $\Evol_C$ is continuous. 
\end{lemma}
\begin{proof}
Since $G$ is a topological group, the map 
\[ C([0,1],G)\times C([0,1],G)\to C([0,1],G),\quad (\eta,\xi)\mapsto \eta.(\xi(1)) \]
is continuous. Let $\gamma\in L^p([0,1],T_eG)$, by Lemma \ref{resemiregular}, there exists a $\gamma$-neighborhood $W_\gamma$ such that for each $\beta\in W_\gamma$ we have
\[ \Evol_C(\beta):[0,1]\to G,\quad \left\{ 
\begin{array}{ll}
    \Evol(\beta_{n,0})(nt), & \text{ if $t\in [0,1/n]$,} \\
    \Evol(\beta_{n,k})(nt-k).\Big(\eta_{k-1}(1)\hdots\eta_0(1)\Big), & \text{ if $t\in  [k/n, (k+1)/n]$.}
\end{array}\right.\]
This construction implies that the map 
\[ \left(\text{Evol}_C\right)|_{W_\gamma}:W_\gamma\to C([0,1],G),\]
is the product of composition of continuous maps, hence $\Evol_C$ is continuous.
\end{proof}

\begin{lemma}\label{lcfunction}
Let $r\in \N_0\cup\{\infty\}$ and $1\leq p <\infty$. Let $E_1$, $E_2$ and $F$ be Hausdorff locally convex spaces and $U\subseteq E_2$ an open subset. If $f:E_1\times U\to F$ is a $C^r$-map such that for each $y\in U$, the map $f(\cdot,y):E_1\to F$, $x\mapsto f(x,y)$ is linear, then the map
\[ \tilde{f}:L^p([0,1],E_1)\times C([0,1],U)\to L^p([0,1], F),\quad (\gamma,\eta)\mapsto f\circ (\gamma,\eta)\]
is $C^r$.
\end{lemma}
\begin{proof}
Let $(\gamma,\eta)\in L^p([0,1],E_1)\times C([0,1],U)$, then $\widetilde{f}(\gamma,\eta)\in L^p([0,1],F)$. Indeed, let $\beta$ be a continuous seminorm on $F$. We consider the compact subset $K:=\eta_0([0,1])$ of $U$. Let $y\in K$. By continuity of $f$ and $f(0,y)=0$, there exists a $y$-neighborhood $V_y \subseteq U$ and a continuous seminorm $\kappa_y$ on $E_1$ such that
\[ f\left( B_1^{\kappa_y} (0)\times V_y \right) \subseteq B_1^\beta (0). \]
Therefore, by linearity of $f$ in the first term, we have
\[ \beta(f(x,z))\leq \kappa_y(x),\quad \forall x\in E_1, \forall z\in V_y.\]
By compactness of $K$, there exists a finite numbers of $y_1,...,y_n\in K$ such that $K\subseteq V:=\cup_{i=1}^n V_{y_i}$. Denoting $\kappa:=\kappa_{y_1}+...+\kappa_{y_n}$, we have
\[ \beta\big( f(x,z)\big) \leq \kappa (x),\quad\forall x\in E_1, \forall z\in V. \]
Moreover, the set $C([0,1],V)$ is open in $C([0,1],U)$. Hence, we have
\begin{align*}
\lVert \tilde{f}(\gamma,\eta)\lVert_{L^p,\beta}  &:= \left(\int_0^1 \beta\big( \tilde{f}\circ (\gamma(t),\eta(t))\big)dt\right)^{\frac{1}{p}} \\
& \leq  \left(\int_0^1 \kappa(\gamma(t))dt\right)^{\frac{1}{p}} \\
& \leq \lVert \gamma \lVert_{L^p,\kappa} .
\end{align*} 
Thus $\tilde{f}(\gamma,\eta)\in L^p([0,1], F)$. Let $\varepsilon>0$. By \cite[Theorem 3.7]{Pin2}, the space $C([0,1],E_1)$ is dense in $L^p([0,1],E_1)$, i.e., there exists $\gamma_c \in C([0,1],E_1)$ such that
\[ \lVert \gamma_c - \gamma_0\lVert_{L^p,\kappa} \leq \varepsilon/4. \]
Since the map
\[ C([0,1],U)\to C([0,1],F),\quad \eta \mapsto f\circ (\gamma_c, \eta) \]
and inclusion map $C([0,1],F) \to L^p ([0,1],F)$ are continuous, the map
\[ C([0,1],V)\to L^p([0,1],F),\quad \eta \mapsto f\circ (\gamma_c, \eta) \]
is continuous. Therefore, there exists an open $\eta$-neighborhood $W_\eta$ in $C([0,1],V)$ such that
\[ \lVert \tilde{f}(\gamma_c, \eta_1)- \tilde{f}(\gamma_c ,\eta ) \lVert_{L^p,\beta} \leq \varepsilon/4, \quad\text{for all }\eta_1 \in W_\eta. \]
Let $\eta_1 \in W_\eta$ and $\gamma_1\in L^p([0,1],E_1)$ such that $\lVert \gamma-\gamma_1 \lVert_{L^p,\kappa}\leq \varepsilon/5$, we estimate
\begin{align*}  
\left\lVert \tilde{f}(\gamma_1, \eta_1)-\tilde{f}(\gamma,\eta) \right\lVert_{L^p,\beta} &
\leq \left\lVert \tilde{f}(\gamma_1,\eta_1) - \tilde{f}(\gamma_c,\eta_1)\right\lVert_{L^p,\beta}+\left\lVert \tilde{f}(\gamma_c,\eta_1)-\tilde{f}(\gamma,\eta)\right\lVert_{L^p,\beta} \\
& \leq \left\lVert \tilde{f}(\gamma_1-\gamma_c,\eta_1)\right\lVert_{L^p,\beta}+
\left\lVert \tilde{f}(\gamma_c,\eta_1)-\tilde{f}(\gamma_c,\eta )\right\lVert_{L^p,\beta} + \left\lVert \tilde{f}(\gamma_c,\eta)-\tilde{f}(\gamma ,\eta )\right\lVert_{L^p,\beta} \\
&= \left\lVert \tilde{f}(\gamma_1-\gamma_c,\eta_1)\right\lVert_{L^p,\beta}+\left\lVert \tilde{f}(\gamma_c ,\eta_1)-\tilde{f}(\gamma_c,\eta )\right\lVert_{L^p,\beta} + \left\lVert \tilde{f}(\gamma_c-\gamma  , \eta  ) \right\lVert_{L^p,\beta} \\
&\leq \Big\lVert \gamma_1-\gamma_c \Big\lVert_{L^p,\kappa}+\left\lVert \tilde{f}(\gamma_c ,\eta_1)-\tilde{f}(\gamma_c,\eta )\right\lVert_{L^p,\beta}+ \Big\lVert \gamma_c-\gamma  \Big\lVert_{L^p,\kappa}.
\end{align*}
Since $\lVert \gamma_c - \gamma\lVert_{L^p,\kappa} \leq \varepsilon/4$ and $\lVert \gamma_1-\gamma\lVert_{L^p,\kappa} \leq \varepsilon/4$ we have
\[ \lVert \gamma_1-\gamma_c \lVert_{L^p,\kappa} \leq 2\varepsilon/4. \]
Thus
\[ \lVert \tilde{f}(\gamma_1, \eta_1)-\tilde{f}(\gamma,\eta) \lVert_{L^p,\beta} \leq \varepsilon/4 + \varepsilon/4 + 2\varepsilon/4 = \varepsilon.\]
Thus $\tilde{f}$ is continuous in $(\gamma,\eta)\in L^p([0,1],E_1)\times C([0,1],U)$. By linearity in the first variable, the map $\overline{f}$ has a continuous differential in the first variable
\[ d_1 \tilde{f}: L^p([0,1],E_1)\times L^p([0,1],E_1)\times  C([0,1],E_2) \to L^p([0,1],F). \]
Let assume that the function $f$ as a $C^1$-map. Then, for $x\in E_1$, $y, y_1\in E_2$ and $t\in \R^{\times}$, we have
\[ \frac{1}{t}\left( f(x,y+t y_1)-f(x,y) \right) = \int_0^1 d_2 f(x,y+tsy_1,y_1)ds \]
whenever $y+[0,1]ty_1\subseteq U$. Given that the map 
\[ d_2 f:E_1\times U\times E_2 \to F,\quad (x,y,h_2)\mapsto D_{(0,h_2)}f(x,y)=d_2f(x,y,h_2)\] is continuous. We identify $C([0,1],E_1\times E_2)\cong C([0,1],E_1)\times C([0,1], E_2)$. Then, we have that the function
\[ \widetilde{d_2 f}:L^p([0,1],E_1)\times C([0,1],U)\times  C([0,1],E_2) \to L^p([0,1],F), \quad (\gamma,\eta, \eta_1)\mapsto d_2 f(\gamma,\eta,\eta_1) \]
is continuous. For each $t\in [0,1]$ fixed, we denote the evaluation function by 
\[g_t : C([0,1],F) \to F,\quad \gamma \mapsto \gamma(t).\]
Since the family of maps $\{ g_t :t\in [0,1]\}$ separate points on $C([0,1],F)$, we arrive to the equality
\[ \frac{1}{t}\left( f\circ (\gamma_c,\eta+t\eta_1)- f\circ (\gamma_c,\eta) \right) = \int_0^1 d_2 f(\gamma_c,\eta+ts\eta_1,\eta_1)ds \]
is valid for each $\gamma_c \in C([0,1],E_1)$, $\eta \in C([0,1],U)$, $\eta_1\in C([0,1],E_2)$ and $t\in \R^{\times}$ such that 
\[\eta+[0,1]t\eta_1 \in C([0,1],U).\] 
By density of $C([0,1],E_1)$ on $L^p([0,1],E_1)$ and continuity of $\tilde{f}$, for $\gamma\in L^p([0,1],E_1)$ the equation verifies
\[ \frac{1}{t}\left( \tilde{f} (\gamma,\eta+t\eta_1)- \tilde{f}(\gamma,\eta) \right) = \int_0^1 \widetilde{d_2 f}(\gamma,\eta+ts\eta_1,\eta_1)ds. \]
Let $\gamma$, $\eta$ and $\eta_0$ be fixed, then the map 
\[ [0,1]\times [0,1]\to F,\quad (t,s)\mapsto \widetilde{d_2 f}(\gamma,\eta+ts\eta_1,\eta_1)\] 
is continuous, including in $(0,s)$. Then, taken the limit $t\to 0$ in the equality we obtain
\[ d_2 \tilde{f}(\gamma, \eta,\eta_1) = \widetilde{d_2f}(\gamma,\eta,\eta_1).\]
Hence the continuity of $\widetilde{d_2f}$ implies that the map $\widetilde{f}$ is $C^1$. 
Proceeding by induction, if $f$ is a $C^k$-map, since $\tilde{f}$ is linear in the first variable we have
\begin{align*}
     d\tilde{f}(\gamma,\eta,\gamma_1,\eta_1)&=d_1\tilde{f}(\gamma,\gamma_1,\eta)+d_2\tilde{f}(\gamma,\eta,\eta_1) \\
     &= \tilde{f}(\gamma_1,\eta) + \widetilde{d_2 f}(\gamma,\eta,\eta_1).
\end{align*}
By the induction hypothesis  $\tilde{f}$ and $d_2 f$ are $C^{r-1}$, hence $\widetilde{d_2f}$ is $C^{r-1}$ with $\widetilde{d_2f}=d_2\tilde{f}$, thus $\tilde{f}$ is $C^r$.
\end{proof}
\noindent
For the case $L_{rc}^\infty$ we recall \cite[Proposition 2.3]{GL2}.
\begin{lemma}\label{lemmadif}
Let $E_1, E_2$ and $F$ be integral complete locally convex spaces and $U\subseteq E_2$ an open subset. If $r\in \N\cup\{0,\infty\}$ and $f:E_1\times U\to F$ is a $C^r$-map such that for each $y\in U$ the map $f(\cdot,y):E_1\to F$ is linear, then the map
\[ \tilde{f}:L_{rc}^\infty([0,1],E_1)\times C([0,1],U)\to L_{rc}^\infty([0,1], F),\quad (\gamma,\eta)\mapsto f\circ (\gamma,\eta)\]
is $C^r$.
\end{lemma}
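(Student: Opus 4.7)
The plan is to follow the structure of Lemma \ref{lcfunction} closely, adapting each step to the $L_{rc}^\infty$-setting, where the key technical change is that integral completeness of the target space $F$ replaces the role of sequential completeness, and essential-sup estimates on compact images replace the use of density of $C([0,1],E_1)$ in $L^p([0,1],E_1)$. The principal obstacle will be establishing continuity of $\tilde f$ without the dense-subspace trick from the $L^p$ case; this is handled by exploiting the relative compactness and metrizability of the ranges of elements of $\mathcal{L}_{rc}^\infty$.

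First, I would check that $\tilde{f}$ is well-defined. Given $[\gamma]\in L_{rc}^\infty([0,1],E_1)$ and $\eta\in C([0,1],U)$, the closure $K_1:=\overline{\gamma([0,1])}$ is compact and metrizable in $E_1$, and $K_2:=\eta([0,1])$ is compact in $U$. Continuity of $f$ implies that $f(K_1\times K_2)$ has compact metrizable closure in $F$, so $f\circ(\gamma,\eta)$ is Borel measurable (composition of Borel with continuous) with range contained in a compact metrizable set, placing it in $\mathcal{L}_{rc}^\infty([0,1],F)$.

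Next I would prove continuity of $\tilde f$ at a point $(\gamma_0,\eta_0)$. Using continuity and linearity of $f$ in the first variable, a standard compactness-plus-cover argument produces, for each continuous seminorm $\beta$ on $F$, an open set $V\supseteq\eta_0([0,1])$ in $U$ and a continuous seminorm $\kappa$ on $E_1$ such that $\beta(f(x,y))\leq\kappa(x)$ for all $(x,y)\in E_1\times V$. Splitting
\[
f(\gamma(t),\eta(t))-f(\gamma_0(t),\eta_0(t))
=f(\gamma(t)-\gamma_0(t),\eta(t))+\bigl[f(\gamma_0(t),\eta(t))-f(\gamma_0(t),\eta_0(t))\bigr],
\]
the first bracket is bounded in $\beta$ by $\kappa(\gamma(t)-\gamma_0(t))$, which is small in essential sup once $\lVert[\gamma-\gamma_0]\rVert_{L^\infty,\kappa}$ is small and $\eta([0,1])\subseteq V$. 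For the second bracket, the continuous map $F\colon[0,1]\times U\to F$, $(t,y)\mapsto f(\gamma_0(t),y)-f(\gamma_0(t),\eta_0(t))$, restricted to the compact slice $[0,1]\times\eta_0([0,1])$, is uniformly small on a tube $\{(t,y):y\in W_t\}$ obtained from a finite subcover; this yields a $C([0,1],U)$-neighborhood of $\eta_0$ on which the second bracket is small uniformly in $t$, hence in essential sup.

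For the $C^1$-step, I would apply the Mean Value Theorem in the second variable, which is available because $F$ is integral complete, to obtain
\[
\tfrac{1}{t}\bigl(f(x,y+ty_1)-f(x,y)\bigr)=\int_0^1 d_2f(x,y+tsy_1,y_1)\,ds,
\]
and then lift this pointwise identity to the function-space level: the integrand $\widetilde{d_2f}(\gamma,\eta+ts\eta_1,\eta_1)$ is, by the first part applied to $d_2f$, a continuous map into $L_{rc}^\infty([0,1],F)$, and the integral can be carried out there by integral completeness of $L_{rc}^\infty([0,1],F)$. Passing to $t\to 0$ yields $d_2\tilde f=\widetilde{d_2f}$, while linearity in the first variable gives $d_1\tilde f(\gamma,\gamma_1,\eta)=\tilde f(\gamma_1,\eta)$, so that
\[
d\tilde f(\gamma,\eta,\gamma_1,\eta_1)=\tilde f(\gamma_1,\eta)+\widetilde{d_2f}(\gamma,\eta,\eta_1).
\]
Induction on $r$, using that $d_2f\colon E_1\times U\times E_2\to F$ is $C^{r-1}$ and linear in the first argument of $(E_1\times E_2)$, finishes the proof. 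The main obstacle, as mentioned, is organizing the compactness argument cleanly in the continuity step, since we cannot reduce to continuous $\gamma$ by density; but the metrizability and relative compactness built into the definition of $\mathcal{L}_{rc}^\infty$ supplies exactly what is needed.
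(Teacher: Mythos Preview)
The paper does not prove this lemma at all; it simply records it as a citation of \cite[Proposition 2.3]{GL2}. So there is no in-paper proof to compare against, and your attempt stands on its own.

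Your overall architecture---well-definedness via compact metrizable images, continuity via the splitting
\[
f(\gamma,\eta)-f(\gamma_0,\eta_0)=f(\gamma-\gamma_0,\eta)+\bigl[f(\gamma_0,\eta)-f(\gamma_0,\eta_0)\bigr],
\]
then the Mean Value Theorem for the partial derivative in the second variable, and induction---is the right one and mirrors the $L^p$ argument in Lemma~\ref{lcfunction}. There is, however, a genuine error in your treatment of the second bracket. You assert that
\[
(t,y)\longmapsto f(\gamma_0(t),y)-f(\gamma_0(t),\eta_0(t))
\]
is continuous on $[0,1]\times U$. This is false in general: $\gamma_0$ is only Borel measurable (that is what $\mathcal{L}_{rc}^\infty$ gives you), not continuous, so you cannot run a compactness argument on $[0,1]\times\eta_0([0,1])$ as written.

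The repair is exactly the resource you already identified but did not fully deploy: use the compact closure $K_1:=\overline{\gamma_0([0,1])}\subseteq E_1$ directly. The map
\[
G\colon K_1\times \eta_0([0,1])\times E_2\to F,\qquad (x,y,z)\mapsto f(x,y+z)-f(x,y)
\]
is continuous on its natural domain and vanishes on $K_1\times\eta_0([0,1])\times\{0\}$; compactness of $K_1\times\eta_0([0,1])$ then yields a single $0$-neighbourhood $W\subseteq E_2$ with $\beta(G(x,y,z))<\varepsilon$ for all $(x,y)\in K_1\times\eta_0([0,1])$ and $z\in W$. Since $\gamma_0(t)\in K_1$ for every $t$, this controls the second bracket uniformly in $t$ once $\eta-\eta_0$ takes values in $W$. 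With this correction your continuity step goes through, and the rest of your outline is sound.
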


\begin{theorem}\label{teocontinuity}
    Let $1\leq p < \infty$ and $G$ be a sequentially complete locally convex right half-Lie group which admits a local addition such that the map 
    \[ \sigma : T_eG\times G \to TG,\quad (v,g)\mapsto (g,T_e\rho_g(v)) \]
    is continuous. If $G$ is $L^p$-semiregular and the map evolution map with continuous values $\Evol_C:L^p([0,1],T_eG)\to C([0,1],G)$ is continuous, then the evolution map of $G$
    \[ \Evol:L^p([0,1],T_eG),\to AC_{L^p}([0,1],G),\quad \gamma\mapsto \Evol(\gamma)\]
    is continuous. 
\end{theorem}
\begin{proof}
Let $\gamma\in L^p([0,1],T_eG)$. Let $\mathcal{P}_{\gamma}\subseteq C([0,1],G)$ be a open neighborhood of $\Evol_C(\gamma)$, then there exists a partition $P=\{t_0,...,t_n\}$ of $[0,1]$ and a family of charts $\varphi_i:U_i\to V_i$ such that
\[ \mathcal{P}_{\gamma} = \bigcap_{i=1}^n \{ \eta\in C([t_{i-1},t_i],U_i) : \eta( [t_{i-1},t_i])\subseteq U_i\}.\]
Let $\mathcal{Q}_{\gamma} = \Evol_C^{-1}(\mathcal{P}_{\gamma})$, then $\mathcal{Q}_{\gamma}$ is an open neighborhood of ${\gamma}$ in $L^p([0,1],T_eG)$. We will show that the map
\[ \Evol|_{\mathcal{Q}_{\gamma}}: \mathcal{Q}_{\gamma}\to \mathcal{P}_{\gamma} \cap AC_{L^p}([0,1],G),\quad \gamma \mapsto \Evol(\gamma) \]
is continuous. The map
\[ \Phi_1: \mathcal{Q}_{\gamma} \to \prod_{i=1}^n L^p([t_{i-1},t_i],T_e G),\quad \beta \mapsto \left( [\beta|_{[t_{i-1},t_i]}]\right)_{i=1}^n \]
is a linear topological embedding (see \cite[Lemma 2.16]{Nin1}). For each $i\in\{1,...,n\}$ we have 
\[ \Evol_C(\beta|_{[t_{i-1},t_i]})=\Evol_C(\beta)|_{[t_{i-1},t_i]},\quad \text{ for all } \beta\in \mathcal{Q}_{\gamma}.\]
And each map 
\[ C([t_{i-1},t_i],U_i)\to C([t_{i-1},t_i],V_i),\quad \eta\mapsto \varphi_i\circ \eta\] 
is an homeomorphism. This allow us to define the continuous map
\[ \Phi_2: \mathcal{Q}_{\gamma} \to \prod_{i=1}^n C([t_{i-1},t_i],V_i),\quad \beta\mapsto \left(\varphi_i\circ \Evol_C(\beta)|_{[t_{i-1},t_i]}\right)_{i=1}^n. \]
We consider the map 
\[f_i:T_e G\times V_i \to E,\quad (v,x)\mapsto d\varphi_i \circ \tau(v,\varphi_i^{-1}(x)). \]
Then each $f_i$ is continuous and for each $x\in V_i$ fixed, the function 
\[f_i(\cdot,x):T_e G\to E,\quad v\mapsto f_i(v,x)\] 
is linear. Therefore, by Lemma \ref{lcfunction}, the map
\[ \tilde{f}_i:L^p([t_{i-1},t_i],T_e G)\times C([t_{i-1},t_i],V_i)\to L^p ([t_{i-1},t_i],E),\quad (\beta, \eta)\mapsto f_i \circ (\beta,\eta)\] 
is continuous. We define the function
\[  F:\prod_{i=1}^nL^p([t_{i-1},t_i],T_e G)\times  C([t_{i-1},t_i],V_i)\to  \prod_{i=1}^n L^p ([t_{i-1},t_i],E),\quad\left( [\beta_i],\eta_i \right)_{i=1}^n \mapsto  \left( f_i \circ ([\beta_i],\eta_i)\right)_{i=1}^n.\]
Then $F$ is continuous and for each  $i\in\{1,...,n\}$, we have
\[ f_i \circ ([\beta_i],\eta_i) = d\varphi_i \circ\tau([\beta_i],\varphi_i^{-1}\circ \eta_i). \]
This allow us to define the continuous map 
\begin{align*}
    \Phi_3 : \mathcal{Q}_{\gamma} &\to \prod_{i=1}^n  L^p ([t_{i-1},t_i],E)\times C([t_{i-1},t_i],V_i),\\
\beta &\mapsto \Big(f_i\circ \left(\left[\beta|_{[t_{i-1},t_i]}\right], \varphi_i\circ \Evol_C(\beta)|_{[t_{i-1},t_i]}\right), \varphi_i\circ \Evol_C(\beta)|_{[t_{i-1},t_i]}\Big)_{i=1}^n
\end{align*}
We notices that
\begin{align*}
    f_i \circ \left( \left[\beta|_{[t_{i-1},t_i]}\right], \varphi_i \circ \Evol_C|_{[t_{i-1},t_i]} \right) &=
    d\varphi_i \circ \tau \left( \left[\beta|_{[t_{i-1},t_i]}\right] ,\varphi_i^{-1} \circ  \varphi_i \circ \Evol_C ( \beta ) |_{[t_{i-1},t_i]} \right) \\
    &= d\varphi_i \circ  \tau \left( \left[\beta|_{[t_{i-1},t_i]}\right] , \Evol_C ( \beta ) |_{[t_{i-1},t_i]}\right) \\
    &= d \varphi_i \circ \left(\Evol_C(\beta)|_{[t_{i-1},t_i]}\right)^\prime \\
    &= \left(\varphi_i \circ \Evol_C(\beta)|_{[t_{i-1},t_i]}\right)^\prime.
\end{align*}
Hence
\[ \Phi_3 (\beta) = \Big(\left(\varphi_i \circ \Evol_C(\beta)|_{[t_{i-1},t_i]}\right)', \varphi_i \circ \Evol_C(\beta)|_{[t_{i-1},t_i]}\Big)_{i=1}^n,\quad \text{ for all }\beta\in\mathcal{Q}_{\gamma}. \]
We set the topological embedding (see Lemma \ref{actocl})
\[ \Psi_i : AC_{L^p}([t_{i-1},t_i],V_i)\to  L^p([t_{i-1},t_i],V_i)\times C([t_{i-1},t_i],V_i),\quad \alpha\mapsto (\alpha',\alpha),\]
And
\[ \Psi : \prod_{i=1}^n AC_{L^p}([t_{i-1},t_i],V_i)\to  \prod_{i=1}^n  L^p([t_{i-1},t_i],T_eG) \times C([t_{i-1},t_i],V_i),\quad (\alpha_i)_{i=1}^n \mapsto (\alpha_i^\prime,\alpha_i)_{i=1}^n.\]
Thus $\text{Im}(\Phi_3)\subseteq \text{Im}(\Psi)$. For each $\beta\in\mathcal{Q}_{\gamma}$, we have
\[  \left(\Psi_i|^{\text{Im}(\Psi_i)} \right)^{-1}\Big((\varphi_i \circ \Evol_C(\beta)|_{[t_{i-1},t_i]})', \varphi_i \circ \Evol_C(\beta)|_{[t_{i-1},t_i]}\Big) = \varphi_i \circ \Evol(\beta)|_{[t_{i-1},t_i]}\]
We set the continuous map
\[ \Phi_4 := \left(\Psi|^{\text{Im}(\Psi)} \right)^{-1}\circ \Phi_3 : \mathcal{Q}_{\gamma} \to \prod_{i=1}^n AC_{L^p}([t_{i-1},t_i],V_i),\quad \beta\mapsto \left(\varphi_i \circ \Evol(\beta)|_{[t_{i-1},t_i]}\right)_{i=1}^n. \]
And the map
\[ \Phi_5:AC_{L^p}([0,1],G)\to \prod_{i=1}^n AC_{L^p}([t_{i-1},t_i],G),\quad \eta\mapsto (\eta|_{[t_{i-1},t_i]})_{i=1}^n,\]
which is a homeomorphism onto its image (see \cite[Proposition 4.13]{Pin}) and the homeomorphism
\[ \Phi_6: \prod_{i=1}^n AC_{L^p}([t_{i-1},t_i], V_i)\to \prod_{i=1}^n AC_{L^p}([t_{i-1},t_i],U_i),\quad \tau_i\mapsto (\varphi_i^{-1}\circ \tau_i)_{i=1}^n.\]
We see that
\[ \Phi_6 \circ \Phi_4 : \mathcal{Q}_{\gamma}\to \prod_{i=1}^n AC_{L^p}([t_{i-1},t_i],U_i),\quad \beta\mapsto \left(\Evol(\beta)|_{[t_{i-1},t_i]}\right)_{i=1}^n. \]
Since each function $\Evol(\beta)$ is continuous, we have that $(\Phi_6 \circ \Phi_4) \left(\mathcal{Q}_{\gamma}\right)\subseteq \text{Im}(\Phi_5)$, hence
\[ \Evol|_{\mathcal{Q}_{\gamma}}= \left(\Phi_5|^{\text{Im}(\Phi_5)}\right)^{-1} \circ \Phi_6 \circ \Phi_4. \]
Thus, the evolution map $\Evol$ is continuous.
\end{proof}

\begin{coro}
    Let $G$ be a integral complete locally convex right half-Lie group which admits a local addition such that the map 
    \[ \sigma : T_eG\times G \to TG,\quad (v,g)\mapsto (g,T_e\rho_g(v)) \]
    is continuous. If $G$ is $L_{rc}^\infty$-semiregular and the map evolution map with continuous values $\Evol_C:L_{rc}^\infty([0,1],T_eG)\to C([0,1],G)$ is continuous, then the evolution map of $G$
    \[ \Evol:L_{rc}^\infty([0,1],T_eG),\to AC_{L^p}([0,1],G),\quad \gamma\mapsto \Evol(\gamma)\]
    is continuous.
\end{coro}
\begin{proof}
    By Lemma \ref{lemmadif}, the proof is analogous since the map 
\[ \tilde{f}_i:L_{rc}^\infty([t_{i-1},t_i],T_e G)\times C([t_{i-1},t_i],V_i)\to L_{rc}^\infty ([t_{i-1},t_i],E),\quad (\beta, \eta)\mapsto f_i \circ (\beta,\eta)\] is continous for each $i\in\{1,...,n\}$.
\end{proof}

\section{$L^p$-semiregularity of $\Dif(\Rn)$}
Let $n,r\in \N$, $1\leq p<\infty$ and $K$ be a compact subset of $\Rn$. In this section, we will show that the Banach half-lie group of all $C^r$-diffeomorphisms of $\Rn$ with support in $K$, denotated by $\Dif(\Rn)$, is $L^p$-semiregular with evolution map
\[ \Evol:L^p([0,1],T_e \Dif(\Rn))\to AC_{L^p}([0,1],\Dif(\Rn)),\quad \gamma\mapsto \Evol(\gamma)\]
continuous.
\begin{definition}
    Let $n, m\in \N$ and $r \in \N_0$. We consider the vector space
    \[ C(\Rn, \Rm)= \{\phi:\Rn\to \Rm \mid \text{ $\phi$ is continuous}\}.\]
    For each non empty compact set $L\subseteq \Rn$ we define the seminorm
    \[ \lVert \cdot \lVert_{L} : C(\Rn, \Rm)\to [0,\infty), \quad \phi\mapsto \sup_{x\in L} \lvert \phi(x)\lvert.\]
    Then the space $C(\Rn,\Rm)$, endowed with the locally convex topology generated by these family of seminorms, is a Fr\'echet space. Now, we consider the vector space
    \[ C^r(\Rn,\Rm)=\{\phi\in C(\Rn,\Rm) \mid \text{ $\phi$ is a $C^r$-map}\},\] 
    endowed with the initial topology with respect to the maps
    \[ D^\alpha:C^r(\Rn,\Rm)\to C(\Rn,\Rm),\quad \phi\mapsto \frac{\partial^\alpha \phi}{\partial x^\alpha}\]
    for $\alpha\in \N_0^n$ with $\lvert \alpha \lvert \leq r$. Then $C^r(\Rn,\Rm)$ is a Fr\'echet space. \\ 
    For a fixed compact subset $K\subset \Rn$, we define the vector space of $C^r$-maps supported in $K$ as
    \begin{equation*}
     C_K^r(\Rn,\Rm):= \{ \phi\in C^r(\Rn,\Rm) : \phi|_{\Rn\setminus K} = 0\}
    \end{equation*}
    endowed with the subspace topology. Then $C_K^r(\Rn,\Rm)$ is a Banach space with norm
    \[ \lVert \phi \lVert = \sum_{\lvert \alpha\lvert \leq r} \sup_{x\in K}\lvert D^\alpha(\phi)(x)\lvert,\quad \forall \phi\in C_K^r(\Rn,\Rm). \] 
\end{definition}

\begin{definition}\label{example}
Let $n\in \N$, $r\in \N_0$ and $K\subset \Rn$ be a fixed compact set. Under composition, we define the group of $C^r$-diffeomorphisms of $\Rn$ as
\[\Diff(\Rn)= \{ \phi:\Rn\to\Rn \mid \text{$\phi$ is a $C^r$-diffeomorphism}\}.\]
Denoting $\id:\Rn\to \Rn$ the identity map, we define the subgroup of all $C^r$-diffeomorphisms with support in $K$ by
\begin{equation*}
    \Dif(\Rn):= \{ \phi\in\Diff(\Rn) : \phi-\id \in C_K^r(\Rn,\Rn)\}.
\end{equation*}
We define the open set  
\[ \mathcal{V}_K := \{ \phi-\id \in C_K^r(\Rn,\Rn) : \phi\in \text{Diff}_K^r(\Rn) \},\] and the map
\[  \Phi:\Dif \to \mathcal{V}_K, \quad \phi\mapsto \phi-\id.\] 
Then $(\Phi,\Dif(\Rn))$ is a global chart of $\Dif(\Rn)$, turning it into a topological group and a Banach smooth manifold. 
\end{definition}

\begin{remark}
On $\mathcal{V}_k$ we define the group operation
\[ \phi\ast \psi := \Phi\left( \Phi^{-1}(\phi)\circ \Phi^{-1}(\psi)\right) =\psi+\phi\circ (\id + \psi),\]
for all $\phi,\psi\in \mathcal{V}_K$, with the constant function $0$ as neutral element. Therefore, the map $\Phi$ is a smooth group isomorphism, enabling us to do the identification 
\[ \Dif(\Rn)\cong\mathcal{V}_k\] as topological groups and smooth manifolds. Since $C_K^r(\Rn,\Rn)$ is a Banach space, we have
\[ T\mathcal{V}_K \cong \mathcal{V}_K \times C_K^r(\Rn,\Rn).\]
Let $\psi \in \mathcal{V}_K$ be fixed, the right translation is given by
\[ \rho_\psi : \mathcal{V}_K \to \mathcal{V}_K,\quad \phi \mapsto \psi+\phi\circ (\id+\psi). \]
Its tangent map is 
\[ T\rho_\psi: \mathcal{V}_K \times C_K^r(\Rn,\Rn) \to  \mathcal{V}_K \times C_K^r(\Rn,\Rn),\quad (\phi,f) \mapsto \Big( \psi+\phi\circ (\id+\psi), f\circ (\id+\psi )\Big).\]
Hence, $\mathcal{V}_k$ (and in consequence $\Dif(\Rn)$) is a Banach right half-Lie group. Moreover, identifying $T_0\mathcal{V}_K$ with $C_K^r(\Rn,\Rn)$, we notices that the map
\[ \sigma:C_K^r(\Rn,\Rn) \times \mathcal{V}_k \to T\mathcal{V}_K,\quad (f,\phi) \mapsto \Big( \phi, f\circ (\id+\phi )\Big).\]
is continuous (see e.g. \cite{GL3}). We state this fact as the following lemma.
\end{remark}   

\begin{lemma}\label{sigmadiffk}
    Let $n\in \N$, $r\in \N_0$ and $K\subset \Rn$ be a fixed compact set. Then the map
\[ \sigma:T_{\id}\Dif(\Rn)\times \Dif(\Rn) \to T\Dif(\Rn),\quad (X,\phi)\mapsto \Big(\phi, X\circ\phi\Big).\]
is continuous.
\end{lemma}

\begin{remark}\label{redif}
Let $\phi,\psi\in \mathcal{V}_K$ and $f\in C_K^r(\Rn,\Rn)$, then
\[ (\phi,f).\psi := T\rho_\psi (\phi,f) = (\phi\ast \psi, f\circ (\id+\psi ) ). \]
Let $\gamma\in L^p([0,1],C_K^r(\Rn,\Rn))$. We want to find a function $\eta \in AC_{L^p} ([0,1], \mathcal{V}_K)$ such that 
\[  \dot\eta=\gamma.\eta,\quad \eta(0)=0.\] 
In other words
\[ (\eta(t),\eta'(t))=\Big(\eta(t),\gamma(t)\circ (\id+\eta(t))\Big),\quad \text{ a.e. $t\in [0,1]$.} \]
Therefore
\[ \eta(t)=\int_{0}^t \gamma(s)\circ (\id+\eta(s))ds, \quad \forall t\in [0,1].\]
Setting $\zeta=\id+\eta(s)$, this is equivalent to the integral equation
\[ \zeta(t)=\id+\int_{0}^t \gamma(s)\circ \zeta (s) ds,\quad \forall t\in [0,1].\]
For $x\in \Rn$, let us consider the continuous linear map
\[ \varepsilon_x :C_K^r(\Rn,\Rn)\to \Rn,\quad f\mapsto f(x).\] 
be the point evaluation map.Then the family of maps $(\varepsilon_x)_{x\in \Rn}$ separate points on $C_K^r(\Rn,\Rn)$. Hence, $\zeta\in AC_{L^p}([0,1],\mathcal{V}_K)$ es solution of the integral equation 
\[ \zeta(t)=\id+\int_{0}^t \gamma(s)\circ \zeta (s) ds,\quad \forall t\in [0,1],\]
if, and only if, the functions $\zeta_x:=\varepsilon_x \circ \zeta \in AC_{L^1} ([0,1], \Rn)$ satisfy
\[ \zeta_x(t)=x+\int_{0}^t \gamma(s)\circ \zeta_x (s) ds, \quad\forall t\in [0,1].\]
\end{remark}

\begin{theorem}\label{text-main-3}
Let $n, r\in \N$ and $1\leq p<\infty$. Then the Banach right half-Lie group $\Dif(\Rn)$ is $L^p$-semiregular. Moreover, its evolution map
\[ \Evol: L^p([0,1],T_e\Dif(\Rn)) \to AC_{L^p}([0,1],\Dif(\Rn)),\quad  \gamma\mapsto \eta_\gamma.\]
is continuous.
\end{theorem}
\begin{proof}
Following the discussion in Remark \ref{redif}, we will show that $\mathcal{V}_K$ is locally $L^p$-semiregular.\\ 
We denote derivative map as
\begin{equation*}
D:C_K^r(\Rn,\Rn)\to C_K^{r-1}(\Rn,\R^{n\times n}),\quad  f\mapsto Df:=f'
\end{equation*}
where $f'(x)$ is the Jacobian matrix of $f$ and we define the continuous seminorm on $C_K^r(\Rn,\Rn)$ by
\[ \alpha: C_K^r(\Rn,\Rn)\to [0,\infty),\quad \alpha(f):= \lVert f\lVert_{\mathcal{L}^\infty, \lVert \cdot \lVert_{op}} = \sup_{x\in \Rn} \lVert Df(x) \lVert_{op}.\]
Let $0<L<1$. We denote the open ball centered in $0\in L^p([0,1],C_K^r(\Rn,\Rn))$ by
\[ B_L := \left\{ \gamma\in L^p([0,1],C_K^r(\Rn,\Rn)): \lVert \gamma \lVert_{L^p,\alpha} := \left(\int_0^1 (\alpha\circ\gamma)^p (t)dt\right)^{1/p} < L \right\}. \]
For $x\in \Rn$, we define the smooth map
\[ c:\Rn\to C([0,1],\Rn),\quad x\mapsto [t\mapsto x]. \]
Let $J:L^1 ([0,1],\Rn)\to C([0,1],\Rn)$ be the continuous linear operator
\[ J([\xi])(t):=\int_0^t \xi(s)ds, \text{ for all }t\in [0,1].\]
Let us consider the evaluation map 
\[ \varepsilon:C_K^r (\Rn,\Rn)\times \Rn \to \Rn,\quad (f,x)\mapsto f(x)\] 
which is a $C^{\infty,r}$-map (and hence, a $C^r$-map). By Lemma \ref{lcfunction}, the map
\[ \widetilde{\varepsilon} : L^p([0,1],C_K^r (\Rn,\Rn))\times C([0,1],\Rn)\to L^p([0,1],\Rn),\quad (\gamma,\zeta)\mapsto \varepsilon\circ (\gamma,\zeta) \]
is well defined and is a $C^r$-map. We define the operator 
\[ T:B_L\times \Rn\times C([0,1],\Rn)\to C([0,1],\Rn),\quad (\gamma,x,\zeta)\mapsto c(x)+(J\circ\widetilde{\varepsilon})(\gamma,\zeta).\] 
In other words, if $t\in [0,1]$, then
\[ T(\gamma,x,\zeta)(t) = x + \int_0^t \gamma(s)\left( \zeta(s)\right)ds. \]
Then $T$ is $C^{\infty,\infty,r}$ in the sense of \cite{Alz}. Let $\gamma\in B_L$, by the Mean Value Theorem, for all $t\in [0,1]$ and $x,y\in \Rn$ we have
\[ \gamma(t)(x)-\gamma(t)(y)=\int_0^1 D(\gamma(t))(x+s(y-x))(y-x)ds.\]
Therefore, for $\zeta_1, \zeta_2 \in C([0,1],\Rn)$ and $s\in [0,1]$, we have
\[ \gamma(s)\circ \zeta_2(s) -\gamma(s)\circ\zeta_1 (s) = \int_0^1 \Big( D(\gamma (s)) \Big)\Big(\zeta_1(s)+w(\zeta_2(s)-\zeta_1 (s))\Big)\Big(\zeta_2(s)-\zeta_1(s)\Big)dw. \]
Thus
\begin{align*}
\lVert T(\gamma,x,\zeta_2)-T(\gamma,x,\zeta_1)\lVert_\infty 
& = \sup_{t\in [0,1]} \left\lVert \int_0^t (\gamma(s)\circ \zeta_2(s) -\gamma(s)\circ\zeta_1 (s)) ds  \right\lVert_\infty  \\
&\leq \sup_{t\in [0,1]}\int_0^t\int_0^1 \left\lVert\Big( D(\gamma (s)) \Big)\Big(\zeta_1(s)+w(\zeta_2(s)-\zeta_1 (s))\Big)\Big(\zeta_2(s)-\zeta_1(s)\Big)\right\lVert_\infty dwds \\
& \leq \sup_{t\in [0,1]}\int_0^t \lVert D\gamma(t) \lVert_{\mathcal{L}^\infty, \lVert\cdot\lVert_{op}} \lVert \zeta_2 - \zeta_1 \lVert_{\infty}dt \\
& = \sup_{t\in [0,1]}\int_0^1 \alpha(\gamma(t)) dt \lVert \zeta_2 -  \zeta_1 \lVert_{\infty}\\
&\leq \lVert \gamma \lVert_{L^1, \alpha} \lVert \zeta_2-\zeta_1\lVert_{\infty} \\
&\leq \lVert \gamma \lVert_{L^p, \alpha} \lVert \zeta_2-\zeta_1\lVert_{\infty} \\
&\leq L \lVert \zeta_2-\zeta_1\lVert_{\infty}.
\end{align*}
Hence $T(\gamma,x,\cdot)$ is a $C^r$-map and an $L$-contraction. By Banach's Fixed Point Theorem, there exists $\zeta_{\gamma,x} \in C([0,1],\Rn)$ such that
\[ T(\gamma,x,\zeta_{\gamma,x})=\zeta_{\gamma,x}. \]
By \cite[Lemma 6.2]{GL2} the map
\[ F_1:B_L\times \Rn \to C([0,1],\Rn),\quad (\gamma,x)\mapsto \zeta_{\gamma,x} \]
is $C^r$. By the exponential laws, we have:
\begin{itemize}
\item[i)] $ F_2: \left( B_L\times \Rn\right) \times [0,1]\to \Rn$, $(\gamma,x,t)\mapsto \zeta_{\gamma,x}(t) $ is $C^{r,0}$. 
\item[ii)] $ F_3: B_L\times [0,1]\to C^r(\Rn,\Rn)$, $(\gamma,t)\mapsto \zeta_{\gamma,\cdot}(t)$ is $C^{r, 0}$.
\item[iii)] $ F_4: B_L \to C([0,1],C^r(\Rn,\Rn))$, $\gamma\mapsto \zeta_{\gamma} $ is continuous.
\end{itemize} 
 Let $x\in \Rn$ and $\zeta_{\gamma,x}\in C([0,1],\Rn)$ be the solution of the integral equation
\[ \zeta_{\gamma,x} (t) = x + \int_0^t \gamma(s) \left( \zeta_{\gamma,x} \right) (s) ds,\quad \forall t\in [0,1].\]
Since $\gamma:[0,1]\to C_K^r(\Rn,\Rn)$, if $x\in \Rn\setminus K$ then $\gamma(\cdot)(x)=0$ and 
\[ \zeta_{\gamma,x} (t) = x,\quad \forall t\in [0,1].\]
Therefore, the constant map $c_x:\Rn\to \Rn$ is also solution. Hence, by uniqueness of solutions, we have $\zeta_{\gamma,x} = c_x$. 
Let us consider the constant map $\text{Id}:[0,1]\to C^r(\Rn,\Rn)$, $t\mapsto \id$, we define
\[S:B_L\to C([0,1],C^r(\Rn,\Rn)), \quad \gamma\mapsto F_4(\gamma)-\text{Id}. \]
Then
\[ S(\gamma)(t)=\int_0^t \gamma(s) \left( \zeta_{\gamma,x} \right) (s) ds\in C_K^r(\Rn,\Rn),\quad \forall t\in [0,1].\]
Moreover, since $S$ is continuous and $\mathcal{V}_K$ is open, there is exists an 0-neighborhood $B \subseteq B_L$ such that $S(B)\subseteq  \mathcal{V}_K$. Hence, we consider the continuous map
\[\tilde{S}: B \to C([0,1],\mathcal{V}_K),\quad \gamma\mapsto S(\gamma).\] 
By Remark \ref{redif}, if $\gamma\in B$, then $\eta:=\tilde{S}(\gamma)$ is solution of the integral equation
\[ \eta(t)=\int_{0}^t \gamma(s)\circ (\id+\eta(s))ds,\quad \forall t\in [0,1].\]
Hence $\Evol(\gamma)=\eta$. In consequence, by Lemma \ref{resemiregular}, $\mathcal{V}_K$ is $L^p$-semiregular. Moreover, the evolution map restricted to the $0$-neighborhood $B$ is given by
\[\Evol|_B:B\subseteq L^p([0,1],C_K^r(\Rn,\Rn)) \to AC_{L^p}([0,1], \mathcal{V}_K), \quad \gamma\mapsto \tilde{S}(\gamma).\]
Since $\tilde{S}$ is continuous, by Lemma \ref{regularity}, the evolution map with continuous values $\Evol_C$ is continuous. Since $\sigma$ is continuous (see Lemma \ref{sigmadiffk}), by Theorem \ref{teocontinuity}, the evolution map of $\mathcal{V}_K$
\[\Evol: L^p([0,1],C_K^r(\Rn,\Rn)) \to AC_{L^p}([0,1], \mathcal{V}_K), \quad \gamma\mapsto \Evol(\gamma)\]
is continuous. Therefore, by identification, $\Dif(\Rn)$ is $L^p$-regular and its evolution map continuous.
\end{proof}

Proceeding exactly as the case $L^p$, for the case $L_{rc}^\infty$ we have the same result.

\begin{prop}\label{linftyreg}
If $r\in \N$, then the right half-Lie group $\Dif(\Rn)$ is $L_{rc}^\infty$-semiregular. Moreover, the evolution map
\[ \Evol: L_{rc}^\infty([0,1],T_e\Dif(\Rn)) \to AC_{L_{rc}^\infty}([0,1],\Dif(\Rn)),\quad  \gamma\mapsto \eta_\gamma \]
is continuous.
\end{prop}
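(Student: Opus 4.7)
The plan is to mirror the proof of Theorem~\ref{text-main-3} line by line, replacing $L^p$ with $L_{rc}^\infty$ throughout and invoking Lemma~\ref{lemmadif} in place of Lemma~\ref{lcfunction}. Since $C_K^r(\mathbb{R}^n,\mathbb{R}^n)$ is a Banach space, it is integral complete, so Lemma~\ref{lemmadif} applies to the $C^{\infty,r}$ evaluation map $\varepsilon\colon C_K^r(\mathbb{R}^n,\mathbb{R}^n)\times \mathbb{R}^n\to\mathbb{R}^n$ and yields a $C^r$ map
\[
\Phi_n\colon L_{rc}^\infty([0,1],C_K^r(\mathbb{R}^n,\mathbb{R}^n))\times C([0,1],\mathbb{R}^n)\to L_{rc}^\infty([0,1],\mathbb{R}^n),
\]
which is the only essentially new ingredient compared to the $L^p$ case.

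Using the same seminorm $\alpha(f):=\sup_{x\in\mathbb{R}^n}\|Df(x)\|_{op}$, I would define the $0$-neighborhood $B_L:=\{[\gamma]\in L_{rc}^\infty([0,1],C_K^r(\mathbb{R}^n,\mathbb{R}^n)):\|\gamma\|_{L_{rc}^\infty,\alpha}<L\}$ for $0<L<1$ and repeat the fixed-point construction: the operator $T([\gamma],x,\zeta)(t):=x+\int_0^t\gamma(s)(\zeta(s))\,ds$ is an $L$-contraction in $\zeta$ because the Mean Value Theorem gives the bound $\|\gamma\|_{L_{rc}^\infty,\alpha}\|\zeta_2-\zeta_1\|_\infty$ in place of the $L^p$-norm bound used in Theorem~\ref{text-main-3}. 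The Banach Fixed Point Theorem, the parametric fixed-point lemma \cite[Lemma~6.2]{GL2}, and the standard exponential laws then produce a continuous map $\tilde S\colon B\to C([0,1],\mathcal V_K)$, defined on some $0$-neighborhood $B\subseteq B_L$, whose values solve the defining integral equation of Remark~\ref{redif}; this establishes local $L_{rc}^\infty$-semiregularity of $\mathcal V_K$.

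Globalising and proving continuity of the evolution map requires the obvious $L_{rc}^\infty$-analogues of Lemma~\ref{resemiregular} and Lemma~\ref{regularity}. These go through because the subdivision maps $[\gamma]\mapsto[\gamma_{n,k}]$ preserve membership in $L_{rc}^\infty$ (scaling and restriction preserve compact metrizable closure of the image) and because $\|\gamma_{n,k}\|_{L_{rc}^\infty,\alpha}\leq \tfrac{1}{n}\|\gamma\|_{L_{rc}^\infty,\alpha}\to 0$ uniformly in $k$. Theorem~\ref{teocontinuity} is already stated for $L_{rc}^\infty$ under the hypothesis that the modelling space be integral complete, which holds for the Banach space $C_K^r(\mathbb{R}^n,\mathbb{R}^n)$; combined with the continuity of the restricted right action verified in Remark~\ref{redif}, it upgrades continuity of $\Evol_C$ to continuity of $\Evol\colon L_{rc}^\infty([0,1],T_e\Dif)\to AC_{L_{rc}^\infty}([0,1],\Dif)$.

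The main obstacle is purely bookkeeping: one must check that every continuity or smoothness statement in the $L^p$-proof that invoked Lemma~\ref{lcfunction} or Lemma~\ref{subdiv} has a valid $L_{rc}^\infty$-counterpart. Lemma~\ref{lemmadif} replaces the first, and the scaling/restriction argument above replaces the second; everything else is literally unchanged.
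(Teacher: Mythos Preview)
Your proposal is correct and follows essentially the same route as the paper's proof: both replace Lemma~\ref{lcfunction} by Lemma~\ref{lemmadif}, redo the Mean Value Theorem estimate with the $L_{rc}^\infty$-norm to get the contraction, and then defer to the argument of Theorem~\ref{text-main-3} together with Lemma~\ref{regularity} and the $L_{rc}^\infty$-clause of Theorem~\ref{teocontinuity}. You are in fact slightly more careful than the paper in explicitly checking that the subdivision maps of Lemma~\ref{subdiv} preserve $L_{rc}^\infty$ and drive the seminorms to zero, which the paper uses implicitly when invoking Lemma~\ref{resemiregular} and Lemma~\ref{regularity}.
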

\begin{proof}
For $0<L<1$, we define the open ball centered in $0\in L_{rc}^\infty([0,1],C_K^r(\Rn,\Rn))$ via
\[ B_L := \{ \gamma\in L_{rc}^\infty([0,1],C_K^r(\Rn,\Rn)): \lVert \gamma \lVert_{L_{rc}^\infty,\alpha} := \esssup_{t\in [0,1]} \alpha(\gamma(t)) < L \} \]
We will follow the same steps and notation as Proposition \ref{main-3}. Since the evaluation map $\varepsilon$ is $C^{r,\infty}$, by Lemma \ref{lemmadif}, the map 
\[ \Phi_n : L_{rc}^\infty([0,1],C_K^r (\Rn,\Rn))\times C([0,1],\Rn)\to L_{rc}^\infty([0,1],\Rn),\quad \Phi_n(\gamma,\zeta):= \varepsilon\circ ([\gamma,\zeta]) \]
is $C^r$. Moreover, the map 
\[ T:B_L\times \Rn\times C([0,1],\Rn)\to C([0,1],\Rn),\quad (\gamma,x,\zeta)\mapsto T(\gamma,x,\zeta),\] 
given by
\[ T(\gamma,x,\zeta)(t):= x+ \int_0^t \gamma(s)\left( \zeta(s)\right)ds, \quad \forall t\in [0,1].\]
is a $C^{\infty,\infty,r}$-map. Let $\gamma\in B_L$ and $x\in \Rn$ be fixed, for $\zeta_1, \zeta_2 \in C([0,1],\Rn)$, by the Mean Value Theorem we have
\[ \gamma(t)\circ \zeta_2(t) -\gamma(t)\circ\zeta_1 (t) = \int_0^1 \Big( D\gamma (t) \Big) \Big(\zeta_1(t)+s(\zeta_2(t)-\zeta_1 (t))\Big)\Big(\zeta_2(t)-\zeta_1(t)\Big)ds. \]
Therefore
\begin{align*}
\lVert T(\gamma,x,\zeta_2)(t) -   T(\gamma,x,\zeta_1)(t)\lVert_{\infty} 
& \leq \int_0^t\left\lVert \gamma(t)\circ \zeta_2(t) -\gamma(t)\circ\zeta_1 (t) \right\lVert_\infty dt\\
&\leq \int_0^t\int_0^1 \left\lVert \Big( D\gamma (t) \Big) \Big(\zeta_1(t)+s(\zeta_2(t)-\zeta_1 (t))\Big)\Big(\zeta_2(t)-\zeta_1(t)\Big)\right\lVert_\infty ds \\
& \leq \lVert D\gamma(t) \lVert_{\mathcal{L}^\infty, \lVert\cdot\lVert_{op}} \lVert \zeta_2 - \zeta_1 \lVert_{\mathcal{L}^\infty, \lVert\cdot\lVert_{\infty}} \\
& = \alpha(\gamma(t))\lVert \zeta_2 -  \zeta_1 \lVert_{\mathcal{L}^\infty, \lVert\cdot\lVert_{\infty}}.
\end{align*}
Hence 
\[ \lVert \Phi_n(\gamma,\zeta_2) -  \Phi_n(\gamma,\zeta_1)\lVert_{L_{rc}^\infty,\infty} \leq \lVert \gamma \lVert_{L_{rc}^\infty,\alpha}\lVert \zeta_2-\zeta_1\lVert_{\mathcal{L}^\infty,\lVert\cdot\lVert_{\infty}}.\]
Thus, them map $T(\gamma,x,\cdot)$ is a $C^r$-map and a $L$-contraction. Following proof of $L^p$-regularity of $\Dif(\Rn)$, we can show that $\Dif(\Rn)$ is also $L_{rc}^\infty$-semiregular. Moreover, there exists a $0$-neighborhood $B$ in $L_{rc}^\infty([0,1],T_e \Dif(\Rn))$ such that the restricted map evolution map
\[\Evol_C|_B : B\to C ([0,1],\Dif(\Rn)),\quad \gamma\mapsto \Evol_C(\gamma)\] 
is continuous. By Lemma \ref{regularity} the evolution map with continuous values $\Evol_C$ is continuous.Hence, by Theorem \ref{teocontinuity}, the evolution map
\[\Evol: L_{rc}^\infty([0,1],C_K^r(\Rn,\Rn)) \to AC_{L_{rc}^\infty}([0,1], \mathcal{V}_K), \quad \gamma\mapsto \Evol(\gamma)\]
is continuous.
\end{proof}

\section{$L^p$-Semiregularity of $\Diff(M)$}
Let $r\in \N$ and $1\leq p<\infty$ and $M$ be a compact smooth manifold. In this section we will show that the Banach half-lie group of all $C^r$-diffeomorphisms of $M$, denotated by $\Diff(M)$, is $L^p$-semiregular with evolution map
\[ \Evol:L^p([0,1],T_{\id} \Diff(M))\to AC_{L^p}([0,1],\Diff(M)),\quad \gamma\mapsto \Evol(\gamma)\]
continuous. 

\begin{remark}
    Let $r\in N$ and $M$ be a compact smooth manifold, we denote the identity map of $M$ by $\id:M\to M$, its tangent bundle by $\pi_{TM}:TM\to M$ and the space of all $C^r$-vector fields as
    \[ \mathcal{X}^r(M) = \{ X\in C^r(M,TM) : \pi_{TM}\circ X = \id\}.\]  If $g$ is a smooth Riemannian metric on $M$, we denote its exponential map by
\[ \exp:\mathcal{D}\subseteq TM\to M\times M,\quad (p,v)\mapsto \exp_p(v).\]
Let $\mathcal{W}\subseteq TM$ be an open neighborhood of the zero-section such that the map
\[ (\pi_{TM},\exp):\mathcal{W}\subseteq TM \to \mathcal{W}'\subseteq M\times M,\quad v\mapsto (\pi_{TM}(v),\exp(v) ),\]
is a $C^\infty$-diffeomorphism. We notice that if $\mathcal{W}_p:=\mathcal{W}\cap T_pM$, then the map
\[\exp_p|_{\mathcal{W}_p}:\mathcal{W}_p\subseteq T_pM \to \exp_p\left(\mathcal{W}_p\right)\subseteq M \]
is a $C^{\infty}$-diffeomorphism for each $p\in M$. Therefore, if $(p,q)\in \mathcal{W}'$, then
\[ (\pi_{TM},\exp)^{-1}(p,q)=\left(\exp_p|_{\mathcal{W}_p}\right)^{-1}(q).\]
\end{remark}

\begin{definition}\label{diffcharts}
    Let $r\in \N$ and $M$ be a compact smooth manifold without boundary endowed with a Riemannian metric $g$. We define the set of all $C^r$-diffeomorphisms of $M$ by
    \[ \Diff(M)=\{\phi:M\to \phi \mid \text{ $\phi$ is an $C^r$-diffeomorphism}\}.\] 
    Then, under the composition, $\Diff(M)$ is a group with neutral element given by the identity map $\id:M\to M$. For each $\phi \in \Diff(M)$, we define the vector space of $C^r$-sections
    \[\Gamma_{C^r}(\phi):=\{ X \in C^r(M,TM) : \pi_{TM} \circ X = \phi \}.\]
    If $\nabla$ is the Levi-Civita connection of $(M,g)$, for each $X\in \Gamma_{C^r}(\phi)$ we define
    \[ \nabla X:TM\to TM,\quad (p,v)\mapsto \nabla_v X(p). \] 
    For each $p\in M$ we define
    \[ \lVert X\lVert_p = \sup_{\substack{v \in T_pM \\ \|v\|_g = 1}} \lVert \nabla_v X(p)\lVert_g, \quad\forall X\in \Gamma_{C^r}(\phi).\] 
    This enable us to define the norm
    \[ \lVert X\lVert = \sum_{i=1}^r \sup_{p\in M} \lVert \nabla^{(r)}X\lVert_p,\quad \forall X\in \Gamma_{C^r}(\phi).\] 
    Therefore, $(\Gamma_{C^r}(\phi),\lVert\cdot\lVert)$ is a Banach space. We define the open $0$-neighborhood 
    \[ \mathcal{V}_\phi := \{ X\in \Gamma_{C^r}(\phi) : X(M)\subseteq \mathcal{W}\},\]
    and the set
    \[ \mathcal{U}_\phi := \{ \exp\circ X \in \Diff(M) : X\in \mathcal{V}_\phi \}.\]
    The map
    \[ \Psi_\phi:\mathcal{U}_\phi \to \mathcal{V}_\phi,\quad \Psi_\phi(\phi)(p):=\left(\exp_p|_{\mathcal{W}_p}\right)^{-1}(\phi(p)).\]
    is a bijection with inverse map given by
    \[ \Psi_\phi^{-1}:\mathcal{V}_\phi \to \mathcal{U}_\phi,\quad X\mapsto \exp\circ X.\]
    We endow $\Diff(M)$ with the topology such that the charts
    \[ \{ \Psi_\phi^{-1}:\mathcal{U}_\phi\to \mathcal{V}_\phi \mid \phi\in \Diff(M)\}\] 
    are smooth diffeomorphisms. This manifold structure makes $\Diff(M)$ a Banach smooth manifold and a topological group.
\end{definition}

\begin{remark}
    For $\phi\in \Diff(M)$, we consider the tangent space $T_\phi\Diff(M)$ as the set of equivalence class of smooth curves $\alpha:I\subseteq \R\to \Diff(M)$ with $\alpha(0)=\phi$.\\
    Let $p\in M$, we denote the evaluation map by
    \[ \varepsilon_p:\Diff(M)\to M,\quad \phi\mapsto \phi(p),\]
    which is $C^r$. Then
    \[ T_\phi\varepsilon_p:T_\phi\Diff(M)\to TM,\quad [\alpha]\mapsto T_\phi\varepsilon_p([\alpha])=[\varepsilon_p\circ \alpha].\]
    For $[\alpha]\in T_\phi\Diff(M)$, we define the $C^r$-section
    \[ X_{[\alpha]}:M\to TM,\quad p\mapsto [\varepsilon_p\circ \alpha].\]
    with $\pi_{TM}\circ X_{[\alpha]}= \phi$, hence $X_{[\alpha]}\in \Gamma_{C^r}(\phi)$. Therefore, the linear map
    \[ \Lambda_\phi:T_\phi \Diff(M) \to \Gamma_{C^r}(\phi),\quad [\alpha]\mapsto  X_{[\alpha]}\]
    defines a isomorphism of vector spaces. From now on, we identify $T_\phi\Diff(M)\cong \Gamma_{C^r}(\phi)$. In particular, 
    \[ T_{\id} \Diff(M) \cong \mathcal{X}^r(M).\]
\end{remark}

\begin{remark}
    For $\phi\in \Diff(M)$, the right translation is given by 
    \[ \rho_\phi:\Diff(M)\to \Diff(M),\quad \psi\mapsto \psi\circ \phi,\] 
    and its tangent map is
    \[ T\rho_\phi: T\Diff(M)\to T\Diff(M),\quad (\psi,X_\psi)\mapsto (\psi\circ\phi, X_\psi\circ\phi).\] 
    Then, the manifold $\Diff(M)$ is a Banach right half-Lie group (see e.g. \cite{GL4}). Moreover, we state the following lemma.
\end{remark}    
    
\begin{lemma}\label{sigmadiffm}
    Let $r\in \N$ and $M$ be a compact smooth manifold. Then the map
    \[ \sigma:T_{\id}\Diff(M)\times \Diff(M)\to T\Diff(M),\quad (X,\phi)\mapsto (\phi,X\circ \phi)\]
    is continuous.
\end{lemma}

\begin{remark}
    Let $r\in \N$ and $1\leq p<\infty$. If $\gamma\in L^p([0,1],\mathcal{X}^r(M))$ and $\eta\in AC_{L^p}([0,1],G)$, then 
    \[ \gamma(t).\eta(t) = \gamma(t)\circ \eta(t),\quad \forall t\in [0,1].\] 
    Since we are dealing with differential equation in Banach smooth manifolds, we need to recall some results.
\end{remark}

\begin{definition}
Let $(E,\lVert\cdot\lVert_{E})$ be a normed space and $U\subseteq E$ be a subset. We say that a function $f:[a,b]\times U\to E$ satisfies an $L^1$-Lipschitz condition if there exists a function $g\in L^1([a,b])$ such that
\[ \lVert f(t,x)-f(t,y)\lVert_E \leq g(t)\lVert x-y\lVert_E,\quad \forall x,y\in U.\]
\end{definition}
\begin{definition}
Let $M$ be a $C^1$-manifold modeled on a normed space $E$, $J\subseteq \R$ be a non-degenerate interval. We consider the function
\[ f:J\times M\to TM,\quad (t,p)\mapsto f(t,p)\in T_p M.\] 
We say that $f$ satisfies a local $L^1$-Lipschitz condition if for all $(t_0,p)\in J\times M$, there exists a chart $\kappa:U_\kappa\to V_\kappa$ of $M$ on $p$ and a relatively open subinterval $[a,b]\subseteq J$, which is a neighborhood of $t_0$ in $J$, such that the map
\[ f_\kappa :[a,b]\times V_\kappa\to E,\quad (t,x)\mapsto d\kappa\left( f(t,\kappa^{-1}(y))\right) \]
satisfies an $L^1$-Lipschitz condition.
\end{definition}

\begin{remark}\label{reuni}
Let $M$ be a smooth manifold modeled in the Banach space $E$. We consider the map
\[ f:[0,1]\times M\to TM,\quad (t,p)\mapsto f(t,p)\in T_p M,\]
satisfying a local $L^1$-condition. If $\tau, \eta:[0,1]\to M$ are two $AC_{L^1}$-Carath\'eodory solutions to
\[\dot{y}=f(t,y) \]
satisfying $\tau(t_0)=\eta(t_0)$ for some $t_0\in [0,1]$, then $\tau=\eta$ \cite[Proposition 10.5]{GL2}.\\
Let $p\in M$. If the initial value problem
\begin{align*}
    y'(t)&=f(t,y(t)),\quad t\in [0,1]\\
    y(0)&=p
\end{align*}
has a (necessarily unique) solution $\eta_p:[0,1]\to M$, then we say that $f$ admits a global flow 
\[ F_f:[0,1]\times M\to M,\quad (t,p)\mapsto \eta_p(t).\]
\end{remark}

In a compact smooth manifold, the $C^r$-analogue of \cite[Proposition 11.4]{GL2} holds with the same proof.
\begin{theorem}\label{flow}
Let $r\in \N$, $1\leq p <\infty$ and $M$ be a compact smooth manifold. If $\gamma\in L^p([0,1],\mathcal{X}^r(M))$, then map
\[ \overline{\gamma}:[0,1]\times M\to TM, \quad \overline{\gamma}(t,q):=\gamma(t)(q)\]
satisfies a local $L^1$-Lipschitz condition. Let $\eta\in AC_{L^p}([0,1],\Diff(M))$ with $\eta(0)=\text{id}_M$. Then $\eta=\Evol(\gamma)$ if, and only if, $\overline{\gamma}$ admits global flow $F_{\overline{\gamma}}$ for initial time $t_0=0$ and 
\[ \eta(t)(p)=F_{\overline{\gamma}} (t,q),\quad \forall t\in [0,1], \forall q\in M.\]
\end{theorem}

Analogously to the proof of $L^p$-regularity of the Banach Lie group $\Difff(M)$ (see \cite[Section 11]{GL2}), we have the following result.
\begin{theorem}\label{text-main-4}
Let $r\in N$, $1\leq p <\infty$ and $M$ be a compact smooth manifold. Then the right half-Lie group $\Diff(M)$ is $L^p$-semiregular with evolution map
\[ \Evol:  L^p([0,1],T_{\id}\Diff(M))\to AC_{L^p}([0,1],\Diff(M)),\quad \gamma\mapsto \eta_{\gamma} \]
is continuous.
\end{theorem}
\begin{proof}
The half-Lie group $\Diff(M)$ is $L^p$-semiregular if for each $\gamma\in L^p([0,1],\mathcal{X}^r(M)$, there exists $\eta_\gamma\in AC_{L^p}([0,1],\Diff(M))$ such that
\begin{align*}
    \dot\eta_\gamma(t)&=\gamma(t)\circ \eta_\gamma(t),\quad \forall t\in [0,1],\\
    \eta_\gamma(0)&=\id. 
\end{align*}
Let $\phi\in\Diff(M)$. We consider a finite cover $(U_i)_{i=1}^n$ of open and relatively compact subsets of $M$ and charts 
\[ \kappa_i:U_i\subseteq M \to B_5(0)\subseteq \R^n,\] 
such that the family of open sets $\kappa_i^{-1}(B_1(0))$ cover $M$ and $U_i \subseteq \phi^{-1}(U_{\psi_i})$ for some chart $\psi_i:U_{\psi_i}\to V_{\psi_i}$ of $M$ (see e.g. \cite{Lang}). For each $X\in \Gamma_{C^r}(\phi)$, we define the map
\[ X_i := d\psi_i \circ X \circ \kappa_i^{-1}:B_5(0)\to \R^m,\quad x\mapsto d\psi_i(X(\kappa_i(x))). \]
Then, for each $\ell\in [1,5]$ the map
\[ \rho_\ell:\Gamma_{C^r}(\phi)\to  \prod_{i=1}^n  C^r(B_\ell(0),\R^m),\quad X\mapsto \rho_\ell(X):=\left(X_i|_{B_\ell(0)}\right)_{i=1}^n \]
is a linear topological embedding with closed image (see e.g. \cite{AGS}).\\ 
Considering $\phi=\id$, by corresponding identifications of product spaces, we obtain the following linear topological embeddings with closed image
\[ R_5:=L^p([0,1], \rho_5):L^p([0,1],\mathcal{X}^r(M))\to \prod_{i=1}^n L^p([0,1],C^r(B_5(0),\R^m)), \quad \gamma\mapsto \rho_5\circ \gamma\]
and
\[ R_1:=C([0,1], \rho_1):C([0,1],\mathcal{X}^r(M))\to \prod_{i=1}^n C([0,1],C^r(B_1(0),\R^m)), \quad \gamma\mapsto \rho_1\circ \gamma.\]
First, we wil study the $L^p$-regularity of $C^r(B_5(0),\R^m)$. Let 
\[q:C^r(B_5(0),\R^m)\to [0,\infty),\] 
be the seminorm given by
\[ q(\phi):= \sup_{x\in \overline{B}_4(0)} \left( \lVert D\phi(x) \lVert_{op}+\lVert \phi(x)\lVert_\infty\right), \quad \forall\phi\in C^r(B_5(0),\Rn). \]
Let $0<L<1$, we consider the open ball with respect to $q$
\[B_L:=\{ \gamma\in L^p([0,1],C^r(B_5(0),\R^m)) : \lVert \gamma \lVert_{L^p,q} < L \}. \]
Following the case of $\Dif(\Rn)$, for $\gamma\in B_L$ and $x\in B_3(0)$ fixed, we need an $AC_{L^p}$-Carath\'{e}odory solution $\zeta:[0,1]\to B_4(0)$ that verifies the integral equation
\[ \zeta'(s)=x+\int_0^t \gamma(s)\left(\zeta(s)\right) ds,\quad \forall t\in [0,1].\]
We consider the continuous linear map $J:L^p([0,1],C^r(B_5(0),\R^m))\to C([0,1],C^r(B_5(0),\R^m))$ given by
\[ J(\xi)(t)=\int_0^t \xi(s)ds,\quad \forall t\in [0,1].\]
The evaluation map
\[\varepsilon:C^r(B_5(0),\R^m)\times B_5(0)\to\Rm,\quad (f,x)\mapsto f(x)\]
which is $C^r$. Therefore, By Lemma $\ref{lcfunction}$, the map
\[\widetilde{\varepsilon}:L^p([0,1],C^r(B_5(0),\R^m))\times C([0,1],B_5(0))\to L^p([0,1],\Rm),\quad (\gamma,\eta)\mapsto \varepsilon\circ (\gamma,\eta)\]
is $C^r$. This enable us to define the $C^r$-map 
\[T:B_L\times B_3(0)\times C([0,1],B_4(0))\to C([0,1],B_4(0))\] 
defined as
\[ T(\gamma, x, \zeta )(t):=x+\int_0^t \gamma(s)\left(\zeta(s)\right) ds,\quad t\in [0,1].\]
Moreover, following the steps of Theorem \ref{text-main-3}, for $\gamma\in B_L$ and $x\in B_3(0)$, we can show that
\[  \lVert T(\gamma,x,\zeta_2 ) - T(\gamma,x,\zeta_1 )\lVert_\infty\leq L\lVert \zeta_2-\zeta_1\lVert_\infty,\quad \forall \zeta_1,\zeta_2\in  C([0,1],B_4(0)).\]
Thus, by Banach's Fixed Point Theorem, for each $(\gamma,x)\in B_L\times B_3(0)$ the map $T(\gamma,x,\cdot) $ has a unique fixed point $\zeta_{\gamma,x}\in C([0,1],B_4(0))$ and
\[ F:B_L\times B_3(0) \to C([0,1],B_4(0)),\quad (\gamma,x)\mapsto \zeta_{\gamma,x}\]
is $C^{r}$ (see \cite[Lemma 6.2]{GL2}). By the exponential law, we have 
\begin{itemize}
\item[i)] $F_2: \left( B_L\times B_3(0)\right) \times [0,1] \to B_4(0)$, $(\gamma,t,x)\mapsto \zeta_{\gamma,x}(t)$ is $C^{r, 0}$. 
\item[ii)] $ F_3: B_L\times [0,1]\to C^r(B_3(0),B_4(0))$, $(\gamma,t)\mapsto \zeta_{\gamma}(t) $ is $C^{r, 0}$.
\item[iii)] $ F_4: B_L \to C([0,1],C^r(B_3(0),B_4(0)))$, $\gamma\mapsto \zeta_{\gamma} $ is continuous.
\end{itemize} 
Considering the continuous map
\[ \rho_2:C^r(B_3(0),B_4(0))\to C^r(B_2(0),B_4(0)),\quad \varphi\mapsto \varphi|_{B_2(0)}\]
we define continuous map
\[ H:=C([0,1],\rho_2)\circ F_4 : B_L \to C([0,1],C^r(B_2(0),B_4(0)), \quad \gamma\mapsto \rho_2\circ \zeta_{\gamma}. \]
Then
\[ H(\gamma)\in AC_{L^p}( [0,1],C^r(B_2(0),B_4(0)),\quad \forall \gamma\in B_L.\]
Indeed, let us consider the open set 
\[ \lfloor \overline{B_2(0)},B_4(0) \rfloor_r :=\{\varphi\in C^r(B_3(0),\R^m):\varphi(\overline{B_2(0)})\subseteq B_4(0)\},\] 
and the map
\[ S: C^r(B_5(0),\Rn) \times \lfloor \overline{B_2(0)},B_4(0) \rfloor_r  \to C^r(B_2(0),\R^m),\quad (\psi,\varphi)\mapsto \psi\circ \varphi \]
Then $S$ is continuous and linear in the first variable. By Lemma \ref{lcfunction}, the map
\begin{align*}
    \tilde{S}: L^p([0,1],C^r(B_5(0),\Rn))\times C([0,1], \lfloor\overline{B_2(0)},B_4(0) \rfloor_r)&\to L^p([0,1],C^r(B_2(0),\R^m))\\
    (\alpha,\beta)&\mapsto S\circ (\alpha,\beta)
\end{align*}
is continuous. If $\gamma\in B_L$, then
\[\widetilde{S}(\gamma,H(\gamma))=\gamma\circ H(\gamma)=\gamma.H(\gamma) \in L^p([0,1],C^r(B_2(0),\R^m)). \]
This allow us to define the $L^p$-absolutely continuous function $\tau:[0,1]\to C^r(B_2(0),\R^m)$ given by
\[ \tau(t) =\text{id}_{B_2(0)}+\int_0^t \gamma(s) \circ  H(\gamma)(s)ds,\quad \forall t\in[0,1]. \]
For each $x\in B_2(0)$, we denote the evaluation map 
\[ \varepsilon_x:C^r(B_2(0),\R^m)\to \R^m,\quad f\mapsto f(x).\]
Then $\{ \varepsilon_x :x\in B_2(0)\}$ separate points in $C^r(B_2(0),\R^m)$. Therefore, we have
\begin{align*}
    \varepsilon_x\left(\tau(t)\right) &= x+\int_0^t \gamma(s)\circ H(\gamma)(s)(x)ds\\
    &= x+\int_0^t \gamma(s)\circ \zeta_{\gamma,x}(s)ds \\
    &=\zeta_{\gamma,x}(t) \\
    &= H(\gamma)(x)(t).
\end{align*}
Thus, $\varepsilon_x(\tau)=\varepsilon_x(H(\gamma))$ and in consequence, $\tau=H(\gamma)$. Hence, the half-lie group $C^r(B_2(0),B_4(0))$ is $L^p$-semiregular and
\[  \Evol|_{B_L}:B_L \to AC_{L^p}([0,1],C^r(B_2(0),B_4(0)), \quad \gamma\mapsto H(\gamma).\]
Now we study the case for $L^p ([0,1], \mathcal{X}^r(M))$. We define the $0$-neighborhood of $L^p ([0,1], \mathcal{X}^r(M))$
\begin{align*}
\mathcal{B}_L&:= R_5^{-1} \left( \prod_{i=1}^n B_L \right) \\
&= \{ \gamma\in L^p ([0,1], \mathcal{X}^r(M)) : (\forall i\in \{1,...,n\})\text{ } \gamma_i \in B_L \}.
\end{align*}
Then $\mathcal{B}_L$ is open by continuity of $R_5$. Let $\gamma\in \mathcal{B}_L$, with $\gamma_i\in B_L$ for each $i\in\{1,...,n\}$. For each $m\in M$, there exists a chart $\kappa_j:U_j\to B_5(0)$ such that $m\in \kappa_j^{-1}(B_3(0))$ for some $j\in \{1,...,n\}$. Let $\zeta_{\gamma,\kappa_j(m)}\in AC_{L^P}([0,1], C^r(B_2(0),B_4(0))$ be the solution of the integral equation
\[ \zeta_{\gamma,\kappa_j(m)}(t)=\kappa_j(m)+\int_0^t \gamma_j(s)\left(  \zeta_{\gamma,\kappa_j(m)}(s)\right)ds,\quad \forall t\in [0,1]. \]
In other words, $ \zeta_{\gamma,\kappa_j(m)}$ is a $AC_{L^p}$-Carath\'eodory solution to
\begin{align*}
    x'(t)&=\gamma_j(t)\circ x(t),\quad t\in[0,1], \\
    x(0)&=\kappa_j(m).
\end{align*}
This implies that the function
\[\eta_{\gamma,m}:[0,1]\to M,\quad \eta_{\gamma,m}(t):=\kappa_j^{-1}\circ \zeta_{\gamma,\kappa_j(m)}(t)\] 
is a $AC_{L^p}$-Carath\'eodory solution of the equation
\begin{align*}
    y'(t)&=\gamma(t)\left( y(t)\right),\quad t\in[0,1], \\
    y(0)&=m.
\end{align*}
By Theorem \ref{flow}, the map $\tilde{\gamma}$ satisfies a local $L^1$-Lipschitz condition. By Remark \ref{reuni}, the solution of this equation is unique and therefore, $\eta_{\gamma,m}$ is well defined. Moreover, $\tilde{\gamma}$ admits a global flow for initial time $t_0=0$, given by
\[ F_{\overline{\gamma}}(t,m)=\eta_{\gamma,m}(t),\quad \forall t\in [0,1], \forall m\in M.\]
Let $i\in \{1,...,n\}$. We can construct an exponential map on $B_5(0)$
\[\exp_i:\mathcal{D}_i\subseteq TB_5(0)\to B_5(0)\]
such that
\[ \kappa_i^{-1}\circ \exp_i = \exp\circ T\kappa_i^{-1}|_{\mathcal{D}_i}.\]
Moreoveor, we consider an open set $\mathcal{O}_i\subseteq \mathcal{D}_i$ containing $\overline{B_4(0)}\times \{0\}$ such that $(\text{pr}_1,\exp_i)(O_i)$ is open in $B_5(0)\times B_5(0)$ and the map
\[ \Theta_i:=(\text{pr}_1, \exp_i)|_{\mathcal{O}_i}:\mathcal{O}_i\subseteq TB_5(0)\to B_5(0)\times B_4(0), \quad (p,v)\mapsto (p,\exp_i(p,v))\]
is a $C^\infty$-diffeomorphism onto its image. Assume that $T\kappa_i^{-1}(\mathcal{O}_i)\subseteq \mathcal{W}$. Since
\[ \{(x,x):x\in \overline{B_4(0)}\}\subseteq (\text{pr}_1,\exp_i)(\mathcal{O}_i),\]
there exists $0<s_i\leq 1$ such that
\[ \bigcup_{x\in B_4(0)} \{x\}\times B_{s_i}(x) \subseteq (\text{pr}_1,\exp_i)(\mathcal{O}_i). \]
Therefore, there exists a $0<s_i\leq 1$, such that $\Theta_i^{-1}$ restrict to a smooth diffeomorphism of the form 
\[(\text{Id}_{B_4(0)},\theta_i):\bigcup_{x\in B_4(0)} \{x\}\times B_{s_i}(x)\subseteq B_5(0)\times B_5(0) \to \Theta_i^{-1}\left( \bigcup_{x\in B_4(0)} \{x\}\times B_{s_i}(x)\right)\subseteq \mathcal{O}_i\]
with open image in $\mathcal{O}_i$. Let $x\in B_4(0)$ be fixed, we define the set
\[ \mathcal{O}_{i,x}:=\{ y\in \R^m : (x,y)\in \mathcal{O}_i \}.\]
Then, we notices that
\[ \theta_i(x,\cdot)=\left( \exp(x,\cdot)|_{\mathcal{O}_{i,x}} \right) ^{-1}|_{B_{s_i}(0)}.\] 
The set
\[Z_{i,x} := \left\{ \varphi \in C^r(B_2(0),\Rn) : \varphi\left(\overline{B_1(0)}\right)\subseteq \bigcup_{x\in \overline{B_1(0)}}  B_{s_i}(x) \right\}\]
is open in $C^r(B_2(0),\R^m)$ with the compact-open topology. The smoothness of $\theta_i$ implies that the map
\[ (\theta_i)_* : Z_{i,x}\subseteq C^r(B_2(0),\R^m) \to C^r(B_1(0),\R^m),\quad \varphi\mapsto (\theta_i)_*(\varphi):=\theta_i\circ (\text{id}_{B_1(0)},\varphi)\]
is smooth (see e.g. \cite[Proposition 4.23]{GL4}). Let
\[ \Psi:\mathcal{U}\to \mathcal{V},\quad \Psi(\phi)(p)=\left(\exp_p|_{\mathcal{W}_p}\right)^{-1}(\phi(p))\] 
be a chart on $\text{id}_M\in \Diff(M)$. Since $\rho_1$ is a topological embedding, we can assume that
\[ \mathcal{V}:=\rho_1^{-1}\left(\prod_{i=1}^n \mathcal{V}_i \right)\]
for suitable open $0$-neighborhoods $\mathcal{V}_i \subseteq C^r(B_1(0),\R^m)$. \\
Since $(\theta_i)_*(\text{id}_{B_2(0)})=0$, by continuity of $(\theta_i)_*$, there exists open $\text{id}_{B_2(0)}$-neighborhoods $\mathcal{Y}_i \subseteq Z_{i,x}$  such that \[ (\theta_i)_*(\mathcal{Y}_i)\subseteq \mathcal{V}_i.\] 
We notices that, if $\text{id}_{B_2(0)}:B_2(0)\to B_4(0)$ denotes the inclusion map, then
\[H(0):[0,1]\to C^r(B_2(0),B_4(0)),\quad t\mapsto \id|_{B_2(0)}.\] 
By continuity of $H$, there exists open $0$-neighborhoods $\mathcal{P}_i \subseteq \mathcal{B}_L$ such that
\[ H(\mathcal{P}_i)\subseteq C([0,1],\mathcal{Y}_i)\] 
Denoting
\[ \mathcal{P}:= R_5^{-1}\left(\prod_{i=1}^n \mathcal{P}_i \right), \]
we see that $\mathcal{P}$ is an open $0$-neighborhood in $ L^p([0,1],\mathcal{X}^r(M))$ with $\mathcal{P} \subseteq \mathcal{B}_L$.\\  
This allow us to do the composition
\[ (\theta_i)_*\circ H: \mathcal{P}_i \subseteq L^p([0,1],C^r(B_5(0),\R^m)\to C([0,1],C^r(B_1(0),\R^m). \]
Analogously to the case $\text{Diff}_c^\infty(M)$, the same proof of \cite[Points $11.18$ and $11.19$]{GL2}, we can show that for each $\gamma\in \mathcal{P}$, there exists an unique $\theta_{\gamma}\in C([0,1],\mathcal{X}^r(M))$ such that
\[ R_1(\theta_{\gamma})=\Big( (\theta_i)_* ( H([\gamma_i]))\Big)_{i=1}^n. \]
Then $\rho_1\left (\theta_{\gamma}(t)\right) \in \prod_{i=1}^n  \mathcal{V}_i$, whence $\theta_{\gamma}(t)\in \mathcal{V}$. In consequence  
\[ \Psi^{-1}(\theta_{\gamma}(t)) = (\exp\circ \theta_{\gamma})(t) \in \Diff(M),\quad \forall t\in [0,1].\]
Let $m\in M$, denoting $x:=\kappa_i(m)$, we notices that
\begin{align*}
    (\exp\circ \theta_{\gamma}) (t)(m) &= \left(\kappa_i^{-1}\circ \exp_i \circ T\kappa_i\right) \circ \theta_{\gamma}(t) (\kappa_i^{-1}(x))\\
    &= \kappa_i^{-1}\circ \exp_i \circ (\theta_i)_*( H([\gamma_i])) (t) (x) \\
    &= \kappa_i^{-1}\circ \exp_i \circ \left(\exp_i|_{\mathcal{W}_i}\right)^{-1}\circ H([\gamma_i]) (t) (x)\\
    &= \kappa_i^{-1}\circ H([\gamma_i]) (t) (x)\\
    &= \kappa_i^{-1}\circ \zeta_{[\gamma_i],x}(t)\\
    &= F_{\tilde{\gamma}}(t,m).
\end{align*}
This enable us to define the function
\[ \eta_{\gamma}:=\exp\circ \theta_{\gamma}:[0,1]\to \Diff(M).\]
We will show that $\eta_{\gamma}$ is absolutely continuous. By smoothness of the exponential map, is enough to show that $\theta_{\gamma}$ is absolutely continuous. This is equivalent to show that
\[ \rho_1\circ \theta_{\gamma}:[0,1]\to \prod_{i=1}^n C^r(B_1(0),\R^m),\quad t\mapsto \rho_1\circ \theta_{\gamma}(t).\]
is absolutely continuous. We notice that this map coincides with
\[ [0,1]\to \prod_{i=1}^n C^r(B_1(0),\R^m),\quad t\mapsto \left((\theta_i)_* \left(H([\gamma_i])\right)\right)_{i=1}^n (t)\]
Since $(\theta_i)_*$ is smooth and $H([\gamma_i])\in AC_{L^p}([0,1],C^r(B_2(0),B_4(0)))$, by Proposition \ref{propfon}, the composition and hence the product are absolutely continuous. Therefore
\[ \theta_{\gamma}\in AC_{L^p}([0,1],\mathcal{X}^r(M))\]
whence 
\[ \eta_{\gamma}\in AC_{L^p}([0,1],\Diff(M)).\]
In consequence, by Lemma \ref{resemiregular} and Remark \ref{reuni}, the right half-Lie group $\Diff(M)$ is $L^p$-semiregular. Since the map 
\[ B_L\to \prod_{i=1}^n C([0,1],C^r(B_1(0),\Rn)),\quad [\gamma_i]\mapsto \left((\theta_i)_*\left( H([\gamma_i])\right)\right)_{i=1}^n \]
is continuous, the restricted evolution map with continuous values is given by
\[ \Evol_C|_{\mathcal{P}}:\mathcal{P}\to AC_{L^p}([0,1],\Diff(M)),\quad \gamma\mapsto \eta_{\gamma}=\exp \circ \theta_{\gamma}.\]
Hence, by Lemma \ref{regularity} the evolution map $\Evol_C$ is continuous. Moreover, since $\sigma$ is continuous (see Lemma \ref{sigmadiffm}), by Theorem \ref{teocontinuity}, the evolution map
\[ \Evol:L^p([0,1],\mathcal{X}^r(M))\to AC_{L^p}([0,1],\Diff(M)),\quad \gamma\mapsto \Evol(\gamma)\]
is continuous.
\end{proof}

Now we will focus in the $L^p$-semiregularity of the case $\Diff(M)$ with $M$ a compact smooth manifold with boundary.
\begin{definition}
Let $M$ and $N$ be smooth manifolds with boundaries and suppose ${f:\partial M\to \partial N}$ is a diffeomorphism, We define adjunction space $M\cup_f N$ as the set formed identifying each $p\in \partial M$ with $f(p)\in \partial N$.
\end{definition}
\noindent
We recall \cite[Theorem 9.29]{JLee}.

\begin{theorem}\label{Lee 1}
Let $M$ and $N$ be a smooth manifolds with boundaries and ${f:\partial M\to \partial N}$ be a diffeomorphism. Then the adjunction space $M\cup_f N$ is a topological manifold without boundary which has a smooth manifold structure such that there are regular domains $M', N' \subseteq M\cup_f N$ diffeomorphic to $M$ and $N$, respectively, such that $M'\cup N'=M\cup_f N$ and ${M'\cap N'=\partial M'=\partial N'}$. Moreover, $M$ and $N$ are both compact if and only if $M\cup_f N$ is compact.
\end{theorem}

\begin{definition}\label{double}
Let $M$ be a smooth manifold with boundary. If $M'$ denotes a copy of $M$, we define the double of $M$ as the smooth manifold without boundary
\begin{equation}
 DM=M\sqcup_{\text{id}_\partial} M'
\end{equation}
where $\text{id}_\partial:\partial M \to \partial M'$ is the identity map.\\ 
For $p\in M$, we denote by $(p,0)$ and $(p,1)$ the corresponding element in $M$ and $M'$, respectively, and if $p\in \partial M$, then $(p,0)\sim (p,1)$. By Theorem \ref{Lee 1}, the map
\[ \rho: M\to DM,\quad p\mapsto [(p,0)] \]
is an embedding onto a regular domain of $DM$ which we identify with $M$.
\end{definition}

\begin{definition}
Let $r\in \N\cup\{0,\infty\}$ and $M$ be a smooth manifold with boundary. We define the vector space of $C^r$-stratified vector field on $M$ as
\[ \mathcal{X}_{str}^r(M):=\{ X\in \mathcal{X}^r(M) : X(\partial M)\subseteq T\partial M \}.\]
\end{definition}

\noindent
We recall \cite[Colorally 1.8]{GL5}.
\begin{prop}\label{exten prop}
    For each $k\in \N\cup\{0,\infty\}$, $n\in \N$, $m\in \{0,...,d\}$ and locally convex space $F$, the restriction map
   \[ \mathcal{E}:C^k(\R^d,F)\to C^k([0,\infty)^m\times \R^{d-m},F)\] 
    has a continuous linear right inverse. Moreover, the restriction map
    \[ C^k (\R^d,F)\to C^k([0,1]^d,F)\]
    has continuous linear right inverse.
\end{prop}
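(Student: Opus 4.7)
The plan is to reduce everything to a one-dimensional Seeley-type extension and then iterate across coordinates. The main step is to produce a continuous linear right inverse $E_1\colon C^k([0,\infty),F)\to C^k(\R,F)$ of the restriction map. Following Seeley's classical construction, I would fix a cut-off $\chi\in C^\infty(\R,\R)$ with $\chi\equiv 1$ on $[-1,1]$ and $\chi\equiv 0$ outside $[-2,2]$, a rapidly increasing sequence $(b_n)_{n\in \N}\subset (0,\infty)$, and coefficients $(a_n)\subset \R$ chosen so that the moment conditions $\sum_{n\in \N} a_n(-b_n)^{\ell}=1$ hold for every $\ell\in \N\cup\{0\}$ with $\ell\leq k$. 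For $f\in C^k([0,\infty),F)$ I would then set
\[
E_1(f)(t):=\begin{cases} f(t), & t\geq 0,\\ \displaystyle\sum_{n\in \N} a_n\,\chi(b_n t)\,f(-b_n t), & t<0.\end{cases}
\]
A term-by-term Taylor expansion at $t=0^-$ together with the moment conditions shows that all one-sided derivatives up to order $k$ match, while the cut-off and the rapid growth of $(b_n)$ make the series absolutely convergent in every $C^k$-seminorm. The construction is linear in $f$ and works verbatim for an arbitrary locally convex target $F$, since every estimate factors through a continuous seminorm on $F$.

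To handle the full restriction $\mathcal{E}\colon C^k(\R^d,F)\to C^k([0,\infty)^m\times \R^{d-m},F)$, I would apply $E_1$ in each of the first $m$ variables, treating the remaining variables as parameters. A $C^{k,k}$-type argument shows that this produces a continuous linear operator on the relevant multivariable $C^k$-spaces, because partial differentiations in the other variables commute with the series defining $E_1$. Composing the $m$ resulting maps yields the required continuous linear right inverse of $\mathcal{E}$.

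For the second part of the statement, I would first assemble a one-dimensional extension $E_0\colon C^k([0,1],F)\to C^k(\R,F)$ by combining $E_1$ at the left endpoint $0$ with its reflected analogue at the right endpoint $1$, then multiplying by a smooth cut-off equal to $1$ on $[0,1]$ and supported in a slightly larger interval. Iterating $E_0$ coordinate-by-coordinate as above gives a continuous linear right inverse of $C^k(\R^d,F)\to C^k([0,1]^d,F)$. The hardest part is the Seeley construction itself when $k=\infty$: one must choose $(a_n),(b_n)$ so that countably many moment conditions are simultaneously compatible and the series defining $E_1$ converges absolutely in every $C^k$-seminorm. This is the classical Seeley--Hestenes construction, and it transfers unchanged to the $F$-valued locally convex setting since it uses only linear operations and pointwise seminorm estimates.
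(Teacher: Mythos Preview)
The paper does not actually prove this proposition: the sentence preceding it reads ``We recall \cite[Corollary~1.8]{GL5}'', so the statement is simply quoted from Gl\"ockner's preprint on smoothing operators and extension operators, and no argument is given in the present paper. There is therefore nothing to compare your proof to at the level of this paper's own reasoning.

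That said, your outline is the classical Seeley--Hestenes route and is essentially correct. The one place where you are a little casual is the iteration step: when you apply $E_1$ in the first variable to a function in $C^k([0,\infty)^m\times\R^{d-m},F)$, you need to know that the result lies in $C^k(\R\times[0,\infty)^{m-1}\times\R^{d-m},F)$ and that the resulting operator is continuous. This is exactly where an exponential-law identification such as $C^k([0,\infty)^m\times\R^{d-m},F)\cong C^k([0,\infty),C^k([0,\infty)^{m-1}\times\R^{d-m},F))$ (or an equivalent $C^{k,k}$ argument, as you indicate) is required; once that is in place, the coordinate-by-coordinate iteration goes through. For the cube $[0,1]^d$, your idea of combining a left- and a right-endpoint Seeley extension with a compactly supported cut-off is also the standard device; one only has to make sure that the extension at $1$ is applied after the one at $0$ has been localized near $0$, so that the two constructions do not interact. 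None of this is a gap, just points where a full write-up would need a sentence or two more than your sketch provides.
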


\begin{prop}\label{de alpha}
Let $r\in \N\cup\{0,\infty\}$, $m\in \N$ and $M$ be an $m$-dimensional compact smooth manifold with boundary. Then there exists an extension map 
\[ \alpha: \mathcal{X}_{str}^r(M)\to \mathcal{X}^r(DM),\quad X\mapsto \alpha(X)\] 
which is continuous linear.
\end{prop}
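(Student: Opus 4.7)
The plan is to construct $\alpha$ chart-by-chart, using Proposition~\ref{exten prop} to extend components of $X$ across $\partial M$ and a smooth partition of unity on $M$ to patch the local extensions together. By compactness of $M$, choose a finite atlas $\{(U_i,\varphi_i)\}_{i=1}^N$ of $M$ in which each chart is either an interior chart (with $\varphi_i(U_i)$ open in $\R^m$) or a boundary chart $\varphi_i\colon U_i\to V_i\subseteq [0,\infty)\times\R^{m-1}$ sending $U_i\cap\partial M$ onto $V_i\cap(\{0\}\times\R^{m-1})$. By Theorem~\ref{Lee 1} and the construction in Definition~\ref{double}, each boundary chart extends canonically to a chart $\tilde\varphi_i\colon \tilde U_i\to \tilde V_i$ of $DM$, where $\tilde V_i$ is an open neighborhood of $V_i$ in $\R^m$ obtained by reflecting across $\{x_1=0\}$. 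Fix a smooth partition of unity $(\chi_i)_{i=1}^N$ on $M$ subordinate to $(U_i)$ with $\operatorname{supp}\chi_i$ compact in $U_i$, and for each boundary chart a fixed cutoff $\rho_i\colon\R^m\to[0,1]$ with $\rho_i\equiv 1$ on $\varphi_i(\operatorname{supp}\chi_i)$ and $\operatorname{supp}\rho_i$ compact in $\tilde V_i$.

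Given $X\in\mathcal{X}_{str}^r(M)$, decompose $X=\sum_{i=1}^N\chi_iX$ and construct $\alpha_i(X)\in\mathcal{X}^r(DM)$ separately for each term. For an interior chart, $\chi_iX$ has compact support inside $M\setminus\partial M\subseteq DM$, so extending by zero yields $\alpha_i(X)\in\mathcal{X}^r(DM)$. For a boundary chart, express $\chi_iX$ in the coordinates of $\varphi_i$ as $\sum_{j=1}^m f_i^j\,\partial_{x_j}$ with $f_i^j\in C^r(V_i,\R)$ compactly supported; extending by zero gives $f_i^j\in C^r([0,\infty)\times\R^{m-1},\R)$. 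Apply the continuous linear right inverse of the restriction $C^r(\R^m,\R)\to C^r([0,\infty)\times\R^{m-1},\R)$ provided by Proposition~\ref{exten prop} to obtain $\tilde f_i^j\in C^r(\R^m,\R)$, multiply by $\rho_i$, push the vector field $\sum_j(\rho_i\tilde f_i^j)\,\partial_{x_j}$ forward via $\tilde\varphi_i^{-1}$, and extend by zero to $DM$. Finally set $\alpha:=\sum_{i=1}^N\alpha_i$.

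Linearity in $X$ is immediate since every operation is linear. Continuity holds because each building block — coordinate representation, extension by zero, the right inverse from Proposition~\ref{exten prop}, multiplication by a fixed smooth cutoff, pushforward by a fixed diffeomorphism, and extension by zero to $DM$ — is continuous linear between the relevant Banach spaces of $C^r$-maps or $C^r$-sections, and a finite sum of continuous linear maps is continuous linear. To confirm that $\alpha$ genuinely extends $X$: on $V_i$ we have $\tilde f_i^j|_{V_i}=f_i^j$ because the operator from Proposition~\ref{exten prop} is a right inverse of restriction, and $\rho_i\equiv 1$ on $\varphi_i(\operatorname{supp}\chi_i)$, so $\alpha_i(X)|_M=\chi_iX$ and hence $\alpha(X)|_M=X$. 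The main subtlety is ensuring that each boundary chart $\varphi_i$ really extends to a legitimate chart $\tilde\varphi_i$ of $DM$ across $\partial M$; this is precisely what Theorem~\ref{Lee 1} and the identity-gluing prescribed in Definition~\ref{double} provide, since the atlases of the two copies of $M$ glue smoothly along $\partial M$.
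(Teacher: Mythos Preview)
Your proof is correct and follows essentially the same approach as the paper: localize with a partition of unity, use Proposition~\ref{exten prop} to extend coordinate components across the boundary in each boundary chart, push forward, and sum. The only cosmetic difference is that the paper places its partition of unity on $DM$ (subordinate to the cover $\{\widetilde{U_1},\ldots,\widetilde{U_k},M^\circ,(M')^\circ\}$) and multiplies after extending, whereas you place yours on $M$ and multiply before extending, compensating with the auxiliary cutoffs~$\rho_i$; your handling of the domain issue (extending the compactly supported $f_i^j$ by zero to all of $[0,\infty)\times\R^{m-1}$ before invoking Proposition~\ref{exten prop}) is in fact more careful than the paper's ``without loss of generality $V_i=[0,\infty)\times\R^{m-1}$''.
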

\begin{proof}
By compactness of $M$, we find charts $\varphi_i:U_i\to V_i$ of $M$ around points $p_i\in \partial M$ such that $(U_i)_{i=1}^k$ is a finite open cover of $\partial M$ which extends to charts $\widetilde{\varphi_i}:\widetilde{U_i}\to \widetilde{V_i}$ around $p_i$. For $X\in \mathcal{X}_{str}^r(M)$, we write
\[ Y_i:=d\varphi_i \circ X \circ \varphi_i^{-1}: V_i\subseteq [0,\infty)\times \R^{m-1}\to \R^m.\]
Without loss of generality, we assume that $V_i=[0,\infty)\times \R^{m-1}$ and we consider the extension $\widetilde{Y}_i:\Rm\to \Rm$ of $Y_i$ given by Proposition \ref{exten prop}. We define
\[ \widetilde{X}_i := T\widetilde{\varphi_i}^{-1}\circ \widetilde{Y}_i \circ \widetilde{\varphi_i} : \widetilde{U_i}\to T\widetilde{U_i}. \]
Then $\widetilde{X}_i \in \mathcal{X}^r(\tilde{U_i})$ and the map
\[ \Phi_1 : \mathcal{X}_{str}^r(M)\to \prod_{i=1}^k \mathcal{X}^r(\widetilde{U_i}),\quad X\mapsto (\widetilde{X}_i)_{i=1}^k \]
is continuous linear. Let us consider $\widetilde{U}_{k+1}:=M^\circ$, $\widetilde{U}_{k+2}:=(M')^\circ$ and the open cover of $DM$
\[\mathcal{A}:=\{\widetilde{U_1},...,\widetilde{U_k},\widetilde{U}_{k+1}, \widetilde{U}_{k+2}\}.\]  
Then there exists a partition of the unity subordinate to $\mathcal{A}$, denoted by $\{h_1,..., h_k, h_{k+1}, h_{k+2}\}$, such that $\text{supp}(h_i)\subseteq \widetilde{U_i}$ for each $i\in \{1,...,k,k+1,k+2\}$. \\
Denoting the space of all $C^k$-vector fields of $\widetilde{U}_i$ with support on $\text{supp}(h_i)$ as $\mathcal{X}_i^r(\widetilde{U}_i)$, we see that the map
\[ \mathcal{X}^r(\widetilde{U}_i)\to \mathcal{X}_i^r(\widetilde{U}_i),\quad Y\mapsto h_i Y \]
is continuous linear.\\ 
Moreover, if $Z\in \mathcal{X}_i^r(\widetilde{U}_i)$, we can obtain an extension $\mathcal{E}_i (Z)\in \mathcal{X}^r(DM)$ of $Z$ by extending it by $0$, and a continuous linear map
\[ \mathcal{E}_i:\mathcal{X}_i^r(\widetilde{U}_i)\to \mathcal{X}^r(DM),\quad Z\mapsto \mathcal{E}_i (Z). \]
For $X\in \mathcal{X}_{str}^r(M)$, we write $\widetilde{X}_{k+1}:=X|_{M^\circ}$. This enables us to define the extension map 
\[ \alpha: \mathcal{X}_{str}^r(M)\to \mathcal{X}^r(DM),\quad X\mapsto \alpha(X):=\sum_{i=1}^{k+1} \mathcal{E}_i(h_i \widetilde{X}_i) \]
which is continuous linear.
\end{proof}

\begin{remark}
Let $M$ be a compact smooth manifiold with boundary and $r\in \N\cup\{\infty\}$. By \cite[Proposition 1.3]{MILAN}, the set $\Diff(M)$ is a embedded submanifold of $C^r(M,DM)$. This allows us to consider the inclusion map restricted onto its image  
\begin{equation}\label{7 Jembe}
    J:\Diff(M)\to J(\Diff(M))\subset C^r(M,DM),\quad \phi\mapsto \phi
\end{equation}
then $J$ is a diffeomorphism. Since for each $g\in  C^r(M,M)$ fixed, the right translation map 
\[ \rho_{C^r}(g): C^r(M,DM)\to C^r(M,DM),\quad \phi\mapsto \phi\circ g\] 
is smooth. For each $g\in  \Diff(M)$ fixed, the right translation map 
\[ \rho(g) :\Diff(M)\to \Diff(M),\quad \phi\mapsto \phi\circ g\]
can be written as 
\[ \rho(g) = J^{-1}\circ \rho_{C^r}(J(g)) \circ J.\]
Hence $\Diff(M)$ is a right half-Lie group.
\end{remark}
\begin{theorem}
Let $M$ be a compact smooth manifold with boundary, $r\in \N\cup\{\infty\}$ and $1\leq p < \infty$. Then the right half-Lie group $\Diff(M)$ is $L^p$-semiregular and the evolution map with continuous values
\[ \Evol_C:L^p([0,1],T_e \Diff(M) ) \to C([0,1],\Diff(M)),\quad [\gamma]\mapsto \Evol_C(\gamma)\]
is continuous.
\end{theorem}
\begin{proof}
Let consider the map $\alpha:\mathcal{X}_{str}^r(M)\to \mathcal{X}^r(DM)$ as Proposition \ref{de alpha}, we define the map
\[ \tilde{\alpha}:=L^p([0,1],\alpha):L^p([0,1],\mathcal{X}_{str}^r(M))\to L^p([0,1],\mathcal{X}^r(DM)),\quad [\gamma]\mapsto [\alpha\circ \gamma];\]
which is linear and continuous. Since $DM$ is a compact smooth manifold without boundary, the right half-Lie group $\text{Diff}^r(DM)$ is $L^p$-semiregular with continuous evolution map denoted by
\[ \Evol_{DM}:L^p([0,1],\mathcal{X}^r(DM))\to AC_{L^p}([0,1],\text{Diff}^r(DM)),\quad [\gamma]\mapsto \Evol_{DM}(\gamma).\]
Let $[\gamma]\in L^p([0,1],\mathcal{X}_{str}^r(M))$, we define the absolutely continuous function ${\xi_\gamma :[0,1]\to \text{Diff}^r(DM)}$ by $\xi_\gamma := \Evol_{DM}(\tilde{\alpha}([\gamma]))$. Then $\xi_\gamma$ is the solution of the equation
\begin{align*}
   \dot{\xi}_\gamma(t)&= \tilde{\alpha}([\gamma])(t).\xi_\gamma(t),\quad t\in [0,1]\\ 
   \xi_\gamma(0)&=e.
\end{align*}
For $[\gamma]$ close to $0$, the proof of Theorem \ref{text-main-4} shows that, for each $p\in M$, the function $x_p:[0,1]\to DM$, given by ${x_p(t):=\xi_\gamma(t)(p)}$ is a solution of the equation
\begin{align*}
    \dot{ x_p}(t)&= \tilde{\alpha}([\gamma])(t)\circ x_p(t),\quad t\in [0,1]\\ 
    x_p(0)&=p,
\end{align*}
and this ODE satisfies local uniqueness of Caratheodory solutions.
Looking at the compact manifold $\partial M$ without boundary and the vector fields $\gamma(t)|_{\partial M}\in \mathcal{X}^r(\partial M)$ we likewise get a solution $y_p:[0,1]\to \partial M$ for each $p\in \partial M$, for the differential equation 
\begin{align*}
    \dot{ y_p}(t)&= \tilde{\alpha}([\gamma])(t)\circ y_p(t),\quad t\in [0,1]\\ 
    y_p(0)&=p.
\end{align*}
Then $y_p$ also solves the initial value problem for $x_p$, whence $x_p=y_p$ by local uniqueness. In consequence $x_p([0,1])\subseteq \partial M$.
This implies that for each $t\in [0,1]$ fixed, we have $\xi_\gamma(t)(M)\subseteq M$, $\xi_\gamma(t)(M')\subseteq M'$ and $\xi_\gamma(t)(\partial M)\subseteq \partial M$. Therefore, we obtain ${\xi_\gamma(t)|_M\in \Diff(M)}$. \\
Consider the smooth embedding $\iota:\text{Diff}^r(DM)\to C^r(DM,DM)$, $\phi\mapsto \phi$. Then the map
\[ \tilde{\iota}:=AC_{L^p}([0,1],\iota): AC_{L^p}([0,1],\text{Diff}^r(DM)) \to AC_{L^p}([0,1],C^r(DM,DM)),\quad \eta \mapsto \iota\circ \eta \]
is smooth. Let $\rho:M\to DM$ be the inclusion map, which is smooth and a diffeomorphism onto its image, then the maps
\[ \rho^*:=C^r(\rho, DM): C^r(DM,DM)\to C^r(M,DM),\quad \phi\mapsto \phi\circ \rho = \phi|_{M} \]
and  
\[ \tilde{\rho}:AC_{L^p}([0,1],C^r(DM,DM))\to AC_{L^p}([0,1],C^r(M,DM)),\quad \eta\mapsto \rho^*\circ \eta \]
are smooth. \\
Consider the restricted inclusion map $J$ as in equation (\ref{7 Jembe}), by Lemma \ref{pre1}, we define the map
\[ \tilde{j}:AC_{L^p}([0,1],J(\Diff(M)))\to AC_{L^p}([0,1],\Diff(M)),\quad \eta \mapsto J^{-1}\circ \eta. \]
The fact that $\xi_\gamma(t)|_M\in \Diff(M)$ for each $t\in [0,1]$, enables us to define the function
\[ \zeta_\gamma:[0,1]\to C^r(M,DM),\quad \zeta_\gamma(t):= (\tilde{j}\circ\tilde{\rho}\circ \tilde{\iota})(\xi_\gamma)(t)\]
which is in $AC_{L^p}([0,1], \Diff(M))$. Moreover, by definition of $\alpha$, we have
\[ \tilde{\alpha}([\gamma])(t)\circ \zeta_\gamma(t) = \gamma(t)\circ \zeta_\gamma(t). \]
Hence $\zeta_\gamma$ verifies the equation
\begin{align*}
   \dot{ \zeta_\gamma}(t)&= \gamma(t).\zeta_\gamma(t), \quad t\in [0,1]\\ 
   \zeta_\gamma(0)&=e.
\end{align*}
Therefore $\Diff(M)$ is $L^p$-semiregular and the evolution map is given by
\[ \Evol : L^p([0,1],\mathcal{X}_{str}^r(M))\to AC_{L^p}([0,1],\Diff(M)),\quad [\gamma]\mapsto \Big(\tilde{j}\circ\tilde{\rho}\circ\tilde{\iota}\circ \Evol_{DM} \circ \tilde{\alpha}\Big) ([\gamma]). \]
We consider the inclusion map
\[ \omega:AC_{L^p}([0,1],C^r(M,DM)) \to C([0,1],C^r(M,DM)),\quad \eta\mapsto \eta \]
which is smooth, and we define the smooth map
\[ \widetilde{\rho}_C:AC_{L^p}([0,1],C^r(DM,DM))\to C([0,1],C^r(M,DM)),\quad \eta\mapsto \omega(\rho^*\circ \eta).\]
We write
\[ \widetilde{j}_C:C([0,1],J(\Diff(M)))\to C([0,1],\Diff(M)),\quad \eta \to J^{-1}\circ \eta \]
then $\widetilde{j}_C$ is continuous. Therefore, the evolution map with continuous values $\Evol_C$ is given by
\[ \Evol_C : L^p([0,1],\mathcal{X}_{str}^r(M))\to C([0,1],\Diff(M)),\quad [\gamma]\mapsto \Big(\widetilde{j}_C\circ\widetilde{\rho}_C\circ\widetilde{\iota}\circ \Evol_{DM} \circ \widetilde{\alpha}\Big) ([\gamma]) \]
and is continuous since is composition of continuous maps.
\end{proof}
\newpage
\noindent
{\bf Acknowledgements}: The author would like to thank Helge Gl\"ockner for his guidance during
the development of this work. The author was partially supported by the FONDECYT Grant \#1241719 and by ANID and DAAD (DAAD/BecasChile 2020, ID:91762237/62190017).

\end{document}